\newtheorem{theorem}{Theorem}[section]
\newtheorem{proposition}[theorem]{Proposition}
\newtheorem{lemma}[theorem]{Lemma}
\newtheorem{definition}[theorem]{Definition}
\newtheorem{corollary}[theorem]{Corollary}
\newtheorem{conjecture}[theorem]{Conjecture}
\newtheorem{remark}{Remark}[section]
\newtheorem{question}{Question}
\numberwithin{equation}{section}
\newcommand{\CC}{\mathbb{C}}
\newcommand{\ZZ}{\mathbb{Z}}
\newcommand{\RR}{\mathbb{R}}
\newcommand{\NN}{\mathbb{N}}
\newcommand{\bbA}{\mathbb{A}}
\newcommand{\bbB}{\mathbb{B}}
\newcommand{\calp}{\mathcal{P}}
\newcommand{\calt}{\mathcal{T}}
\newcommand{\cale}{\mathcal{E}}
\newcommand{\calm}{\mathcal{M}}
\newcommand{\call}{\mathcal{L}}
\newcommand{\one}{\mathbf{1}}
\newcommand{\Div}{{\mathrm{Div}}}
\newcommand{\Span}{{\mathrm{Span}}}
\newcommand{\Bispan}{{\mathrm{Bispan}}}
\subjclass[2020]{Primary 05B45, 11B75, 20K01. Secondary 11C08, 43A75, 51D20, 52C22.}
\title{Combinatorial and harmonic-analytic methods for integer tilings} 
\author{Izabella {\L}aba and Itay Londner}
\date{\today}
\begin{document}

\begin{abstract}
A finite set of integers $A$ tiles the integers by translations if $\mathbb{Z}$ can be covered by pairwise disjoint translated copies of $A$. Restricting attention to one tiling period, we have $A\oplus B=\mathbb{Z}_M$ for some $M\in\mathbb{N}$ and $B\subset\mathbb{Z}$. This can also be stated in terms of cyclotomic divisibility of the mask polynomials $A(X)$ and $B(X)$ associated with $A$ and $B$.

In this article, we introduce a new approach to a systematic study of such tilings. Our main new tools are the box product, multiscale cuboids, and saturating sets, developed through a combination of harmonic-analytic and combinatorial methods. We provide new criteria for tiling and cyclotomic divisibility in terms of these concepts. As an application, we can determine whether a set $A$ containing certain configurations can tile a cyclic group $\mathbb{Z}_M$, or recover a tiling set based on partial information about it. We also develop tiling reductions where a given tiling can be replaced by one or more tilings with a simpler structure.
The tools introduced here are crucial in our proof in \cite{LaLo2} that all tilings of period $(pqr)^2$, where $p,q,r$ are distinct odd primes, satisfy a tiling condition proposed by Coven and Meyerowitz 
\cite{CM}. 
\end{abstract}

\maketitle

\tableofcontents



\section{Introduction}


A set $A\subset\ZZ$ {\em tiles the integers by translations} if $\ZZ$ can be covered by pairwise disjoint translates of $A$. Equivalently, there exists a set $T\subset\ZZ$ (the set of translations) such that every integer $n\in\ZZ$ can be represented uniquely as $n=a+t$ with $a\in A$ and $t\in T$. Throughout this
article, we assume that $A$ is finite and nonempty, and call it a {\em finite tile} if it tiles the integers.
Newman \cite{New} proved that any tiling of $\ZZ$ by a finite set $A$ must be periodic, i.e.
$T=B\oplus M\ZZ$ for some finite set $B\subset \ZZ$ such that $|A|\,|B|=M$.  Equivalently,
$A\oplus B$ is a factorization of the cyclic group $\ZZ_M$, with $B$ as the {\em tiling complement}.

We are interested in investigating the properties of finite tiles. While this is a natural and attractive question, surprisingly little has been known on this subject. 

Newman's proof provides a bound on the tiling period, $M\leq 2^{\max(A)-\min(A)}$. Thus, given a finite set $A\subset\ZZ$, the question of whether $A$ is a tile is at least in principle computationally decidable.
However, Newman's bound is exponential in diameter, and can therefore be very large even if $A$ has only a few elements. A more effective bound was proved recently by Greenfeld and Tao \cite{GT}.

Further important reductions and observations were made by 
Sands \cite{Sands}, Tijdeman \cite{Tij}, and Coven and Meyerowitz \cite{CM}. Sands's theorem on replacement of
factors \cite{Sands} states that if $A\oplus B=\ZZ_M$ and $M$ has at most two distinct prime divisors,
then at least one of $A$ and $B$ must be contained in a proper subgroup of $\ZZ_M$. The proof of this is based on a characterization of tiling pairs, also due to Sands, which we state here as Theorem \ref{thm-sands}.
Tijdeman \cite{Tij} proved that if a finite set $A$ tiles the integers, and if $r\in\NN$ is relatively prime to $|A|$, then $rA:=\{ra:a\in A\}$ also tiles $\ZZ$ with the same tiling complement. 
Coven and Meyerowitz \cite[Lemma 2.3]{CM} used this to prove that if a finite set $A$ tiles the integers, then it also tiles
$\ZZ_M$ for some $M$ which has the same prime factors as $|A|$.

For the last two decades, the state-of-the-art work on the subject was due to Coven and Meyerowitz \cite{CM}. In order to describe their main result, we need to introduce some notation, which we will also use throughout this article.
By translational invariance, we may assume that
$A,B\subset\{0,1,\dots\}$ and that $0\in A\cap B$. The {\it characteristic 
polynomials} (also known as {\it mask polynomials}) of $A$ and $B$ are
$$
A(X)=\sum_{a\in A}X^a,\ B(X)=\sum_{b\in B}X^b .
$$
Then $A\oplus B=\ZZ_M$ is equivalent to
\begin{equation}\label{poly-e1}
 A(X)B(X)=1+X+\dots+X^{M-1}\ \mod (X^M-1).
\end{equation}

Let $\Phi_s(X)$ be the $s$-th cyclotomic
polynomial, i.e., the unique monic, irreducible polynomial whose roots are the
primitive $s$-th roots of unity. Alternatively, $\Phi_s$ can be defined inductively via the identity
\begin{equation}\label{poly-e0}
X^n-1=\prod_{s|n}\Phi_s(X).
\end{equation}
In particular, (\ref{poly-e1}) is equivalent to
\begin{equation}\label{poly-e2}
  |A||B|=M\hbox{ and }\Phi_s(X)\, |\, A(X)B(X)\hbox{ for all }s|M,\ s\neq 1.
\end{equation}
Since $\Phi_s$ are irreducible, each $\Phi_s(X)$ with $s|M$ must divide at least one of $A(X)$ and $B(X)$.

Coven and Meyerowitz \cite{CM} proved the following theorem. 

\begin{theorem}\label{CM-thm} \cite{CM}
Let $S_A$ be the set of prime powers
$p^\alpha$ such that $\Phi_{p^\alpha}(X)$ divides $A(X)$.  Consider the following conditions.

\smallskip
{\it (T1) $A(1)=\prod_{s\in S_A}\Phi_s(1)$,}

\smallskip
{\it (T2) if $s_1,\dots,s_k\in S_A$ are powers of different
primes, then $\Phi_{s_1\dots s_k}(X)$ divides $A(X)$.}
\medskip

Then:

\begin{itemize}

\item if $A$ satisfies (T1), (T2), then $A$ tiles $\ZZ$;

\item  if $A$ tiles $\ZZ$ then (T1) holds;

\item if $A$ tiles $\ZZ$ and $|A|$ has at most two distinct prime factors,
then (T2) holds.
\end{itemize}

\end{theorem}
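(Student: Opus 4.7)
The statement bundles three assertions, which I would treat in order of increasing difficulty. For the \emph{necessity of (T1)}: since $A$ tiles $\mathbb{Z}$ with some period $M$, and also tiles $\mathbb{Z}_{kM}$ for every $k\in\mathbb{N}$ with complement $B\oplus\{0,M,\dots,(k-1)M\}$, I may enlarge $M$ so that every element of the finite set $S_A$ divides $M$ (finiteness of $S_A$ is clear since $A(X)$ has only finitely many irreducible factors). Then by (\ref{poly-e2}), each $\Phi_s$ with $s\mid M$, $s>1$ divides $A(X)B(X)$, and irreducibility forces each such $\Phi_s$ to divide exactly one of the two factors, defining an analogous set $S_B$. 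Evaluating at $X=1$ and using $\Phi_{p^\alpha}(1)=p$ while $\Phi_s(1)=1$ for non--prime-power $s$, I obtain $\prod_{p^\alpha\in S_A}p\mid|A|$ and $\prod_{p^\alpha\in S_B}p\mid|B|$. Since $S_A\sqcup S_B$ covers every prime-power divisor of $M$, the product of the two left-hand sides equals $M=|A||B|$, forcing both divisibilities to be equalities; this is (T1).

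For the \emph{necessity of (T2) when $|A|$ has at most two prime factors}, I would induct on $|A|+|B|$. By Coven--Meyerowitz's Lemma 2.3, $A$ tiles some $\mathbb{Z}_M$ with the same prime factors as $|A|$, so $M=p^aq^b$. When only one prime appears, (T2) is vacuous. Otherwise invoke Sands's Theorem \ref{thm-sands}: one of $A,B$ lies in a proper subgroup of $\mathbb{Z}_M$. If $A\subset d\mathbb{Z}_M$ with $d>1$, I rescale to obtain a strictly smaller tiling $A'\oplus B'=\mathbb{Z}_{M/d}$ with $A(X)=A'(X^d)$; the correspondence $\Phi_s\mid A(X)\iff\Phi_{s/\gcd(s,d)}\mid A'(X)$ then lifts (T2) from $A'$ to $A$, noting that the edge case $s\mid d$ cannot occur since $\Phi_1\nmid A'$. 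The symmetric case $B\subset d\mathbb{Z}_M$ is handled by a parallel argument, in which each fiber of the projection $A\to\mathbb{Z}_M/d\mathbb{Z}_M$ yields a smaller tile whose inductive (T2) property combines to give (T2) for $A$. Beyond two primes, Sands's theorem fails, which is precisely why the method does not extend.

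For \emph{sufficiency of (T1)\,+\,(T2)} (with no restriction on the number of primes), the plan is to exhibit an explicit tiling complement. For each prime $p\mid|A|$ let $n_p=\max\{\alpha:p^\alpha\in S_A\}$, put $N=\prod_p p^{n_p}$, and let $S_B$ consist of those prime powers $p^\beta\mid N$ with $p^\beta\notin S_A$. Construct $B$ as a direct sum of arithmetic progressions, one of length $p$ per $p^\beta\in S_B$, with spacings chosen so that $B(X)$ is divisible exactly by the cyclotomics $\Phi_{p^\beta}$ for $p^\beta\in S_B$ and $|B|=\prod_{p^\beta\in S_B}p$. By (T1) then $|A||B|=N$. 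To verify $A\oplus B=\mathbb{Z}_N$ it remains to check $\Phi_s\mid A(X)B(X)$ for every $s\mid N$, $s>1$: prime-power $s$ is handled by the defining partition $S_A\sqcup S_B$, while mixed $s=p_1^{\beta_1}\cdots p_k^{\beta_k}$ requires a case analysis according to which $p_i^{\beta_i}$ lie in $S_A$ versus $S_B$. The cases where all the $p_i^{\beta_i}$ lie in $S_A$ are delivered directly by (T2); the cases where all lie in $S_B$ follow from the analogous property of the standard construction; the genuinely mixed cases must be handled by exploiting the arithmetic-progression structure of $B$ together with (T2) applied to strict subsets of $S_A$. This final mixed-cyclotomic verification is the step I expect to be the main obstacle, since it is precisely where the joint structure of $A$ and $B$ must be exploited rather than each factor separately.
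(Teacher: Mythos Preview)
Your overall strategy coincides with that of Coven--Meyerowitz, and the paper in fact recapitulates each ingredient: the (T1) argument is given in Section~\ref{sec-cyclo}, the standard complement is Definition~\ref{standard} together with Lemma~\ref{cyclo-divisors} and Proposition~\ref{replacement}, and the inductive step for (T2) is Theorem~\ref{subgroup-reduction}. Two points deserve correction.

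For \emph{sufficiency}, you have misidentified the obstacle. The standard complement $B$ you construct satisfies a property strictly stronger than (T2): by Lemma~\ref{cyclo-divisors}, each building block $\Phi_p(X^{(N/p^\beta)})$ in $B(X)$ is divisible by $\Phi_s$ for \emph{every} $s$ with $p^\beta\parallel s$, not merely for $s=p^\beta$. Consequently $\Phi_s\mid B(X)$ as soon as \emph{any single} prime-power component $p_i^{\beta_i}$ of $s$ lies in $S_B$. There is therefore no ``genuinely mixed'' third case: for each $s\mid N$ with $s>1$, either all prime-power components of $s$ lie in $S_A$ (and $\Phi_s\mid A$ by (T2)), or at least one lies in $S_B$ (and $\Phi_s\mid B$ by the structural property just stated). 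The verification is a clean dichotomy, and the work you anticipate in combining (T2) on subsets of $S_A$ with the arithmetic structure of $B$ is unnecessary.

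For the \emph{(T2) induction}, the result you invoke as Theorem~\ref{thm-sands} is the wrong theorem of Sands: in this paper that label refers to divisor exclusion, whereas what you need is Sands's subgroup theorem (described in the introduction but not numbered). Your rescaling correspondence $\Phi_s\mid A\Leftrightarrow\Phi_{s/\gcd(s,d)}\mid A'$ is correct for prime $d$ (which is all the induction requires), and your treatment of the case $B\subset d\ZZ_M$ via residue classes of $A$ is the right idea; the paper carries out the details of combining the per-class (T2) conclusions into (T2) for $A$ in the proof of Theorem~\ref{subgroup-reduction}, and that combination does require separate handling of products that involve the top $p_i$-power versus those that do not.
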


The condition (T1) is, essentially, a counting condition, and is relatively easy to prove. For sets $A\subset\ZZ$ such that $|A|$ is a prime power, (T1) is a necessary and sufficient condition for $A$ to be a tile \cite{New}. (In this case, (T2) is vacuous.)

The second condition (T2) is much deeper.  Coven and Meyerowitz \cite{CM} proved that if (T2) holds, then $A\oplus B^\flat =\ZZ_M$ is a tiling, where $M={\rm lcm}(S_A)$ and $B^\flat$ is an explicitly constructed and highly structured ``standard" tiling complement depending only on the prime power cyclotomic divisors of $A(X)$. We prove in Section \ref{standards} that having a tiling complement of this type is in fact equivalent to (T2). While this equivalence was not stated explicitly in \cite{CM}, it follows readily from the methods developed there.

The Coven-Meyerowitz proof of (T2) for all finite tiles with 2 distinct prime factors relies on the aforementioned structure and replacement theorems of Sands \cite{Sands} and Tijdeman \cite{Tij}. 
In \cite[Lemma 2.3]{CM}, the authors deduce from Tijdeman's theorem that if $A$ tiles the integers and $|A|$ has at most two distinct prime factors, then $A$ admits a tiling $A\oplus B=\ZZ_M$, where
$M$ has at most two distinct prime factors. By Sands's theorem,
one of $A$ and $B$ must then be contained in a proper subgroup of $\ZZ_M$.  Coven and
Meyerowitz use this to set up an inductive argument. 

A closer analysis of the Coven-Meyerowitz argument yields the same result in the case when $M=p_1^{n_1} \dots p_K^{n_K}$, where $p_1,\dots,p_K$ are distinct primes, $n_1,\dots,n_K\in\NN$ are arbitrary, and at most two of $p_1,\dots,p_K$ divide both $|A|$ and $|B|$. Essentially, any such case can be reduced to the two-prime case via Tijdeman's theorem and Lemma 2.3 of \cite{CM}, whereupon Theorem \ref{CM-thm} may be applied. We provide the details in Corollary \ref{almostsquarefree}. (See also \cite{Tao-blog}, \cite{dutkay-kraus}, \cite{shi}.)

The goal of the present article is to develop methods that can be used in the study of tilings $A\oplus B=\ZZ_M$, where $M$ is permitted to have three or more prime factors dividing both $|A|$ and $|B|$. Sands's factorization theorem does not hold in this case, with counterexamples in \cite{Sz}, \cite{LS}.
For the same reason, the Coven-Meyerowitz proof does not extend to such tilings. We emphasize that this is not just a technical issue. Tilings with three or more distinct prime factors dividing both $|A|$ and $|B|$
are genuinely different, and any comprehensive analysis of them must account for new phenomena that have no counterparts for two prime factors, such as Szab\'o's examples \cite{Sz}.

The simplest tilings that cannot be reduced to the 2-prime case using the methods of \cite{CM} are of the form $A\oplus B=\ZZ_M$, where $|A|=|B|=p_1p_2p_3$ and $p_1,p_2,p_3$ are distinct primes. 
In the follow-up paper \cite{LaLo2}, we use the methods developed here to resolve this case when $p_1,p_2,p_3$ are odd.

\begin{theorem}\label{T2-3primes-thm} \cite{LaLo2}
Let $M=p_i^2p_j^2p_k^2$, where $p_i,p_j,p_k$ are distinct odd primes. 
Assume that $A\oplus B=\ZZ_M$, with $|A|=|B|=p_ip_jp_k$. Then both $A$ and $B$ satisfy (T2).
\end{theorem}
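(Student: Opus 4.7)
The plan is to combine the combinatorial machinery developed in the body of this paper---in particular the $\Span$, $\Bispan$, and $\Div$ constructions and their associated cuboid/saturation criteria---with a case analysis on the exponent pattern of the prime-power cyclotomic divisors of $A$ and $B$, ultimately reducing the genuine three-prime problem either to the two-prime case handled by \cite{CM} or to a contradiction.

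\textbf{Setup via counting.} Since (T1) holds for any integer tile \cite{CM} and $A(1) = B(1) = p_i p_j p_k$, and since $\Phi_{p^{\alpha}}(1)=p$ for every prime power $p^\alpha$, the sets
\[
S_A = \{p_i^{\alpha_i}, p_j^{\alpha_j}, p_k^{\alpha_k}\}, \qquad
S_B = \{p_i^{\beta_i}, p_j^{\beta_j}, p_k^{\beta_k}\}
\]
each contain exactly one prime power of each $p_\ell$, with $\alpha_\ell,\beta_\ell \in \{1,2\}$. Because both $\Phi_{p_\ell}$ and $\Phi_{p_\ell^2}$ must divide $A(X)B(X)$ but neither $S_A$ nor $S_B$ can contain both, we must have $\{\alpha_\ell,\beta_\ell\}=\{1,2\}$ for each $\ell$. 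In this language, (T2) for $A$ is the assertion that the four compound cyclotomics $\Phi_{p_\ell^{\alpha_\ell}p_m^{\alpha_m}}$ (for $\ell\neq m$) and $\Phi_{p_i^{\alpha_i}p_j^{\alpha_j}p_k^{\alpha_k}}$ divide $A(X)$ rather than $B(X)$.

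\textbf{Strategy for the main step.} Every compound cyclotomic $\Phi_s$ with $s\mid M$ and $s$ not a prime power divides exactly one of $A,B$ in $\QQ[X]$, yielding an assignment of the $20$ compound divisors to $A$ or $B$. The plan is to show that any assignment that places an $A$-type cyclotomic (one whose exponent vector matches $(\alpha_i,\alpha_j,\alpha_k)$) inside $B$ forces a rigid structural feature on $B$---typically the presence of a full fiber along a subgroup of $\ZZ_M$ of prime index, detected via the cuboid/saturating-set invariants of this paper---which in turn forces a Sands-type factorization of $A$ on a suitable quotient. Iterating this reduction peels off one prime at a time; the argument terminates either by producing the required divisibility $\Phi_s\mid A$, or by reducing to a tiling of a cyclic group whose order involves at most two primes dividing both $|A|$ and $|B|$, where Corollary \ref{almostsquarefree} supplies (T2) directly.

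\textbf{Main obstacle.} The essential difficulty is the genuinely three-dimensional regime in which no fiber structure is visible and no single prime admits a Sands-style splitting---precisely where Szab\'o's counterexamples \cite{Sz} live. Here the proof must simultaneously track all seven compound scales $s = p_i^{a_i}p_j^{a_j}p_k^{a_k}$ with $\max a_\ell\le 2$ and at least two positive $a_\ell$, using the $\Bispan$/cuboid bookkeeping developed above to extract a contradiction from an assumed (T2) failure. The hypothesis that the primes are odd is expected to enter precisely at this stage, through a parity or linear-algebra argument over $\FF_{p_\ell}$ that degenerates when $p_\ell=2$, paralleling the obstruction in Szab\'o's constructions. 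Once (T2) is established for $A$, the same argument with $A$ and $B$ interchanged (permissible since the hypotheses are symmetric in $A$ and $B$) yields (T2) for $B$ and completes the proof.
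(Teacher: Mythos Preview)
Your proposal contains a concrete error and, more fundamentally, is a plan rather than a proof.

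The error: you assert that ``every compound cyclotomic $\Phi_s$ with $s\mid M$ and $s$ not a prime power divides exactly one of $A,B$,'' and you build your strategy on the resulting assignment of compound divisors to $A$ or $B$. This is false, and the paper says so explicitly in Section~\ref{sec-cyclo}: for $s$ with two or more prime factors, $\Phi_s$ may divide both $A(X)$ and $B(X)$. So there is no well-defined assignment to case-split on, and the entire ``Strategy for the main step'' rests on a nonexistent dichotomy.

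The deeper gap: even setting that aside, your ``Main obstacle'' paragraph is an honest admission that the central case---where neither set admits a subgroup or fiber reduction---is not handled. You write that ``the proof must simultaneously track all seven compound scales \dots\ using the $\Bispan$/cuboid bookkeeping \dots\ to extract a contradiction,'' but you do not say what contradiction, or how the bookkeeping yields it. This is exactly the part where the Coven--Meyerowitz and Sands machinery fails and where genuine new work is required. The paper does not prove Theorem~\ref{T2-3primes-thm} here; it defers the proof to \cite{LaLo2}. What the paper does indicate (see Sections~\ref{sec-satsets}, \ref{sec-fibers}, and especially the worked examples in Section~\ref{shiftexamples}) is that the actual argument proceeds not by peeling off primes, but by locating points $x\notin A$ whose saturating sets $A_x$ are confined to a single line, invoking Lemma~\ref{1dim_sat} and Corollary~\ref{1dim_sat-cor} to extract a cofibered structure, and then using fiber shifts (Lemma~\ref{fibershift}) to pass to a T2-equivalent tiling, ultimately reaching $A^\flat\oplus B=\ZZ_M$ and applying Corollary~\ref{get-standard}. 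Your proposal does not engage with this mechanism at all.
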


While our complete proof of Theorem \ref{T2-3primes-thm} works only under the assumptions indicated, many of our tools, methods, and intermediate results apply to general tilings $A\oplus B=\ZZ_M$, raising the possibility of further extensions and improvements. We therefore chose to present them here in more generality, deferring the actual proof of Theorem \ref{T2-3primes-thm} to the paper \cite{LaLo2}, which is restricted to the 3-prime setting.

We begin with the notation and preliminaries in Section \ref{sec-prelim}. We identify $\ZZ_M=\ZZ_{p_1^{n_1} \dots p_K^{n_K}}$ with $\ZZ_{p_1^{n_1}}\oplus \dots \oplus\ZZ_{p_K^{n_K}}$, and use the induced coordinate system to identify the given tiling with a tiling of a multidimensional lattice. This allows a geometric viewpoint whereby we can describe the tiling in terms of objects such as lines, planes, or fibers (arithmetic progressions of maximal length on certain scales). We emphasize, however, that the problem under consideration is much more specific that the study of tilings of multidimensional lattices in general. It is important in our work that the different coordinate directions correspond to distinct primes.

In Section \ref{standards}, we present an alternative formulation of (T2) in terms of {\em standard tiling complements}. Roughly speaking, if $A\oplus B=\ZZ_M$ is a tiling, then $B$ satisfies (T2) if and only if its tiling complement $A$ can be replaced by a highly structured ``standard set" $A^\flat$ with the same prime power cyclotomic divisors as $A$. Such standard sets were already used in \cite{CM} to prove that (T1) and (T2) imply tiling. Here, we state the formal implication in the other direction. In this formulation, the condition (T2) can be viewed as a distant cousin of questions on replacement of factors in factorization of finite abelian groups (see \cite{Szabo-book} for an overview).

In Section \ref{sec-box-product}, we introduce one of our main tools, the {\em box product}. The idea comes from the unpublished paper \cite{GLW}, and our main harmonic-analytic identity, Theorem \ref{main-identity}, is in fact a reprise of \cite[Theorem 1]{GLW} with relatively minor modifications. We are, however, able to use it much more effectively. (We caution the reader that, while Theorem 1 in \cite{GLW} is correct, the proof of the main tiling result in \cite{GLW} contains an error that cannot be readily fixed with the methods of that paper.)

Our goal is to be able to start with an arbitrary tiling $A\oplus B=\ZZ_M$, and prove that either at least one of the sets $A$ or $B$ can be replaced directly by the corresponding standard tiling complement (which proves (T2) as indicated above), or else we can pass to tilings with a smaller period $N|M$ and apply an inductive argument. The machinery to do this is developed in Sections \ref{gen-cuboids}--\ref{sec-fibers}, and includes the following main ingredients.

Cuboids (Section \ref{gen-cuboids}) and fibering (Section \ref{sec-fibers}) are our main tools in determining cyclotomic divisibility and proving structural properties. Cuboids have been used previously in the literature in the context of vanishing sums of roots of unity \cite{Steinberger} and Fuglede's spectral set conjecture \cite{KMSV}. We often have to use both cuboids and fibering at several scales at the same time. In particular, we introduce ``multiscale" cuboids that correspond to divisibility by combinations of several cyclotomic polynomials.

In Section \ref{sec-reductions}, we discuss two reductions that allow us to pass to tilings with a smaller period, with the (T2) property preserved under the decomposition. We first review the subgroup reduction from \cite{CM}. Next, we introduce a ``slab reduction", which we believe to be new, and which covers many cases of interest that are not covered by the subgroup reduction. We also develop a criterion for this reduction to apply. A concrete example of this is provided in Corollary \ref{Afibered}.

While the subgroup reduction is sufficient to prove Theorem \ref{CM-thm}, tilings with 3 or more distinct prime factors include cases where such inductive arguments do not appear to be easily applicable. One well-known obstruction to an inductive approach is provided by Szab\'o-type examples \cite{Sz}. However, Szab\'o's examples are known to satisfy (T2). This was observed already by Coven and Meyerowitz \cite{CM}; see also \cite{dutkay-kraus} for an explicit analysis of a class of examples based on Szab\'o's idea. 

We do not know whether Szab\'o-type constructions are the only obstacle to an inductive proof of (T2) for all finite tiles. In \cite{LaLo2}, we prove that this is indeed true for classes of tilings that are broad enough to include all tilings of $\ZZ_M$ with $M=(p_1p_2p_3)^2$. 
The key new concept turns out to be {\em saturating sets} -- subsets of $A$ and $B$ that saturate appropriately chosen box products (Section \ref{sec-satsets}). Informally, if a tile $A$ contains geometric configurations that indicate lack of structure on a certain scale, we are able to use it to our advantage and locate highly structured configurations elsewhere in both $A$ and $B$. In particular, the less structure we have in one of the tiles, the more structured the other one is expected to be. In the $M=(p_1p_2p_3)^2$ case, we use this to prove that all tilings with ``unfibered grids" (see \cite{LaLo2} for the definition) must in fact come from Szab\'o-type constructions. With this established, we can prove (T2) for such tilings by reduction to standard tiling complements. 
The full argument is carried out in \cite{LaLo2}, but we also provide examples of this procedure here in Section \ref{shiftexamples}. 

In addition to applications to proving structural conditions such as (T2), 
we are also able to use saturating sets to identify sets $A\subset\ZZ_M$ that do {\em not} tile $\ZZ_M$ based on the presence of certain configurations. Results of this type include Lemma \ref{smallcube} and Proposition \ref{prop-twodirections}. 

In Section \ref{conjectures-section}, we discuss open questions and possible directions of study arising from our research so far.

Since the work of Coven and Meyerowitz, there has been essentially no progress on proving (T2), except for a few special cases of limited importance (such as \cite{KL}) and cases covered by Corollary \ref{almostsquarefree} (\cite{Tao-blog}, \cite{dutkay-kraus}, \cite{shi}). However, there have been significant recent developments on other questions related to tiling and cyclotomic divisibility. Notably,
Bhattacharya \cite{Bh} has established the periodic tiling conjecture in $\ZZ^2$, with a quantitative version due to Greenfeld and Tao \cite{GT}. In a continuous setting, there has been recent work on tilings of the real line by a function \cite{KoLev}.

There is an important connection between the Coven-Meyerowitz tiling conditions and Fuglede's spectral set conjecture \cite{Fug}. The conjecture, dating back to the 1970s, states that a set $\Omega\subset \RR^n$ of positive $n$-dimensional Lebesgue measure tiles $\RR^n$ by translations if and only if the space $L^2(\Omega)$ admits an orthogonal basis of exponential functions. A set with the latter property is called a {\em spectral set}. While the question originated in functional analysis, it has intriguing connections to many other areas of mathematics, from convex geometry to wavelets, oscillatory integral estimates, and number theory.
The conjecture is now known to be false in dimensions 3 and higher \cite{tao-fuglede}, \cite{KM}, \cite{KM2}, \cite{FMM}, \cite{matolcsi}, \cite{FR}. Nonetheless, many
important cases remain open and continue to attract attention. Iosevich, Katz and Tao \cite{IKT} proved in 2003 that Fuglede's conjecture holds for convex sets in $\RR^2$; an analogous result in higher dimensions was proved only recently, by Greenfeld and Lev \cite{GL} for $n=3$, and by Lev and Matolcsi \cite{LM} for general $n$. There has also been extensive work on the finite abelian group analogue of the conjecture \cite{IMP}, \cite{MK}, \cite{FMV}, \cite{KMSV}, \cite{KMSV2}, \cite{KS}, \cite{M}, \cite{shi}, \cite{shi2}, \cite{somlai}, \cite{FKS}, \cite{zhang}.

Combined with a sequence of results in \cite{LW1}, \cite{LW2}, \cite{L},
proving (T2) for all finite integer tiles would resolve the ``tiling implies spectrum" part of Fuglede's spectral set conjecture for all compact tiles of the real line in dimension 1. Additionally, Dutkay and Lai \cite{dutkay-lai} proved that if a similar property could be established for spectral sets, then this would also resolve the converse part of the conjecture for compact sets in $\RR$. While proving (T2) for a more narrow class of integer tiles does not have that implication, it still establishes one direction of Fuglede's conjecture for that class of tiles in the finite group setting, as well as for sets $E=\bigcup_{a\in A} [a,a+1] \subset\RR$, where $A\subset\ZZ$ is an integer tile in the permitted class \cite{L}.


\section{Notation and preliminaries}\label{sec-prelim}


\subsection{Multisets and mask polynomials}

Let $M\geq 2$ be a fixed integer. Usually, we will work in either $\ZZ_M$ or in $\ZZ_N$ for some $N|M$. 
In the context of the tiling problem, we reserve $M$
for the tiling period and $N$ for its divisors. We also reserve $K$ for the number of the distinct
prime divisors of $M$, and use $p_1,\dots,p_K$ to denote those divisors, so that
$$M=\prod_{i=1}^K p_i^{n_i},$$
where $p_1,\dots,p_K$ are distinct primes and $n_1,\dots,n_K\in\NN$. We fix this notation and use it throughout the rest of the article. For a prime $p$, an integer $m$, and a nonnegative integer $\alpha$, we will say that $p^\alpha\parallel m$ if $p^\alpha|m$ but $p^{\alpha+1}\nmid m$.

We use $A(X)$, $B(X)$, etc. to denote polynomials modulo $X^M-1$ with integer coefficients. 
If $A(X)$ is such a polynomial, we define its weight function $w_A:\ZZ_M\to \ZZ$ so that $w_A(a)$ is the
coefficient of $X^a$ in $A(X)$. Thus $A(X)=\sum_{a\in\ZZ_M} w_A(a) X^a$. If $A$ has 0-1 coefficients, then
$w_A$ is the characteristic function of a set $A\subset \ZZ_M$. However, we will also consider polynomials with integer coefficients not necessarily equal to 0 or 1. In that case, $A(X)$ will correspond to a weighted multiset in $\ZZ_M$, 
which we will also denote by $A$,
with weights $w_A(a)$ assigned to each $a\in\ZZ_M$. We will use $\calm(\ZZ_M)$ to denote the 
family of all such weighted multisets in $\ZZ_M$, and reserve the notation $A\subset \ZZ_M$ for sets (with 0-1 weights). 
If $A\in \calm(\ZZ_M)$, the polynomial $A(X)$ is sometimes called the {\it mask polynomial} of $A$.
It will usually be clear from the context whether $A$ refers to the weighted multiset or the corresponding polynomial; whenever there is any possibility of confusion, we will use $A$ for the multiset and $A(X)$ for the
polynomial. 

If $N|M$, then any $A\in \calm(\ZZ_M)$ induces a weighted multiset $A$ mod $N$ in $\ZZ_N$, with the corresponding mask polynomial $A(X)$ mod $(X^N-1)$, and induced weights 
\begin{equation}\label{induced-weights}
w_A^N(x)= \sum_{x'\in\ZZ_M: x'\equiv x\,{\rm mod}\, N} w_A(x'),\ \ x\in\ZZ_N.
\end{equation}
 For brevity, we will continue to write $A$ and $A(X)$ for $A$ mod $N$ and $A(X)$ mod $(X^N-1)$, respectively, while working in $\ZZ_N$.

 If $A,B\in\calm(\ZZ_M)$, we will use $A+B$ to indicate the weighted multiset corresponding to the sum of mask polynomials, or, equivalently, the sum of weight functions: 
 $$
 w_{A+B}(x)=w_A(x)+w_B(x),\ \ (A+B)(X)=A(X)+B(X).
 $$
 We will use the convolution notation $A*B$ to denote the weighted sumset of $A$ and $B$, so that $(A*B)(X)=A(X)B(X)$
 and
 $$w_{A*B}(x)=(w_A*w_B)(x)=\sum_{y\in\ZZ_M} w_A(x-y)w_B(y).$$
 If one of the sets is a singleton, say $A=\{x\}$, we will simplify the notation and write $x*B=\{x\}*B$.  
 The direct sum notation $A\oplus B$ is reserved for tilings, i.e., $A\oplus B=\ZZ_M$ means that $A,B\subset\ZZ_M$ are both sets and $A(X)B(X)=\frac{X^M-1}{X-1}$ mod $(X^M-1)$. 
 
 Since we will not need to use derivatives of polynomials in this article, we will use notation such as $A'$, $A''$, etc., to denote multisets and polynomials that need not have anything to do with the derivatives $\frac{d}{dX}A(X)$,  $\frac{d^2}{dX^2}A(X)$, and so on.


\subsection{Array coordinates}
\label{sec-array}


Suppose that $M=\prod_{i=1}^K p_i^{n_i}$,
where $p_1,\dots,p_K$ are distinct primes and $n_i\in\NN$. 
By the Chinese Remainder Theorem, we have
$$
\ZZ_M=\bigoplus_{i=1}^K \ZZ_{p_i^{n_i}}\, ,
$$
which we represent geometrically as a $K$-dimensional lattice. The tiling $A\oplus B=\ZZ_M$ can then be interpreted as a tiling of that lattice. 

It will be useful to have an explicit coordinate system on $\ZZ_M$. We fix one as follows.
Let $M_i = M/p_i^{n_i} = \prod_{j\neq i} p_j^{n_j}$, then each $x\in \ZZ_M$ can be written uniquely as
$$
x=\sum_{i=1}^K \pi_i(x) M_i,\ \ \pi_i(x)\in \ZZ_{p_i^{n_i}}.
$$
The mapping $x\to (\pi_1(x),\dots,\pi_k(x))$ identifies $x$ with an element of $\ZZ_{p_1^{n_1}}\times\dots 
\times \ZZ_{p_K^{n_K}}$.
We will refer to the $K$-tuple $(\pi_1(x),\dots,\pi_K(x))$ as the {\em $M$-array coordinates} of $x$.

We state a few easy properties for future reference.

\begin{lemma}\label{coord-properties}
(i) $x\equiv \pi_i(x)M_i \mod p_i^{n_i}$,

\smallskip
(ii) $(x,M)=\prod_{i=1}^K p_i^{\gamma_i}$ if and only if $p_i^{\gamma_i} \parallel \pi_i(x)$ for each $i=1,\dots,K$,

\smallskip
(iii) in particular, $x =0$ in $\ZZ_M$ if and only if $\pi_i(x)=0$ for each $i=1,\dots,K$,

\smallskip
(iv) if $x=\sum \pi_i(x)M_i$, $y=\sum \pi_i(y)M_i$, and $x+y = z =\sum \pi_i(z)M_i$ are the respective coordinate 
representations, then $\pi_i(z)\equiv \pi_i(x)+\pi_i(y) \mod p_i^{n_i}$  for each $i=1,\dots,k$.

\end{lemma}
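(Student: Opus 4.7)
The plan is to derive all four statements from a single key divisibility observation: for $j\neq i$, the factor $M_j = M/p_j^{n_j}$ omits only the prime power $p_j^{n_j}$ and therefore still contains $p_i^{n_i}$, so $p_i^{n_i}\mid M_j$, while $(M_i, p_i^{n_i})=1$. Combined with the Chinese Remainder Theorem, this collapses the analysis of the $i$-th coordinate onto a single component. For (i), I would reduce the defining expansion $x=\sum_j \pi_j(x)M_j$ modulo $p_i^{n_i}$: every term with $j\neq i$ vanishes by the divisibility observation, leaving $x\equiv \pi_i(x)M_i \pmod{p_i^{n_i}}$.

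For (ii), I would combine (i) with $(M_i, p_i^{n_i})=1$ to obtain $(x, p_i^{n_i}) = (\pi_i(x)M_i, p_i^{n_i}) = (\pi_i(x), p_i^{n_i})$, and the last quantity equals $p_i^{\gamma_i}$ exactly when $p_i^{\gamma_i}\parallel \pi_i(x)$ in $\ZZ_{p_i^{n_i}}$. Since the moduli $p_1^{n_1}, \dots, p_K^{n_K}$ are pairwise coprime, the CRT identity $(x, M) = \prod_i (x, p_i^{n_i})$ then upgrades this to the claimed equivalence. Statement (iii) is immediate either as the special case $\gamma_i = n_i$ for every $i$ of (ii), or more directly by invoking the uniqueness of the array coordinate representation applied to $0 = \sum_i 0\cdot M_i$.

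For (iv), I would apply (i) separately to $x$, $y$, and $z = x+y$: modulo $p_i^{n_i}$, the identity $x+y=z$ becomes $\pi_i(x)M_i + \pi_i(y)M_i \equiv \pi_i(z)M_i \pmod{p_i^{n_i}}$, and since $M_i$ is a unit modulo $p_i^{n_i}$, cancellation yields $\pi_i(z)\equiv \pi_i(x)+\pi_i(y) \pmod{p_i^{n_i}}$. No step presents a genuine obstacle; the only bookkeeping subtlety is to keep track of the two ambient rings, namely $\ZZ_M$ for $x$, $y$, $z$ and $\ZZ_{p_i^{n_i}}$ for each coordinate, and to be careful that the congruence in (iv) is asserted in the smaller quotient ring rather than in $\ZZ_M$.
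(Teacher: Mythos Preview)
Your proof is correct. The paper states Lemma \ref{coord-properties} without proof, introducing it simply as ``a few easy properties for future reference,'' so your argument fills in exactly the routine verification the authors omitted, and your key observation that $p_i^{n_i}\mid M_j$ for $j\neq i$ while $(M_i,p_i)=1$ is the natural way to do this.
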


Each coordinate $\pi_i(x)$ of $x\in \ZZ_M$ can be subdivided further into digits as follows. With
$\ZZ_{p_i^{n_i}}$ represented as $\{0,1,\dots, p_i^{n_i} -1\}$ with addition and multiplication modulo $p_i^{n_i}$,
we can write uniquely
$$
\pi_i(x) = \sum_{j=0}^{n_i-1} \pi_{i,j}(x) p_i^{j},\ \ \pi_{i,j}(x) \in\{0,1,\dots,p_i-1\}.
$$
Observe that 
$(x -x' ,M)=\prod_{j=1}^K p_i^{\gamma_i}$ if and only if for each $i=1,\dots,K$,
\begin{equation}\label{array-gcd}
\gamma_i = \begin{cases} 
\min\{j: \pi_{i,j}(x)\neq \pi_{i,j}(x') \} &\hbox{ if }  \pi_i(x)\neq \pi_i(x')
\\
n_i &\hbox{ if } \pi_i(x)=\pi_i(x').
\end{cases}
\end{equation}


\subsection{Grids, planes, lines, fibers}\label{gridsetal}


\begin{definition}\label{def-grid}
Let $D|M$. A $D$-grid in $\ZZ_M$ is a set of the form
$$
\Lambda(x,D):= x+D\ZZ_M=\{x'\in\ZZ_M:\ D|(x-x')\}
$$
for some $x\in\ZZ_M$.
\end{definition}

An important case of interest is as follows. Let $N|M$.
If $N=p_1^{\alpha_1} \dots p_K^{\alpha_K}$, with  $\alpha_1,\dots,\alpha_K\geq 0$, we let 
$$
D(N):= p_1^{\gamma_1} \dots p_K^{\gamma_K},
$$
where $\gamma_i=\max(0,\alpha_i-1)$ for $i=1,\dots, K$. Then a $D(N)$-grid is a ``top-level" grid on the scale $N$, and a natural setting to work on that scale.

While a grid $\Lambda$ is always an arithmetic progression in $\ZZ_M$, it is often helpful to represent $\ZZ_M$ by a $K$-dimensional coordinate array as in Section \ref{sec-array} and, accordingly, assign a geometric interpretation to $\Lambda$. We point out several useful special cases below.

A {\em line} through $x\in\ZZ_M$ in the $p_i$ direction is the set
$$
\ell_i(x):= \Lambda(x,M_i),
$$
and a {\em $(K-1)$-dimensional plane} through $x\in\ZZ_M$ perpendicular to the $p_i$ direction is a set of the form
\begin{equation}\label{def-plane}
\Pi(x,p_i^{\alpha_i}):=\Lambda(x,p_i^{\alpha_i}).
\end{equation}
Note that (\ref{def-plane}) defines a plane on the scale $M_ip_i^{\alpha_i}$, which may be different from $M$. 

An {\em $M$-fiber in the $p_i$ direction} is a set of the form $x*F_i$, where $x\in\ZZ_M$ and
\begin{equation}\label{def-Fi}
F_i=\{0,M/p_i,2M/p_i,\dots,(p_i-1)M/p_i\}.
\end{equation}
Thus $x*F_i=\Lambda(x,M/p_i)$. (More complicated multiscale fiber chains will be defined later.) A set $A\subset \ZZ_M$ is {\em $M$-fibered in the $p_i$ direction} if there is a subset $A'\subset A$ such that $A=A'*F_i$.


\subsection{Cyclotomic polynomials and cyclotomic divisibility}
\label{sec-cyclo}


We state a few basic facts about cyclotomic polynomials for future reference. By (\ref{poly-e0}), we have
\begin{equation}\label{polysec-e0}
1+X+X^2+\dots+X^{n-1}=\prod_{s|n,s\neq 1}\Phi_s(X).
\end{equation}
In particular, if $p$ is a prime number, then $\Phi_p(X)=1+X+\dots+X^{p-1}$, and, more generally, by induction
$$
\Phi_{p^\alpha}(X)=\Phi_p(X^{p^{\alpha-1}}) = 1 + X^{p^{\alpha-1}} + X^{2p^{\alpha-1}} + \dots + X^{(p-1)p^{\alpha-1}},
\ \ \alpha\geq 1.
$$
Thus $\Phi_{p^\alpha}(1)=p$, and this together with (\ref{polysec-e0}) implies that $\Phi_s(1)=1$ for all $s$ that are not prime powers. 

Suppose that $A\oplus B=\ZZ_M$, with $M=\prod_{i=1}^K p_i^{n_i}$ as before. By (\ref{poly-e1}), we have 
$A(X)B(X)=1+X+\dots+X^{M-1}$ mod $(X^M-1)$. For every prime power $s=p^\alpha|M$, we must have $\Phi_s(X)|A(X)B(X)$, so that 
$$
M= \prod_{i=1}^K \prod_{\alpha_i=1}^{n_i} \Phi_{p_i^{\alpha_i}}(1) \, \Big| \, A(1)B(1)=|A|\,|B|=M.
$$
It follows that 
$$
|A|= \prod_{s\in S_A} \Phi_s(1)
$$
and similarly for $B$, with $S_A$, $S_B$ defined as in Theorem \ref{CM-thm}; this is the proof of the tiling condition (T1) given in \cite{CM}. Moreover, for any prime power 
$s=p^{\alpha_i}|M$, we have that $\Phi_s(X)$ divides exactly one of $A(X)$ and $B(X)$. (This is not true for $s|M$ with two or more distinct prime factors. For such $s$, the corresponding cyclotomic polynomial $\Phi_s(X)$ may divide either one or both of $A(X)$ and $B(X)$.)

Divisibility by prime power cyclotomics has the following combinatorial interpretation. For $A\subset\ZZ_M$, the condition $\Phi_{p_i}|A$ means that the elements of $A$ are uniformly distributed modulo $p_i$, so that 
$$
|A\cap\Pi(x,p_i)|=|A|/p_i \ \ \forall x\in\ZZ_M.
$$
More generally, for $1\leq \alpha\leq n_i$, we have $\Phi_{p_i^\alpha}(X)|A(X)$ if and only if 
\begin{equation}\label{cyclo-uniform}
|A\cap \Pi(x,p_i^\alpha)|=\frac{1}{p_i} |A\cap \Pi(x,p_i^{\alpha-1})| \ \ \forall x\in\ZZ_M,
\end{equation} 
so that the elements of $A$ are uniformly distributed mod $p_i^\alpha$ within each residue class mod $p_i^{\alpha-1}$.
In particular, this implies the following bound on the number of points of a tile in a plane on a scale $M_ip_i^{n_i-\alpha_i}$, or, equivalently, in an arithmetic progression of step $p_i^{n_i-\alpha_i}$.

\begin{lemma}[\bf Plane bound]\label{planebound}
Let $A\oplus B=\ZZ_M$, where $M=\prod_j p_j^{n_j}$ and $|A|=\prod_j p_j^{\beta_j}$.
Then for every $x\in\ZZ_M$ and $0\leq\alpha_i\leq n_i$ we have 
\begin{equation}\label{e-planebound}
|A\cap\Pi(x,p_i^{n_i-\alpha_i})|\leq p_i^{\alpha_i}\prod_{\nu\neq i}p_\nu^{\beta_\nu}.
\end{equation}
\end{lemma}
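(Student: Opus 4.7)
The plan is to iterate the combinatorial characterization (\ref{cyclo-uniform}) of prime-power cyclotomic divisibility along the nested chain of planes
$$\ZZ_M=\Pi(x,1)\supset\Pi(x,p_i)\supset\Pi(x,p_i^2)\supset\dots\supset\Pi(x,p_i^{n_i-\alpha_i}),$$
starting from $|A\cap\Pi(x,1)|=|A|$ and tracking how the count changes at each step.

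First I would note two facts about passing from $\Pi(x,p_i^{\beta-1})$ to $\Pi(x,p_i^\beta)$ for $1\le\beta\le n_i-\alpha_i$. If $\Phi_{p_i^\beta}(X)\mid A(X)$, then (\ref{cyclo-uniform}) gives the exact identity $|A\cap\Pi(x,p_i^\beta)|=\frac{1}{p_i}|A\cap\Pi(x,p_i^{\beta-1})|$. If instead $\Phi_{p_i^\beta}\nmid A$ (equivalently $\Phi_{p_i^\beta}\mid B$), one still has the crude bound $|A\cap\Pi(x,p_i^\beta)|\le|A\cap\Pi(x,p_i^{\beta-1})|$ simply from the inclusion of planes. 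Writing $k=\#\{\beta\in[1,n_i-\alpha_i]:\Phi_{p_i^\beta}\mid A\}$, telescoping then yields
$$|A\cap\Pi(x,p_i^{n_i-\alpha_i})|\le\frac{|A|}{p_i^k}.$$

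To finish, I would combine this with (T1) in the form recalled in Section \ref{sec-cyclo}, namely $|A|=\prod_\nu p_\nu^{\beta_\nu}$ with $\beta_\nu=\#\{\gamma\in[1,n_\nu]:\Phi_{p_\nu^\gamma}\mid A\}$. Since each prime-power cyclotomic $\Phi_{p_i^\gamma}$ with $\gamma\in[1,n_i]$ divides exactly one of $A(X),B(X)$, the difference $\beta_i-k$ counts exactly the indices $\gamma\in[n_i-\alpha_i+1,n_i]$ with $\Phi_{p_i^\gamma}\mid A$, and in particular $\beta_i-k\le\alpha_i$ because the interval $[n_i-\alpha_i+1,n_i]$ contains only $\alpha_i$ integers. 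Substituting back gives
$$|A\cap\Pi(x,p_i^{n_i-\alpha_i})|\le p_i^{\beta_i-k}\prod_{\nu\ne i}p_\nu^{\beta_\nu}\le p_i^{\alpha_i}\prod_{\nu\ne i}p_\nu^{\beta_\nu},$$
which is precisely (\ref{e-planebound}).

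No conceptual obstacle is expected here; the argument is essentially bookkeeping with the uniform-distribution description of prime-power cyclotomic divisibility. The only piece that requires a bit of care is matching up the number of factor-$1/p_i$ drops produced by the telescoping with the number of levels $\beta$ in the relevant range at which $\Phi_{p_i^\beta}$ divides $A$ rather than $B$, and verifying that the levels falling outside the telescoping range account for at most $\alpha_i$ of the factors $p_i$ in $|A|$.
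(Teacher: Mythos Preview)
Your argument is correct and is precisely the approach the paper intends: the paper does not write out a proof but simply remarks that the bound follows easily from the combinatorial characterization (\ref{cyclo-uniform}), which is exactly the telescoping-plus-(T1) bookkeeping you carry out.
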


The bound in Lemma \ref{planebound} is, in general, sharp. For example, if $A\subset p_i^{n_i-1}\ZZ_M$ and $\Phi_{p_i^{n_i}}|A$, then $|A\cap\Pi(x,p_i^{n_i})|=|A|/p_i$, so that (\ref{e-planebound}) holds with equality for $\alpha_i=0$. Examples of sets $A\subset\ZZ_M$ with the above properties are easy to construct using the standard tiling sets defined in Section \ref{standards}.

We also note the following. Let $N|M$. Then the condition
$$
\Phi_s|A \ \ \forall s|N, s\neq 1,
$$
means that $1+X+\dots+X^{N-1}$ divides $A(X)$, or, equivalently, that the elements of $A$ are uniformly distributed mod $N$. 
For example, let $N= p_1p_2\dots p_K$. Suppose that $|A|=N$ and that $\Phi_{p_i}|A$ for all $i=1,\dots,K$. Then $A$ satisfies (T2) if and only if $\Phi_s|A$ for all $s|N$ with $s\neq 1$, or, equivalently, if and only if each residue class mod $N$ contains exactly one element of $A$.


\subsection{Divisor set and divisor exclusion}


\begin{definition}[{\bf Divisor set}]\label{div-set}
For a set $A\subset\ZZ_M$, define
\begin{equation}\label{divisors}
\Div(A)=\Div_M(A):=\{(a-a',M):\ a,a'\in A\}.
\end{equation}
Informally, we will refer to the elements of $\Div(A)$ as the {\em divisors of $A$} or {\em differences in $A$}. We also define
$$
\Div_N(A):=\{(a-a',N):\ a,a'\in A\}
$$
for $A\subset\ZZ_M$ and $N|M$. 
\end{definition}

Divisor sets will be a key concept in our analysis, thanks to the following theorem due to Sands \cite{Sands}. 

\begin{theorem}\label{thm-sands} {\bf (Divisor exclusion; Sands \cite{Sands})}
If $A,B\subset \ZZ_M$ are sets, then $A\oplus B=\ZZ_M$
if and only if $|A|\,|B|=M$ and 
\begin{equation}\label{div-exclusion}
\Div(A) \cap \Div(B)=\{M\}.
\end{equation}
\end{theorem}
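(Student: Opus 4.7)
The plan is to prove the two implications separately; the reverse direction is elementary, while the forward one rests on Tijdeman's rescaling trick. For $(\Leftarrow)$, assume $|A||B|=M$ and $\Div(A)\cap\Div(B)=\{M\}$. Since there are exactly $M$ pair sums $a+b$ with $a\in A$, $b\in B$, it suffices to show they are pairwise distinct in $\ZZ_M$, so that $A+B=\ZZ_M$. If instead $a+b=a'+b'$ with $(a,b)\ne(a',b')$, then without loss of generality $a\ne a'$, and $c:=a-a'=b'-b$ is a nonzero common element of $A-A$ and $B-B$; this forces $(c,M)\in\Div(A)\cap\Div(B)$ with $(c,M)<M$, contradicting the hypothesis.

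For $(\Rightarrow)$, assume $A\oplus B=\ZZ_M$; the cardinality assertion is immediate. Running the same pigeonhole argument in reverse yields the weaker statement $(A-A)\cap(B-B)=\{0\}$, since any nonzero common element would produce two distinct decompositions of the same element of $\ZZ_M$. The actual task is to upgrade this from \textit{same element} to \textit{same gcd with $M$}.

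Suppose for contradiction that some proper divisor $d$ of $M$ lies in $\Div(A)\cap\Div(B)$, and choose $c\in A-A$, $c'\in B-B$ with $(c,M)=(c',M)=d$. Both $c$ and $c'$ generate the cyclic subgroup $d\ZZ_M$ of order $n:=M/d$, so $c'\equiv rc\pmod M$ for some $r$ with $(r,n)=1$. By the Chinese remainder theorem there exists $\tilde r\in\NN$ with $\tilde r\equiv r\pmod n$ and $(\tilde r,M)=1$; since $nc=0$ in $\ZZ_M$, this gives $\tilde r c\equiv rc\equiv c'\pmod M$. Multiplication by $\tilde r$ is a bijection of $\ZZ_M$, so $\tilde r A$ is a set of size $|A|$; moreover $(\tilde r,|A|)=1$ because $|A|\mid M$. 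Lifting $A\oplus B=\ZZ_M$ to a tiling of $\ZZ$ with complement $B+M\ZZ$ and invoking Tijdeman's theorem, $\tilde r A$ still tiles $\ZZ$ by this complement, so $\tilde r A\oplus B=\ZZ_M$. The difference-exclusion step, applied to this new tiling, forces $(\tilde r A-\tilde r A)\cap(B-B)=\{0\}$, contradicting $c'=\tilde r c\in\tilde r(A-A)\cap(B-B)$ with $c'\ne 0$.

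The main obstacle, and the real content of the theorem, is precisely this upgrade step. The key observation is that $\{c\in\ZZ_M:(c,M)=d\}$ forms a single orbit of $(\ZZ/M\ZZ)^*$ acting by multiplication, and Tijdeman's rescaling lets us move $c$ onto $c'$ within this orbit without disturbing the complement $B$, thereby reducing the gcd-level exclusion to the already established difference-set exclusion.
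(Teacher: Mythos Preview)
Your proof is correct, but the route you take for the forward implication is genuinely different from the one the paper presents.

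For the backward direction both you and the paper rely on the same elementary pigeonhole argument (the paper does not spell it out, but nothing else is needed). For the forward direction, the paper's proof (given in Remark~\ref{sands-proof}) is harmonic-analytic: it applies the Fourier identity of Theorem~\ref{main-identity} with $N=M$, observes that the cyclotomic divisibility coming from $A\oplus B=\ZZ_M$ kills every term on the right-hand side except the $d=1$ term, and then reads off $\bbA^M_m[A]\,\bbB^M_m[B]=0$ for all $m\neq M$ by comparing with the $m=M$ contribution on the left. Your argument instead reduces divisor-level exclusion to ordinary difference-set exclusion by exploiting the transitivity of the $(\ZZ/M\ZZ)^\times$-action on $\{c:(c,M)=d\}$ together with Tijdeman's rescaling theorem to transport a putative bad difference in $A-A$ onto the given bad difference in $B-B$ without changing the tiling complement $B$.

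Each approach has its merits. Yours is self-contained once Tijdeman's theorem is available, and it makes transparent why the gcd statistic (rather than the differences themselves) is the right invariant: it is exactly what is preserved under the dilations that Tijdeman permits. The paper's approach, by contrast, does not invoke an external structural theorem; it falls out as a special case of the box-product machinery that the paper develops anyway, and so it fits naturally into the larger framework used throughout (saturating sets, restricted boxes, and so on). It is also worth noting that Sands's original 1979 proof predates Tijdeman's theorem, so neither your argument nor the paper's coincides with the historical one; the paper even remarks that Sands's own proof is easier than the box-product route.
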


An alternative proof of Sands's Theorem, based on Theorem \ref{main-identity} due to \cite{GLW}, is included in 
Remark \ref{sands-proof}.


\section{A reformulation of T2}
\label{standards}


\subsection{Standard tiling complements}

We continue to assume that $M=\prod_{i=1}^K p_i^{n_i}$,
where $p_1,\dots,p_K$ are distinct primes and $n_i>0$. We equip $\ZZ_M$ with the array coordinate system from
Section \ref{sec-array} and use the notation of that section. Recall also that the divisor set
$\Div(A)$ for a set $A\subset\ZZ_M$ was defined in (\ref{divisors}).

\begin{definition}\label{standard}
Let $A,B$ be sets in $\ZZ_M$ such that $A\oplus B=\ZZ_M$. Let
\begin{equation*}
\mathfrak{A}_i(A) =\left\{\alpha_i \in \{1,2,\dots,n_i\}:\ \Phi_{p_i^{\alpha_i}}(X) | A(X) \right\}  
\end{equation*}
The {\em standard tiling set} $A^\flat$ is defined via its mask polynomial
\begin{equation}\label{replacement-factors}
\begin{split}
A^\flat(X) &= \prod_{i=1}^{K} \prod_{\alpha_i\in \mathfrak{A}_i(A)}  \Phi_{p_i}\left(X^{M_ip_i^{\alpha_i-1}}\right).
\\
&=  \prod_{i=1}^{K} \prod_{\alpha_i\in \mathfrak{A}_i(A)}  
\left(1+ X^{M_ip_i^{\alpha_i-1}} + \dots + X^{(p_i-1)M_ip_i^{\alpha_i-1}} \right).
\end{split}
\end{equation} 
\end{definition}

\begin{figure}[h]
\captionsetup{justification=centering}
\includegraphics[scale=1.5]{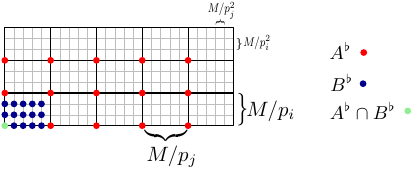}
\caption{The standard sets $A^\flat,B^\flat\subset\ZZ_{p_i^2p_j^2}$ with $p_i=3, p_j=5$\\
	 and $\Phi_{p_i^2}\Phi_{p_j^2}|A,\Phi_{p_i}\Phi_{p_j}|B$}
\end{figure}

\begin{lemma}\label{lemma-standard}
Define $A^\flat$ as above. Then $A^\flat(X)$ satisfies (T2) and has the same prime power cyclotomic divisors as $A(X)$. 
\end{lemma}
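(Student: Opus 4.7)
The plan is to compute explicitly the full factorization of $A^\flat(X)$ into cyclotomic polynomials, using the identity
\[
\Phi_p(X^n) = \frac{X^{pn}-1}{X^n-1} = \prod_{d\mid pn,\, d\nmid n}\Phi_d(X)
\]
applied to each building block in \eqref{replacement-factors}. Taking $p=p_i$ and $n=p_i^{\alpha_i-1}M_i$, and noting that $p_i\nmid M_i$, a divisor $d\mid p_i^{\alpha_i}M_i$ fails to divide $p_i^{\alpha_i-1}M_i$ precisely when $p_i^{\alpha_i}\parallel d$. Hence each factor decomposes as
\[
\Phi_{p_i}\!\left(X^{M_ip_i^{\alpha_i-1}}\right) = \prod_{e\mid M_i}\Phi_{p_i^{\alpha_i} e}(X),
\]
and therefore
\[
A^\flat(X) = \prod_{i=1}^K\prod_{\alpha_i\in\mathfrak{A}_i(A)}\prod_{e\mid M_i}\Phi_{p_i^{\alpha_i}e}(X).
\]

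From this factorization, the first claim drops out: a divisor $p_i^{\alpha_i}e$ with $e\mid M_i$ is a prime power if and only if $e=1$ (since $e>1$ contributes a prime factor different from $p_i$). The prime power cyclotomic divisors of $A^\flat(X)$ are therefore exactly $\{\Phi_{p_i^{\alpha_i}}:\alpha_i\in\mathfrak{A}_i(A),\ i=1,\dots,K\}$, which coincide with those of $A(X)$ by the definition of $\mathfrak{A}_i(A)$. In particular $S_{A^\flat}=S_A$.

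For (T2), I would take $s_1,\dots,s_k\in S_{A^\flat}$ that are powers of distinct primes, say $s_j=p_{i_j}^{\alpha_{i_j}}$ with the $i_j$ distinct and $\alpha_{i_j}\in\mathfrak{A}_{i_j}(A)$. Focusing on the factor corresponding to $(i_1,\alpha_{i_1})$, I set $e=s_2\cdots s_k=p_{i_2}^{\alpha_{i_2}}\cdots p_{i_k}^{\alpha_{i_k}}$. Because $p_{i_2},\dots,p_{i_k}$ are all distinct from $p_{i_1}$ and $\alpha_{i_j}\le n_{i_j}$, this $e$ divides $M_{i_1}=\prod_{j\ne i_1}p_j^{n_j}$. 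Hence $\Phi_{p_{i_1}^{\alpha_{i_1}}e}=\Phi_{s_1s_2\cdots s_k}$ appears in the factorization of $\Phi_{p_{i_1}}(X^{M_{i_1}p_{i_1}^{\alpha_{i_1}-1}})$ above, and so divides $A^\flat(X)$, giving (T2).

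No step of this really constitutes an obstacle: the identity $\Phi_p(X^n)=\prod_{d\mid pn,\,d\nmid n}\Phi_d(X)$ is standard, and once it is in place the only thing to verify is the elementary arithmetic fact that for distinct primes $p_{i_1},\dots,p_{i_k}$ the product $s_2\cdots s_k$ divides $M_{i_1}$. The one point worth double-checking is that the various factors on the right-hand side of the expansion of $A^\flat(X)$ share no repeated cyclotomic factors that would alter $S_{A^\flat}$; but since distinct pairs $(i,\alpha_i)$ give disjoint sets of subscripts $\{p_i^{\alpha_i}e:e\mid M_i\}$ at the level of the associated prime power part $p_i^{\alpha_i}$, the reading off of $S_{A^\flat}$ in the previous paragraph is unambiguous.
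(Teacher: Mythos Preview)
Your proof is correct and follows essentially the same route as the paper. The paper packages the factorization you derive as a separate lemma (Lemma~\ref{cyclo-divisors}), proved by checking when $e^{2\pi i/s}$ is a root of $\Phi_{p_i}(X^{M_ip_i^{\alpha_i-1}})$, whereas you obtain the identical factorization $\Phi_{p_i}(X^{M_ip_i^{\alpha_i-1}})=\prod_{e\mid M_i}\Phi_{p_i^{\alpha_i}e}(X)$ directly from $\Phi_p(X^n)=(X^{pn}-1)/(X^n-1)$ together with $X^m-1=\prod_{d\mid m}\Phi_d(X)$; after that, the verification of $S_{A^\flat}=S_A$ and of (T2) proceeds exactly as in the paper.
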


\begin{proof}
Let $\alpha\in\{1,2,\dots,n_i\}$. 
Then $\Phi_{p_i^{\alpha}}(X) | A^\flat(X)$ if and only if it divides one of the factors $\Phi_{p_i}(X^{M_ip_i^{\alpha_i-1}})$
with $\alpha_i \in \mathfrak{A}_i(A)$.
By Lemma \ref{cyclo-divisors} below, this happens if and only if $\alpha=\alpha_i$. 
Furthermore, in that case we also have
$\Phi_{d p_i^{\alpha_i}}(X) | A^\flat(X)$ for all $d|M_i$, so that in particular (T2) holds for $A^\flat$.
\end{proof}

\begin{lemma}\label{cyclo-divisors}
Let 
$$\Psi(X)=1+X^{Np^{\alpha-1}} + X^{2Np^{\alpha-1}} + \dots + X^{(p-1)Np^{\alpha-1}}
= \Phi_{p}\left(X^{Np^{\alpha-1}}\right),
$$
where $(N,p)=1$. Then $\Phi_s(X)|\Psi(X)$ if and only if $s=dp^\alpha$ for some $d|N$.
\end{lemma}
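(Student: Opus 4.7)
The plan is to characterize $\Phi_s(X) \mid \Psi(X)$ by looking at roots. I will first identify the root set of $\Psi$ explicitly. Since $\Psi(X) = \Phi_p(X^{Np^{\alpha-1}})$ and the roots of $\Phi_p(Y)$ are precisely the primitive $p$-th roots of unity, a complex number $X_0$ is a root of $\Psi$ if and only if $X_0^{Np^{\alpha-1}}$ is a primitive $p$-th root of unity, equivalently,
\[
X_0^{Np^{\alpha}} = 1 \quad \text{and} \quad X_0^{Np^{\alpha-1}} \neq 1.
\]

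Next, I will exploit irreducibility. Because $\Phi_s(X) \in \QQ[X]$ is irreducible and $\Psi(X) \in \QQ[X]$, the divisibility $\Phi_s(X) \mid \Psi(X)$ holds if and only if some (equivalently, every) primitive $s$-th root of unity $\zeta_s$ is a root of $\Psi(X)$. Using the characterization $\zeta_s^m = 1 \iff s \mid m$, this reduces the problem to the purely numerical condition
\[
s \mid Np^{\alpha} \quad \text{and} \quad s \nmid Np^{\alpha-1}.
\]

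Finally, I will analyze this numerical condition by separating the prime $p$ from the rest. Writing $s = p^\beta d'$ with $(d',p)=1$, and using $(N,p)=1$, the condition $s \mid Np^\alpha$ is equivalent to $\beta \le \alpha$ together with $d' \mid N$. Under $d' \mid N$, the condition $s \nmid Np^{\alpha-1}$ forces $\beta \ge \alpha$. Hence $\beta = \alpha$, and $s = dp^\alpha$ with $d = d' \mid N$, which is exactly the claimed form. Conversely, any $s$ of this form clearly satisfies both divisibility conditions above, so the roots of $\Phi_s$ lie in the root set of $\Psi$, giving $\Phi_s \mid \Psi$.

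I do not anticipate a significant obstacle: the main content is merely translating cyclotomic divisibility into a statement about orders of roots of unity and then performing a short $p$-adic factoring of $s$. The only technical point worth stating carefully is the use of irreducibility of $\Phi_s$ over $\QQ$ to pass from ``one primitive $s$-th root is a root of $\Psi$'' to ``$\Phi_s$ divides $\Psi$'' as polynomials in $\ZZ[X]$.
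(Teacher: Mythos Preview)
Your argument is correct and follows essentially the same route as the paper's proof: both reduce $\Phi_s\mid\Psi$ to the condition that a primitive $s$-th root of unity satisfies $\zeta_s^{Np^{\alpha-1}}$ is a primitive $p$-th root of unity, and then analyze the resulting arithmetic constraint on $s$. The paper phrases this as $Np^\alpha=ks$ with $(k,p)=1$, whereas you phrase it as $s\mid Np^\alpha$ and $s\nmid Np^{\alpha-1}$; these are equivalent, and your $p$-adic splitting $s=p^\beta d'$ is just a slightly more explicit way of reaching the same conclusion.
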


\begin{proof}

We have $\Phi_s(X)|\Psi(X)$ if and only if $\Psi(e^{2\pi i /s})=0$, i.e. $(e^{2\pi i /s})^{Np^{\alpha-1}}$
is a root of $\Phi_p$. This happens if and only if $(Np^{\alpha-1})/s = k/p$ for some integer $k$ such that
$(k,p)=1$. Equivalently, $Np^\alpha =ks$ with $(k,p)=1$. This means that $k|N$ and $s=\frac{N}{k}p^\alpha
= d p^\alpha$, where $d=N/k$ is a divisor of $N$.
\end{proof}

Observe that the standard set $A^\flat$, while not necessarily a grid, is highly structured. In terms of array coordinates, we have
\begin{equation}\label{flats}
\begin{split}
A^\flat &=  
\left\{ x\in\ZZ_M:\    \pi_i(x)=\sum_{\alpha_i\in \mathfrak{A}_i(A)} \pi_{i,\alpha_i-1}(x)p_i^{\alpha_i-1}, \   
\pi_{i,\alpha_i-1}(x) \in\{ 0,1,\dots, p_i-1   \}   \right\}
\\
&= \left\{x\in \ZZ_M:\ \pi_{i,\alpha_i-1}(x) =0 \hbox{ for all }i,\alpha_i \hbox{ such that }\alpha_i\notin \mathfrak{A}_i(A)\right\}.
\end{split}
\end{equation}

The {\em standard divisor set} for $A$ is
\begin{equation}\label{flat-divisors}
\Div (A^\flat ) = \left\{ \prod_{i=1}^K p_i^{\alpha_i-1}:\ \ \alpha_i\in \mathfrak{A}_i(A)\cup\{n_i+1\},\ i=1,\dots,K \right\}.
\end{equation}
We will refer to the elements of $\Div(A^\flat)$ as {\em standard divisors} of $A$. 
The set $B^\flat$ is defined similarly. 

With these definitions, we have the following alternative formulations of (T2).

\begin{proposition}\label{replacement} Suppose that $A\oplus B=\ZZ_M$. Then the following are equivalent:

(i) $\Div(A^\flat) \cap \Div(B)= \{M\}$,

(ii) $A^\flat \oplus B= \ZZ_M$,

(iii) $B$ satisfies (T2),

(iv) $|B\cap(x*A^\flat)|=1$ for every $x\in\ZZ_M$.

\end{proposition}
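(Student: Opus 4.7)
The plan is to prove the three equivalences in two steps: (i) $\Leftrightarrow$ (ii) as a direct consequence of Sands's theorem, and (ii) $\Leftrightarrow$ (iii) via a cyclotomic divisibility computation based on Lemma \ref{cyclo-divisors}.

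For (i) $\Leftrightarrow$ (ii), I would first check that $A^\flat$ is a genuine set in $\ZZ_M$ (evident from the description (\ref{flats}) in array coordinates) and that $|A^\flat| = |A|$. The latter holds because $A^\flat$ and $A$ share the same prime power cyclotomic divisors by Lemma \ref{lemma-standard}, and the tile $A$ satisfies (T1), so both cardinalities equal $\prod_{s \in S_A} \Phi_s(1)$. Then Theorem \ref{thm-sands} applied to the pair $(A^\flat, B)$ gives the equivalence, since the size constraint $|A^\flat||B|=|A||B|=M$ is automatic.

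The key input for (ii) $\Leftrightarrow$ (iii) is a characterization of which cyclotomics divide $A^\flat$. Writing $A^\flat(X) = \prod_i \prod_{\alpha_i \in \mathfrak{A}_i(A)} \Phi_{p_i}(X^{M_i p_i^{\alpha_i-1}})$ and applying Lemma \ref{cyclo-divisors} to each factor (using irreducibility of $\Phi_s$), I would establish: for any nontrivial $s = \prod_i p_i^{\beta_i}$ dividing $M$, $\Phi_s \mid A^\flat$ if and only if there exists some index $i$ with $\beta_i \in \mathfrak{A}_i(A)$. Granted this characterization, (ii) $\Rightarrow$ (iii) proceeds as follows: take $s_1, \dots, s_k \in S_B$ powers of distinct primes, say $s_\ell = p_{j_\ell}^{\alpha_{j_\ell}}$, and set $s = s_1 \cdots s_k$. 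Because $A \oplus B$ is a tiling, prime power cyclotomics split, so $\alpha_{j_\ell} \notin \mathfrak{A}_{j_\ell}(A)$ for every $\ell$; the remaining exponents in $s$ are zero, hence also not in the corresponding $\mathfrak{A}_i(A)$. The characterization then gives $\Phi_s \nmid A^\flat$, while $A^\flat \oplus B = \ZZ_M$ forces $\Phi_s \mid A^\flat B$, so $\Phi_s \mid B$, which is (T2).

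For the converse (iii) $\Rightarrow$ (ii), I would verify $\Phi_s \mid A^\flat B$ for each $s \mid M$, $s \neq 1$, in two cases: if $s$ is a prime power, the splitting of prime power cyclotomics for $A \oplus B$ together with Lemma \ref{lemma-standard} places $\Phi_s$ in one of $A^\flat$ or $B$; if $s$ has several prime factors $s = \prod_{i} p_i^{\beta_i}$, either some $\beta_i \in \mathfrak{A}_i(A)$ and the characterization yields $\Phi_s \mid A^\flat$, or else every $p_i^{\beta_i}$ (with $\beta_i \geq 1$) lies in $S_B$, whereupon (T2) for $B$ yields $\Phi_s \mid B$. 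The main obstacle is the bookkeeping in the cyclotomic divisibility characterization for $A^\flat$: because $A^\flat$ is a product of many $\Phi_{p_i}(X^{M_i p_i^{\alpha_i-1}})$-type factors, one must use irreducibility of $\Phi_s$ together with Lemma \ref{cyclo-divisors} to pin down the exact divisibility condition; once this is in hand, the remaining verifications are essentially automatic.
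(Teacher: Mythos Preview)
Your proposal is correct and follows essentially the same route as the paper: Sands's theorem for (i)\,$\Leftrightarrow$\,(ii), and the characterization via Lemma~\ref{cyclo-divisors} of exactly which $\Phi_s$ divide $A^\flat(X)$ for (ii)\,$\Leftrightarrow$\,(iii). You are in fact slightly more explicit than the paper in verifying $|A^\flat|=|A|$ (via (T1) and Lemma~\ref{lemma-standard}) before invoking Sands, which is a point the paper takes for granted.
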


\begin{proof}
The equivalence between (i) and (ii) is a special case of Theorem \ref{thm-sands}. The implication (iii) $\Rightarrow$ (ii) follows from the construction in the proof of Theorem A in \cite{CM}; the converse implication (ii) $\Rightarrow$ (iii) was not pointed out there, but it also follows from the same construction.
Specifically, 
by Lemma \ref{cyclo-divisors}, $A^\flat(X)$ is divisible by every cyclotomic polynomial $\Phi_s$ such that
$p_i^\alpha\parallel s$ for some $i\in\{1,\dots,K\}$ and $\alpha\geq 1$ such that $\Phi_{p_i^\alpha}|A$. In other words, the only $s$ such that
$s|M$ but $\Phi_s$ does not divide $A(X)$ are those with $s=\prod_{i=1}^k p_i^{\beta_i}$, where for each $i$
we have either $\beta_i=0$ or $\Phi_{p_i^{\beta_i}}(X)|B(X)$. Let $\mathcal{S}_B$ be the set of such $s$.

If $B$ satisfies (T2), then all $\Phi_s$ with $s\in \mathcal{S}_B$ divide $B(X)$, which implies (ii). Conversely, suppose that (ii) holds. Then each $\Phi_s$ with $s|M$ has to divide $A(X)B(X)$. 
By Lemma \ref{cyclo-divisors} again, if $s\in \mathcal{S}_B$, then $\Phi_s$ does not divide $A(X)$, so it must divide $B(X)$.
Therefore (T2) holds for $B$.

For part (iv), we shall prove that (ii) implies (iv) and (iv) implies (i). Suppose that (ii) holds. 
We first claim that
\begin{equation}\label{add4}
|B\cap(x*A^\flat)|\leq 1\ \ \forall x\in\ZZ_M.
\end{equation}
Indeed, if $b,b'\in B\cap(x*A^\flat)$, then $b=x+a$ and $b'=x+a'$ for some $a,a'\in A^\flat$, so that $b-a=b'-a'$, contradicting (ii) unless $b=b'$ and $a=a'$. 

It remains to prove that $(x_0*A^\flat)\cap B\neq\emptyset$ for each $x_0\in\ZZ_M$. Let $x_0\in\ZZ_M$.
Since $\hbox{Div}(B)=\hbox{Div}(-B)$, we have $A^\flat\oplus (-B)=\ZZ_M$ by Theorem \ref{thm-sands}. 
It follows that there must exist $a_0\in A^\flat$ and $-b_0\in (-B)$ such that $a_0-b_0=-x_0$. The latter means $a_0+x_0=b_0$, implying $(x_0*A^\flat)\cap B\neq\emptyset$ as claimed. Hence (iv) follows.

%
%
%

Finally, suppose that (i) fails. Then there exist $b,b'\in B$ and $m\in\Div(A^\flat)\setminus\{M\}$ such that $(b-b',M)=m$. But then $b,b'\in B\cap(b*A^\flat)$ with $b\neq b'$, contradicting (iv).
\end{proof}

\begin{remark}\label{1dim-standard}
If $A\oplus B$ tiles $\ZZ_M$, where $M=p^n$ is a prime power, then (T2) holds vacuously for both sets. Hence we have both $A\oplus B^\flat=\ZZ_M$ and $A^\flat\oplus B=\ZZ_M$.
\end{remark}

It is tempting to try to prove that 
if $A\oplus B=\ZZ_M$, then we should have $\Div(A^\flat)\subseteq \Div(A)$. By Proposition \ref{replacement},
this would imply that $B$ satisfies (T2). However, 
the example below shows that $\Div(A)$ does not in fact have to contain $\Div(A^\flat)$.

\medskip\noindent
{\bf Example.} Let $M=p^2q$, where $p,q$ are distinct primes with $p>q$, and let $A$ be the set of numbers whose array coordinates in $\ZZ_M$ are
$(i+jp,i)$, $i=0,1,\dots,q-1$, $j=0,1,\dots,p-1$. Then $A\oplus B=\ZZ_M$ with $B=\{(j,0):\ j=0,1,\dots,p-1\}$,
and both sets satisfy T2.
Since $\Phi_{p^2}$ and $\Phi_q$ divide $A(X)$, we have $A^\flat= \{(jp,i): \ i=0,1,\dots,q-1, j=0,1,\dots,p-1\}$,
and in particular $p\in\Div(A^\flat)$. However, $p\notin \Div(A)$. To see this, consider $a,a'\in A$
with coordinates $(i+jp,i)$ and $(i'+j'p,i')$. If $i\neq i'$, then $(a-a',M)=1$. If on the other hand $i=i'$ but $a\neq a'$, then $(a-a',M)=pq$.

This shows that the condition $\Div(A^\flat)\subseteq \Div(A)$ is not necessary for a tiling, nor is it simply a 
consequence of $A(X)$ having the requisite prime power
cyclotomic divisors. Note, however, that in this example we still have $p\notin\Div(B)$.


\subsection{T2-equivalence}

\begin{definition}\label{t2-equi}
We say that the tilings $A\oplus B=\ZZ_M$ and $A'\oplus B=\ZZ_M$ are {\em T2-equivalent} if 
\begin{equation}\label{t2-equi-a}
A\hbox{ satisfies (T2) }\Leftrightarrow A'\hbox{ satisfies (T2).}
\end{equation}
\end{definition}

Since the sets $A$ and $A'$ tile the same group $\ZZ_M$ with the same tiling complement $B$, they must have the same cardinality and the same prime power cyclotomic divisors, as discussed in Section \ref{sec-cyclo}. For brevity, we will sometimes say simply that $A$ is T2-equivalent to $A'$ if both $M$ and $B$ are clear from context.

In practice, $A'$ will be a set obtained from $A$ using certain permitted manipulations such as fiber shifts. 
We will use T2-equivalence to reduce proving (T2) for the initial tiling to proving (T2) for related tilings that are increasingly more structured. In particular, the following reduction is sufficient to prove (T2) for both sets
in the given tiling.

\begin{corollary}\label{get-standard}
Suppose that the tiling $A\oplus B=\ZZ_M$ is T2-equivalent to the tiling $A^\flat \oplus B=\ZZ_M$. Then 
$A$ and $B$ satisfy (T2).
\end{corollary}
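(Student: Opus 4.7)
The plan is to deduce the result by chaining together three pieces already established in the preceding paragraphs: the definition of T2-equivalence, the structural lemma about $A^\flat$, and the reformulation in Proposition \ref{replacement}. No auxiliary construction should be needed; everything will follow from the hypothesis that $A^\flat\oplus B=\ZZ_M$ is itself a valid tiling.

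First, I would handle $A$. By Lemma \ref{lemma-standard}, the standard set $A^\flat$ satisfies (T2). Since the tilings $A\oplus B=\ZZ_M$ and $A^\flat\oplus B=\ZZ_M$ are T2-equivalent by hypothesis, Definition \ref{t2-equi} immediately gives that $A$ satisfies (T2) as well.

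Next, I would handle $B$. The hypothesis of T2-equivalence presupposes that $A^\flat\oplus B=\ZZ_M$ is actually a tiling, which is condition (ii) of Proposition \ref{replacement} applied to the pair $(A,B)$. By the equivalence (ii) $\Leftrightarrow$ (iii) of that proposition, condition (ii) is the same as (iii), i.e., $B$ satisfies (T2).

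There is essentially no obstacle to this argument; the point of the corollary is to advertise the utility of the T2-equivalence framework rather than to require new work. The content has been front-loaded into Proposition \ref{replacement}, whose nontrivial direction (ii) $\Rightarrow$ (iii) was the only substantive input, together with Lemma \ref{lemma-standard} which records that $A^\flat$ is automatically a (T2) set by construction. Accordingly, the proof will be no more than a few lines citing these two results and Definition \ref{t2-equi}.
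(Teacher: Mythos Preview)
Your proposal is correct and follows essentially the same argument as the paper: use that $A^\flat$ satisfies (T2) together with T2-equivalence to get (T2) for $A$, and then apply Proposition~\ref{replacement} (ii)$\Rightarrow$(iii) to the tiling $A^\flat\oplus B=\ZZ_M$ to get (T2) for $B$. The only difference is cosmetic---you cite Lemma~\ref{lemma-standard} explicitly for the fact that $A^\flat$ satisfies (T2), while the paper takes this as understood.
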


\begin{proof}
Since $A^\flat$ satisfies (T2), by (\ref{t2-equi-a}) so does $A$. By Proposition \ref{replacement}, $B$ satisfies
(T2) as well.
\end{proof}


\section{Box product}\label{sec-box-product}


\subsection{Box product characterization of tiling}

We continue to assume that $M=\prod_{i=1}^K p_i^{n_i}$, where $p_1,\dots,p_K$ are distinct primes. We
will use $\phi$ and $\mu$ to denote, respectively, the Euler totient function and the M\"obius function: if 
$n=\prod_{j=1}^L q_j^{r_j}$, where $q_1,\dots,q_L$ are distinct primes, then
$$
\phi(n)=n \prod_{j=1}^L \frac{q_j-1}{q_j} = \prod_{j=1}^L (q_j-1)q_j^{r_j-1},
$$
$$\mu(n)=\begin{cases} (-1)^L & \hbox{ if }r_1=r_2=\dots=r_L=1,
\\
0 & \hbox{  if } \exists j\in\{1,\dots,L\} \hbox{ such that }r_j\geq 2.
\end{cases}
$$

Let $N|M$. Reordering the primes if necessary, we
may assume that $N=p_1^{\alpha_1} \dots p_k^{\alpha_k}$, 
with $1\leq k\leq K$ and $\alpha_1,\dots,\alpha_k\geq 1$.

\begin{definition}\label{N-box} {\bf ($N$-boxes)}
An {\em $N$-box} is a $k$-dimensional matrix 
$$
\bbA=\big( \bbA_{(\gamma_1,\dots,\gamma_k)} \big) _{0\leq \gamma_j\leq \alpha_j,\ j=1,\dots,k}
$$
of size $(\alpha_1+1)\times \dots \times (\alpha_k+1)$,
with entries $\bbA_{(\gamma_1,\dots,\gamma_k)}\in\RR$. Since each multiindex $(\gamma_1,\dots,\gamma_k)$ with
$0\leq \gamma_j\leq \alpha_j$ can be uniquely associated with a divisor $m$ of $N$ given by
$m=p_1^{\gamma_1}\dots p_k^{\gamma_k}$, we will use such divisors to index the entries of $\bbA$, so that
$$
\bbA = ( \bbA_m )_{m|N}, \ \ \bbA_m = \bbA_{(\gamma_1,\dots,\gamma_k)}
\hbox{ for } m=p_1^{\gamma_1}\dots p_k^{\gamma_k}.
$$
\end{definition}

For any $N|M$, $N$-boxes form a vector space over $\RR$, with addition of boxes and 
multiplication of a box by a scalar defined in the obvious way. 
We also equip this space with an inner product structure as follows.

\begin{definition}\label{def-inner-product} {\bf (Box product)}
If $\bbA$ and $\bbB$ are $N$-boxes, define
\begin{equation}\label{inner-product}
\langle \bbA, \bbB \rangle = \sum_{m|N} \frac{1}{\phi(N/m)} \bbA_m \bbB_m.
\end{equation}
\end{definition}

Of course, (\ref{inner-product}) depends on $N$, but since $N$ is 
determined by the fact of $\bbA$ and $\bbB$ being $N$-boxes, we will not use additional subscripts or superscripts to indicate that.

The $N$-boxes associated with multisets in $\ZZ_N$ are as follows. 

\begin{definition}\label{N-box-A} {\bf (Boxes associated with multisets)}
Let $A\in\calm(\ZZ_M)$ and $N|M$. 
Consider the induced multiset $A\in\calm(\ZZ_N)$, with the weight function mod $N$ defined in (\ref{induced-weights}).
For $x\in\ZZ_N$, define
$\bbA^N[x] = (\bbA^N_m[x])_{m|N}$, 
where
\begin{align*}
\bbA^N_m[x] & = \sum_{a\in \ZZ_N:\, (x-a,N)=m} w^N_A(a).
\end{align*}
In particular, if $A\subset\ZZ_N$ is a set, we have
\begin{align*}
\bbA^N_m[x] & = \# \{a\in A:\ (x-a,N)=m \}.
\end{align*}
If $N=M$, we will skip the superscript and write $\bbA_m[x]=\bbA^M_m[x]$ whenever there is no possibility of confusion.
\end{definition}

Theorem \ref{ortho-lemma} below explains the reason for Definition \ref{def-inner-product}. 
The theorem is based on \cite[Theorem 1]{GLW} (see Sections \ref{GLW-section} and \ref{proofoftheorem3}
for details). 
The equivalence between $A\oplus B=\ZZ_M$ and the condition in (ii) provides an alternative proof of 
Theorem \ref{thm-sands};
however, Sands's proof is
easier and does not require Theorem \ref{ortho-lemma}.
The point of Theorem \ref{ortho-lemma} is that
tiling also implies the formally stronger condition (\ref{e-ortho}) for all $N|M$ and $x,y\in\ZZ_M$.

\begin{theorem}\label{ortho-lemma} {\bf (Box product characterization of tiling)}
(i) Suppose that $A\oplus B=\ZZ_M$ is a tiling. Then for any $N|M$, and for any $x,y\in\ZZ_M$, we have
\begin{equation}\label{e-ortho}
\langle \bbA^N[x], \bbB^N[y] \rangle =\frac{|A||B|}{N}= \frac{M}{N}.
\end{equation}
In particular, 
\begin{equation}\label{e-ortho2}
\langle \bbA^M[x], \bbB^M[y] \rangle =1\ \ \forall x,y\in\ZZ_M.
\end{equation}

(ii) Conversely, suppose that $A,B\subset \ZZ_M$ are sets such that $|A||B|=M$ and
$\langle \bbA^M[a], \bbB^M[b] \rangle =1$
for all $a\in A$ and $b\in B$. Then $A\oplus B=\ZZ_M$.
\end{theorem}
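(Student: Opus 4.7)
My plan is to reduce both parts to a Fourier calculation on $\ZZ_N$, with Ramanujan sums supplying the key orthogonality. Throughout, write $\omega = e^{2\pi i / N}$, let $c_d(\xi) = \sum_{v \in (\ZZ_d)^*} \omega_d^{v\xi}$ denote the Ramanujan sum, and set $d_\xi := N/(\xi, N)$ (the order of $\omega^\xi$ as a root of unity). The starting point is the Fourier identity
$$\mathbf{1}[(u, N) = m] \;=\; \frac{1}{N}\sum_{\xi \in \ZZ_N} c_{N/m}(\xi)\,\omega^{u\xi},$$
which follows because $\{u \in \ZZ_N : (u, N) = m\} = \{mv : v \in (\ZZ_{N/m})^*\}$ has Fourier transform exactly $c_{N/m}(\xi)$. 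Applying this to $u = x - a$ and summing against the induced weights yields
$$\bbA^N_m[x] \;=\; \frac{1}{N}\sum_{\xi \in \ZZ_N} c_{N/m}(\xi)\,\omega^{x\xi}\,A(\omega^{-\xi}),$$
with the symmetric formula for $\bbB^N_m[y]$. Substituting into the box product and changing variable $d = N/m$ gives
$$\langle \bbA^N[x], \bbB^N[y]\rangle \;=\; \frac{1}{N^2} \sum_{\xi,\eta \in \ZZ_N} \omega^{x\xi + y\eta}\, A(\omega^{-\xi})\, B(\omega^{-\eta})\, K_N(\xi,\eta),$$
where $K_N(\xi,\eta) := \sum_{d|N} c_d(\xi)\,c_d(\eta)/\phi(d)$.

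The crux is the Ramanujan orthogonality
$$K_N(\xi,\eta) \;=\; \frac{N}{\phi(d_\xi)}\,\mathbf{1}[d_\xi = d_\eta].$$
Multiplicativity of $c_d$ and $\phi$ over coprime arguments factors $K_N$ across the primes dividing $N$, reducing the claim to $N = p^n$. On a prime power, $c_{p^k}(\xi)$ splits into three explicit cases (whether $p^k | \xi$, $v_p(\xi) = k - 1$, or $v_p(\xi) < k - 1$), and the identity follows by a short telescoping computation; taking the product over primes recovers the general case. Substituting the orthogonality into the inner product and collecting terms by common order $d = d_\xi = d_\eta$ rewrites it as
$$\langle \bbA^N[x], \bbB^N[y]\rangle \;=\; \frac{1}{N} \sum_{d|N} \frac{\sigma_A(d;x)\,\sigma_B(d;y)}{\phi(d)}, \qquad \sigma_A(d;x) := \sum_{\xi:\,d_\xi = d} \omega^{x\xi} A(\omega^{-\xi}).$$

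For part (i), the tiling hypothesis finishes things off. For each $d | N$ with $d > 1$, the tiling gives $\Phi_d | A(X) B(X)$, and irreducibility of $\Phi_d$ over $\QQ$ forces $\Phi_d | A(X)$ or $\Phi_d | B(X)$. In either case, every primitive $d$-th root of unity is a root of the divisible polynomial, so $\sigma_A(d;x)$ or $\sigma_B(d;y)$ vanishes identically; only the $d = 1$ term survives (the unique $\xi$ with $d_\xi = 1$ is $\xi = 0$), contributing $\sigma_A(1;x)\,\sigma_B(1;y)/\phi(1) = A(1) B(1) = |A| |B| = M$, and yielding $\langle \bbA^N[x], \bbB^N[y]\rangle = M/N$ as required.

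For part (ii), I would sum the hypothesis $\langle \bbA^M[a], \bbB^M[b]\rangle = 1$ over $(a,b) \in A \times B$, obtaining $|A||B| = M$ on the left. Running the Fourier derivation with $A, B$ summed against themselves (so that $\sum_{a \in A} \bbA^M_m[a] = \frac{1}{M} \sum_\xi c_{M/m}(\xi) |A(\omega^{-\xi})|^2$ by the self-correlation-Parseval identity) and applying the orthogonality again yields
$$M \;=\; \sum_{a,b} \langle \bbA^M[a], \bbB^M[b]\rangle \;=\; \frac{1}{M} \sum_{d | M} \frac{T_A(d)\,T_B(d)}{\phi(d)}, \qquad T_A(d) := \sum_{\xi:\,d_\xi = d} |A(\omega^{-\xi})|^2.$$
The $d = 1$ term alone contributes $|A|^2 |B|^2 / M = M$, so $\sum_{d > 1} T_A(d) T_B(d)/\phi(d) = 0$; non-negativity forces $T_A(d) T_B(d) = 0$ for every $d > 1$, and the irreducibility argument again converts this into $\Phi_d | A$ or $\Phi_d | B$. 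Therefore $\prod_{d > 1,\, d | M} \Phi_d = (X^M - 1)/(X - 1)$ divides $A(X) B(X)$ modulo $X^M - 1$; matching degrees and values at $X = 1$ gives $A(X) B(X) \equiv 1 + X + \cdots + X^{M-1}$, which is the tiling. The main obstacle throughout is the Ramanujan orthogonality, which is precisely what selects the weights $1/\phi(N/m)$ as the right inner product for this problem; everything else is a fairly routine consequence of cyclotomic irreducibility and careful Fourier bookkeeping.
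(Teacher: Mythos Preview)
Your proof is correct and follows essentially the same route as the paper's. The paper first establishes a Fourier identity (Theorem~\ref{main-identity}) by computing $\widehat{\one_{H_m}}(\xi)=G_\xi(N/m)$ with $G_\xi(v)=\sum_{d\mid(v,\xi)}\mu(v/d)d$, then proves the orthogonality $\sum_{v\mid N}G_\xi(v)G_{\xi'}(v)/\phi(v)=N/\phi(N/(\xi,N))$ when $(\xi,N)=(\xi',N)$ and $0$ otherwise (Proposition~\ref{p-mainG}), and deduces both parts exactly as you do. Your $c_d(\xi)$ is precisely their $G_\xi(d)$ --- the classical M\"obius formula for Ramanujan sums --- and your kernel orthogonality $K_N(\xi,\eta)=\tfrac{N}{\phi(d_\xi)}\one[d_\xi=d_\eta]$ is their Proposition~\ref{p-mainG} in different notation; both proofs go by multiplicativity and the prime-power case. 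The only cosmetic difference is that you name the Ramanujan sums, whereas the paper rederives them from scratch; the endgame for part~(ii) (sum over $A\times B$, non-negativity, then cyclotomic divisibility via (\ref{poly-e2})) is identical.
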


\begin{corollary}\label{cor-ortho}
Under the assumptions of Theorem \ref{ortho-lemma},
let $\call^N(A)$ be the linear space spanned by the boxes $\bbA^N[x]$, i.e.
$$
\call^N(A)=\left\{ \sum_{x\in\ZZ_N} c_x \bbA^N[x]:\ c_x\in\RR\right\},
$$
and similarly for $B$. Then for any $N$-boxes $\bbA\in\call^N(A)$ and $\bbB\in\call^N(B)$,
we have 
\begin{equation}\label{e-ortho2b}
\langle \bbA, \bbB \rangle = \frac{1}{N}\Sigma(\bbA) \Sigma(\bbB),
\end{equation}
where $\Sigma(\bbA)=\sum_{m|N} \bbA_m$.
\end{corollary}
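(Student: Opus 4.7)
The plan is to reduce the statement to the case of generators via bilinearity. Both sides of the claimed identity are bilinear in $(\bbA,\bbB)$: the box product $\langle \cdot,\cdot\rangle$ is bilinear by definition, and $\Sigma$ is linear, so the product $\Sigma(\bbA)\Sigma(\bbB)$ is bilinear. It therefore suffices to verify
\begin{equation*}
\langle \bbA^N[x], \bbB^N[y] \rangle \;=\; \tfrac{1}{N}\,\Sigma(\bbA^N[x])\,\Sigma(\bbB^N[y])
\end{equation*}
for all generators $\bbA^N[x]$, $\bbB^N[y]$ with $x,y\in\ZZ_N$ (and then extend by the decompositions $\bbA=\sum_x c_x\bbA^N[x]$, $\bbB=\sum_y d_y\bbB^N[y]$).

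The first step is to compute the total sum $\Sigma(\bbA^N[x])$ for a generator. By Definition~\ref{N-box-A}, the entries $\bbA^N_m[x]$ for $m\mid N$ partition the set $A$ (mod $N$) according to the value $(x-a,N)$; summing over all divisors $m$ therefore recovers the full weight of $A$ in $\ZZ_N$, which since $A$ is a set equals $|A|$:
\begin{equation*}
\Sigma(\bbA^N[x]) \;=\; \sum_{m\mid N} \bbA^N_m[x] \;=\; \sum_{a\in\ZZ_N} w_A^N(a) \;=\; |A|.
\end{equation*}
The analogous computation gives $\Sigma(\bbB^N[y])=|B|$. Consequently $\tfrac{1}{N}\Sigma(\bbA^N[x])\Sigma(\bbB^N[y]) = |A||B|/N = M/N$, which is exactly the value produced by Theorem~\ref{ortho-lemma}(i) for $\langle \bbA^N[x],\bbB^N[y]\rangle$. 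This establishes the identity on generators.

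Finally, assembling the two ingredients, for $\bbA=\sum_x c_x\bbA^N[x]\in\call^N(A)$ and $\bbB=\sum_y d_y\bbB^N[y]\in\call^N(B)$, bilinearity of the box product combined with the generator identity gives
\begin{equation*}
\langle \bbA,\bbB\rangle \;=\; \sum_{x,y} c_x d_y \,\langle \bbA^N[x],\bbB^N[y]\rangle \;=\; \tfrac{M}{N}\sum_{x,y} c_x d_y,
\end{equation*}
while linearity of $\Sigma$ gives $\Sigma(\bbA)=|A|\sum_x c_x$ and $\Sigma(\bbB)=|B|\sum_y d_y$, so $\tfrac{1}{N}\Sigma(\bbA)\Sigma(\bbB)=\tfrac{|A||B|}{N}\sum_{x,y}c_xd_y=\tfrac{M}{N}\sum_{x,y}c_xd_y$, matching the first computation. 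There is no real obstacle here; the content is entirely packaged in Theorem~\ref{ortho-lemma}(i), and the only thing to notice is the elementary fact $\Sigma(\bbA^N[x])=|A|$, which makes the right-hand side of the generator identity independent of $x,y$ and equal to $M/N$ as required.
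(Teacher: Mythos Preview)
Your proof is correct and is precisely the intended argument: the paper states this as a corollary without proof, and the natural derivation is exactly the bilinearity reduction to generators combined with the observation that $\Sigma(\bbA^N[x])=|A|$, which you carry out cleanly. One minor remark: when you justify $\Sigma(\bbA^N[x])=|A|$ by saying ``since $A$ is a set'', note that $A$ mod $N$ need not be a set (the induced weights $w_A^N$ can exceed $1$); the correct reason is that $\sum_{a\in\ZZ_N} w_A^N(a)=\sum_{a'\in\ZZ_M} w_A(a')=|A|$, which holds because $A\subset\ZZ_M$ is a set, not because $A$ mod $N$ is one.
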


$N$-boxes $\bbA^N[x]$, $x\in\ZZ_M$, are a convenient way of encoding structural information about $A$. Theorem \ref{ortho-lemma} provides a tiling criterion for $A\oplus B=\ZZ_M$ in terms of the box product, and it is also possible to express cyclotomic divisibility in terms of $N$-boxes. However, this convenience comes with some loss of information. In \cite{LaLo2}, we have to work with the actual sets $A$ and $B$, not just with the $N$-boxes representing them. We do not know whether it is possible to give a proof of properties such as (T2) purely in terms of the $N$-boxes associated with the sets.

\begin{remark} The equivalence between the conditions (iii) and (iv) in Proposition \ref{replacement} can be stated in terms of $M$-boxes. Suppose that $A\oplus B=\ZZ_M$ is a tiling. Then the following are equivalent:

(i) $B$ satisfies (T2),

(ii) $\sum_{m\in\Div(A^\flat)} \bbB_m[y]=1$ for all $y\in\ZZ_M$.

Indeed, the condition (iv) of Proposition \ref{replacement} can be rewritten as follows: for any $y\in\ZZ_M$, there exist unique $b\in B$ and $a\in A^\flat$ such that $b=y+a$, or equivalently, $y-b=-a$. Since $\{a\in\ZZ_M:\ (-a,M)\in\Div(A^\flat)\}=A^\flat$, this implies the claim.
\end{remark}


\subsection{A Fourier-analytic identity}\label{GLW-section}


Fix $N=p_1^{\alpha_1} \dots p_k^{\alpha_k}$, where $p_1,\dots,p_k$ are distinct primes and $\alpha_1,\dots,\alpha_k\in\NN$. 
Let $A,B,C,D\in\calm(\ZZ_N)$. For $m|N$, we define
$$
\bbA_m^N [C] :=\sum_{c\in C}\bbA^N_m[c] w_C(c) =\sum_{a,c\in\ZZ_N} w_A(a) w_C(c) \one_{(a-c,N)=m}.
$$
In particular, if $A(X)$ is a polynomial with 0-1 coefficients corresponding to a set $A\subset\ZZ_N$, then 
$$
\bbA_m^N[A]= \#\{(a,a')\in A\times A:\ (a-a',N)=m\}.
$$
This defines $N$-boxes in the sense of Definition \ref{N-box-A}, and in particular we may consider the box product
$$
\langle \bbA^N[C], \bbB^N[D] \rangle = \sum_{m|N} \frac{1}{\phi(N/m)}  \bbA_m^N[C] \bbB_m^N[D].
$$ 
 
The following theorem is a slight generalization of the main identity from \cite{GLW}. 
Specifically, Theorem 1 in \cite{GLW} states that (\ref{id-1}) holds when $A(X)$ and $B(X)$
are polynomials corresponding to multisets $A,B\subset\ZZ_N$. We will need an extension
of it to 4 polynomials, not necessarily with non-negative coefficients.
The proof is essentially the same, but since \cite{GLW} remains unpublished, we include it here for completeness.

\begin{theorem}\label{main-identity}
Let $A(X),B(X),C(X),D(X)$ be polynomials modulo $X^N-1$ with integer coefficients. Then
\begin{equation}\label{id-1var}
\langle \bbA^N[C], \bbB^N[D] \rangle=\sum_{d|N} \frac{1}{N\phi(d) } 
\left[ \sum_{\zeta:\Phi_d(\zeta)=0} A(\zeta)\overline{C(\zeta)} \right]
\left[ \sum_{\zeta:\Phi_d(\zeta)=0} B(\zeta)\overline{D(\zeta)} \right].
\end{equation}
In particular, 
\begin{equation}\label{id-1}
\langle \bbA^N[A], \bbB^N[B] \rangle
=\sum_{d|N} \frac{1}{N\phi(d) } \cale_d(A) \cale_d(B),
\end{equation}
where
$$
\cale_d(A)=\sum_{\zeta:\Phi_d(\zeta)=0} |A(\zeta)|^2.
$$
\end{theorem}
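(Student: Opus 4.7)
The plan is to prove the identity by Fourier analysis on $\ZZ_N$: recognize that $\bbA_m^N[C]$ is an inner product of a difference-count function against the indicator of the ``gcd class'' $\{y:(y,N)=m\}$, compute Fourier transforms, and reduce to an arithmetic orthogonality relation for Ramanujan sums that precisely matches the weight $1/\phi(N/m)$ in the definition of the box product.

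\textbf{Step 1 (Fourier representation of $\bbA_m^N[C]$).} Set $\omega=e^{2\pi i/N}$ and $I_m(y)=\one_{(y,N)=m}$. Define the weighted pair-count $R_{A,C}(y)=\sum_{a-c=y}w_A(a)w_C(c)$, so that $\bbA_m^N[C]=\sum_{y\in\ZZ_N}R_{A,C}(y)\,I_m(y)$. A direct computation gives $\widehat{R_{A,C}}(j)=\overline{A(\omega^j)}\,C(\omega^j)$, while
$$
\widehat{I_m}(j)=\sum_{y:(y,N)=m}\omega^{-jy}=\sum_{y':(y',N/m)=1}\omega_{N/m}^{-jy'}=c_{N/m}(j),
$$
where $c_q(j)$ is the Ramanujan sum and $\omega_{N/m}=\omega^m$. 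Since $c_{N/m}$ is real, Plancherel yields
$$
\bbA_m^N[C]=\frac{1}{N}\sum_{j=0}^{N-1}A(\omega^j)\,\overline{C(\omega^j)}\,c_{N/m}(j).
$$

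\textbf{Step 2 (Substitute and swap sums).} Plugging the analogous expression for $\bbB_m^N[D]$ into $\langle\bbA^N[C],\bbB^N[D]\rangle=\sum_{m\mid N}\phi(N/m)^{-1}\bbA_m^N[C]\bbB_m^N[D]$, changing variables $q=N/m$ and interchanging sums gives
$$
\langle\bbA^N[C],\bbB^N[D]\rangle=\frac{1}{N^2}\sum_{j,j'=0}^{N-1}F(j)G(j')\sum_{q\mid N}\frac{c_q(j)c_q(j')}{\phi(q)},
$$
where $F(j)=A(\omega^j)\overline{C(\omega^j)}$ and $G(j')=B(\omega^{j'})\overline{D(\omega^{j'})}$.

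\textbf{Step 3 (Ramanujan-sum orthogonality).} The key arithmetic input is
$$
\sum_{q\mid N}\frac{c_q(j)c_q(j')}{\phi(q)}=\frac{N}{\phi(N/\gcd(j,N))}\,\one_{\gcd(j,N)=\gcd(j',N)}.
$$
I would derive this as a dual of the standard column orthogonality $\sum_{j\in\ZZ_N}c_d(j)c_{d'}(j)=N\phi(d)\delta_{d,d'}$, which itself follows from character orthogonality on $\ZZ_N$ together with the identity $c_d(j)=\sum_{\chi:\mathrm{ord}\,\chi=d}\chi(j)$. Since $c_q(j)$ depends on $j$ only through $\gcd(j,N)$, the ``compressed'' matrix $\tilde C_{a,d}=c_d(j_a)$ (any $j_a$ with $\gcd(j_a,N)=a$) is a square matrix, each row of the original $N\times\#\{d:d\mid N\}$ matrix being repeated $\phi(N/a)$ times. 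The column relation then reads $\tilde C^{\,T}\mathrm{diag}(\phi(N/a))\tilde C=N\,\mathrm{diag}(\phi(d))$; since both diagonal matrices are invertible, $\tilde C$ is invertible and inverting yields $\tilde C\,\mathrm{diag}(\phi(d)^{-1})\,\tilde C^{\,T}=N\,\mathrm{diag}(\phi(N/a)^{-1})$, which is exactly the required identity.

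\textbf{Step 4 (Regroup by cyclotomic order).} Feeding Step~3 into Step~2 leaves a sum over pairs $(j,j')$ with $\gcd(j,N)=\gcd(j',N)$. Since $\omega^j$ is a primitive $d$-th root of unity iff $\gcd(j,N)=N/d$, partitioning the $j$-sum by $d=N/\gcd(j,N)$ converts $\sum_{j:\gcd(j,N)=N/d}A(\omega^j)\overline{C(\omega^j)}$ into $\sum_{\zeta:\Phi_d(\zeta)=0}A(\zeta)\overline{C(\zeta)}$, giving (\ref{id-1var}). Specializing $C=A$, $D=B$ produces (\ref{id-1}) via $A(\zeta)\overline{A(\zeta)}=|A(\zeta)|^2$.

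The main obstacle is Step~3: Plancherel and the Fourier inversion in Steps~1--2 are mechanical, but the orthogonality of Ramanujan sums ``in the $q$ variable'' is the substantive arithmetic content of the identity. The rest of the proof is essentially bookkeeping that matches the weights $1/\phi(N/m)$ in the definition of the box product with the weights $1/(N\phi(d))$ on the cyclotomic side.
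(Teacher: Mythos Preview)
Your proof is correct and follows the same overall architecture as the paper: express $\bbA_m^N[C]$ via Fourier transform of the indicator of $\{y:(y,N)=m\}$, substitute into the box product, swap sums, apply an orthogonality relation in the $m$ (equivalently $q=N/m$) variable, and regroup by cyclotomic order. The paper's function $G_\xi(v)$ is precisely the Ramanujan sum $c_v(\xi)$, so Steps~1, 2, and 4 match the paper essentially line for line.

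The genuine difference is in Step~3, your Ramanujan-sum orthogonality versus the paper's Proposition~\ref{p-mainG}. The paper proves this by first establishing that $G_\xi(v)$ is multiplicative in $v$, then computing $G_\xi(p_i^j)$ explicitly, and finally carrying out a three-case prime-power computation of $\sum_j \phi(p_i^j)^{-1}G_\xi(p_i^j)G_{\xi'}(p_i^j)$. Your route is more conceptual: you invoke the standard column orthogonality $\sum_{j\in\ZZ_N}c_d(j)c_{d'}(j)=N\phi(d)\delta_{d,d'}$, compress the matrix $(c_d(j))$ to a square matrix indexed by divisors of $N$ using that $c_d(j)$ depends only on $(j,N)$, and then invert the resulting relation $\tilde C^{T}\mathrm{diag}(\phi(N/a))\tilde C=N\,\mathrm{diag}(\phi(d))$ to obtain the row orthogonality. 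This matrix-duality argument is clean and avoids the paper's case analysis, at the cost of relying on the column orthogonality as a black box (which itself is an easy consequence of character orthogonality). Both approaches yield exactly the same identity; yours makes the connection to classical Ramanujan-sum theory explicit, while the paper's is self-contained and elementary.
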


The rest of this section is devoted to the proof of Theorem \ref{main-identity}.
We will use the discrete Fourier transform in $\ZZ_N$:
if $f:\ZZ_N\to \CC$ is a function, then
$$
\widehat{f}(\xi)= \sum_{x\in\ZZ_N} f(x) e^{2\pi i x\xi/N},\ \ \xi\in\ZZ_N.
$$

\begin{lemma}
Let
$$
\Lambda_m  :=\{x\in\ZZ_N: \ m|x\},
$$
$$
H_m  := \{x\in\ZZ_N: \ (x,N)=m\} = \Lambda_m \setminus \bigcup_{m': \,m|m'|N,\,m'\neq m} \Lambda_{m'}.
$$
Then
$\widehat{\one_{H_m}}(\xi)=G_\xi(N/m)$, where
\begin{equation}\label{e-defG}
G_\xi(v)=\sum_{d|(v,\xi)} \mu(v/d) d.
\end{equation}
\end{lemma}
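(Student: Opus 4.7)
The plan is a two-step computation: first evaluate $\widehat{\one_{\Lambda_m}}$ directly, and then obtain $\widehat{\one_{H_m}}$ via M\"obius inversion over the lattice of divisors of $N/m$.

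\textbf{Step 1: the subgroup transform.} Since $m \mid N$, the set $\Lambda_m = m\ZZ_N = \{0,m,2m,\dots,(N/m-1)m\}$ is a subgroup of $\ZZ_N$ of order $N/m$. A direct geometric-series computation gives
\begin{equation*}
\widehat{\one_{\Lambda_m}}(\xi)
= \sum_{k=0}^{N/m-1} e^{2\pi i k m \xi/N}
= \frac{N}{m}\,\one_{(N/m)\mid \xi},
\end{equation*}
since the common ratio $e^{2\pi i m \xi/N}$ is $1$ precisely when $N \mid m\xi$, i.e.\ when $(N/m)\mid \xi$, and otherwise the sum telescopes to $0$.

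\textbf{Step 2: M\"obius inversion.} Every $x \in \Lambda_m$ has $(x,N) = m'$ for a unique $m'$ with $m \mid m' \mid N$, so
\begin{equation*}
\one_{\Lambda_m} = \sum_{m':\,m\mid m'\mid N} \one_{H_{m'}}.
\end{equation*}
M\"obius inversion on the divisor poset of $N/m$ then yields
\begin{equation*}
\one_{H_m} = \sum_{m':\,m\mid m'\mid N} \mu(m'/m)\,\one_{\Lambda_{m'}}.
\end{equation*}
Taking Fourier transforms and substituting Step~1 gives, with $v=N/m$ and the change of variables $k = m'/m$,
\begin{equation*}
\widehat{\one_{H_m}}(\xi)
= \sum_{k \mid v} \mu(k)\,\frac{v}{k}\,\one_{(v/k)\mid \xi}.
\end{equation*}

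\textbf{Step 3: rewriting in the form $G_\xi(v)$.} Reindex by $d = v/k$ (so $k = v/d$ and $d \mid v$): the indicator $\one_{(v/k)\mid \xi}$ becomes $\one_{d\mid \xi}$, and the factor $v/k$ becomes $d$. The combined condition ``$d \mid v$ and $d \mid \xi$'' is equivalent to $d \mid (v,\xi)$, so
\begin{equation*}
\widehat{\one_{H_m}}(\xi) = \sum_{d \mid (v,\xi)} \mu(v/d)\,d = G_\xi(v) = G_\xi(N/m),
\end{equation*}
as desired.

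The argument is entirely routine; there is no real obstacle beyond bookkeeping. The only step that deserves a moment of care is the index change $d = v/k$ at the end: one must verify that the indicator in Step~1 converts correctly, which depends on the fact that $d \mid v$ is already guaranteed by the M\"obius sum and hence adjoining it to $d \mid \xi$ collapses to $d \mid (v,\xi)$. A sign/normalization convention check on the Fourier transform (the paper uses $\widehat{f}(\xi) = \sum_x f(x) e^{2\pi i x\xi/N}$, not its conjugate) is also worth a sentence, but affects nothing since $\one_{\Lambda_m}$ and $\one_{H_m}$ are real and the subgroup sum is invariant under $\xi \mapsto -\xi$.
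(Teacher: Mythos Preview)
Your proof is correct and follows essentially the same route as the paper's: compute $\widehat{\one_{\Lambda_m}}$ directly as $(N/m)\one_{(N/m)\mid\xi}$, express $\one_{H_m}$ as a M\"obius (inclusion--exclusion) sum of $\one_{\Lambda_{m'}}$, take transforms, and reindex by $d=v/k$ to land on $G_\xi(N/m)$. The only differences are cosmetic (the paper writes $\one_{\Lambda_{N/m}}(\xi)$ where you write $\one_{(N/m)\mid\xi}$, and uses slightly different variable names).
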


\begin{proof}
Using that 
$$
\widehat{\one_{\Lambda_m}} (\xi)= \frac{N}{m} \one_{\Lambda_{N/m}}(\xi),
$$
we get by inclusion-exclusion
\begin{align*}
\widehat{\one_{H_m}}(\xi)
&= \sum_{d|\frac{N}{m}} \mu(d)
\widehat{\one_{\Lambda_{md}}}(\xi) 
\\
&=  \sum_{d|\frac{N}{m}} \mu(d) \,\frac{N}{md} \,
\one_{\Lambda_{N/md}}(\xi) 
\\
&=  \sum_{d'|\frac{N}{m} } \mu\Big( \frac{N}{md'} \Big) d' \, \one_{\Lambda_{d}}(\xi)
\\
&=  \sum_{d'| (\frac{N}{m},\xi) } \mu\Big( \frac{N}{md'} \Big) d' 
\\
&= G_\xi(N/m).
\end{align*}
\end{proof}

\begin{proposition}\label{p-mainG}
We have
\begin{equation}\label{e-mainG}
\sum_{v|N} \frac{1}{\phi(v)} G_\xi(v) G_{\xi'}(v) =
\begin{cases}
\frac{N}{\phi(N/d)} & \hbox{ if } (\xi,N)=(\xi',N)=d
\\
0 & \hbox{  if } (\xi,N)\neq (\xi',N).
\end{cases}
\end{equation}
\end{proposition}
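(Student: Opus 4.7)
The function $G_\xi(v)$ is the classical Ramanujan sum, and (\ref{e-mainG}) is its orthogonality relation. The plan is to reduce to prime-power $N$ by multiplicativity and then compute directly.

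First, I would verify that $v\mapsto G_\xi(v)$ is multiplicative in $v$. If $v=v_1v_2$ with $(v_1,v_2)=1$, then every divisor $d\mid(v,\xi)$ factors uniquely as $d_1d_2$ with $d_i\mid(v_i,\xi)$, and $\mu(v/d)=\mu(v_1/d_1)\mu(v_2/d_2)$, giving $G_\xi(v_1v_2)=G_\xi(v_1)G_\xi(v_2)$. Since $\phi$ is also multiplicative, the summand $G_\xi(v)G_{\xi'}(v)/\phi(v)$ is multiplicative in $v$, so the sum over $v\mid N$ factors:
\[
\sum_{v\mid N}\frac{G_\xi(v)G_{\xi'}(v)}{\phi(v)}=\prod_{p\mid N}\Bigg(\sum_{j=0}^{n_p}\frac{G_\xi(p^j)G_{\xi'}(p^j)}{\phi(p^j)}\Bigg),
\]
where $p^{n_p}\parallel N$. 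Moreover, if $d=\prod_p p^{\alpha_p}$ then $N/\phi(N/d)=\prod_p p^{n_p}/\phi(p^{n_p-\alpha_p})$, so it suffices to evaluate each local factor.

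Second, I would fix a prime $p$ with $p^n\parallel N$, set $\alpha=\min(n,v_p(\xi))$ and $\beta=\min(n,v_p(\xi'))$. Since $(p^j,\xi)=p^{\min(j,\alpha)}$ and $\mu(p^{j-k})$ vanishes unless $j-k\leq 1$, a direct computation gives
\[
G_\xi(p^j)=\begin{cases}\phi(p^j)&\text{if }0\leq j\leq\alpha,\\ -p^\alpha&\text{if }j=\alpha+1\leq n,\\ 0&\text{if }j\geq\alpha+2,\end{cases}
\]
and analogously for $G_{\xi'}(p^j)$ with $\beta$ in place of $\alpha$.

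Third, I would split into cases. If $\alpha<\beta$, the terms with $j\geq\alpha+2$ vanish because $G_\xi(p^j)=0$, and the $p$-factor collapses to
\[
\sum_{j=0}^{\alpha}\phi(p^j)+\frac{(-p^\alpha)\,\phi(p^{\alpha+1})}{\phi(p^{\alpha+1})}=p^\alpha-p^\alpha=0,
\]
so the whole product vanishes whenever $(\xi,N)\neq(\xi',N)$. If $\alpha=\beta<n$, the $p$-factor is $\sum_{j=0}^\alpha\phi(p^j)+p^{2\alpha}/\phi(p^{\alpha+1})=p^\alpha+p^\alpha/(p-1)=p^{\alpha+1}/(p-1)=p^n/\phi(p^{n-\alpha})$, while if $\alpha=\beta=n$ it equals $\sum_{j=0}^n\phi(p^j)=p^n=p^n/\phi(1)$; in both subcases the value is $p^n/\phi(p^{n-\alpha})$. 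Multiplying over $p\mid N$ gives the claimed $N/\phi(N/d)$. The main obstacle is pure bookkeeping at the boundary cases $\alpha=n$ and $\alpha+1=n$, and keeping the formula for $G_\xi(p^j)$ valid uniformly at $j=0$; no new idea is needed beyond recognizing the identity as Ramanujan-sum orthogonality.
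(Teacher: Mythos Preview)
Your proposal is correct and follows essentially the same route as the paper: prove multiplicativity of $v\mapsto G_\xi(v)$, factor the sum over $v\mid N$ into local factors over primes, compute $G_\xi(p^j)$ explicitly, and evaluate each local factor by the same case split on whether the $p$-adic valuations agree. The only addition is your observation that $G_\xi$ is the Ramanujan sum and (\ref{e-mainG}) its orthogonality relation, which is a helpful contextual remark but not used in the argument itself.
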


\begin{proof}
We first claim that for every $\xi$, the function $G_\xi(v)$ is multiplicative in $v$. Indeed, let $(x,y)=1$, $(x,\xi)=t$, $(y,\xi)=s$. Then $(t,s)=1$ and $(xy,\xi)=ts$. Writing $u=u^\prime\cdot u^{\prime\prime}$, $u^\prime|x$ and $u^{\prime\prime}|y$, we get 
$$
G_\xi(xy)=\sum_{u|(xy,\xi)} \mu(\frac{xy}{u}) u=\sum_{u^\prime|(x,\xi),u^{\prime\prime}|(y,\xi)} \mu(\frac{x}{u^\prime}) \mu(\frac{y}{u^{\prime\prime}}) u^\prime u^{\prime\prime}=G_\xi(x)G_\xi(y).
$$
Therefore $G_\xi(v)$ is entirely determined by its values $G_\xi(v)$ on prime powers $p_i^j$, $j=0,1,\dots,\alpha_i$, $i=1,\dots,k$. 
Let $\xi=p_1^{\nu_1}\dots p_k^{\nu_k}$.
Then $p_i^\kappa \parallel(p_i^j,\xi)$ for $\kappa=\min(j,\nu_i)$. If $j=0$, we have $\kappa=0$ and $G_\xi(p_i^0)=G_\xi(1)=1$.
If $j\geq 1$, we have
$$
G_\xi(p_i^j)=\sum_{u|p_i^\kappa}\mu\left(\frac{p_i^j}{u}\right) u=
\begin{cases}
p_i^j -p_i^{j-1} & \hbox{ if } \kappa=j\\
-p_i^{j-1} &   \hbox{ if }    \kappa=j-1\\
0 &   \hbox{ if }    \kappa <j
\end{cases}	
$$
which is equivalent to
\begin{equation}\label{G-e2}
G_\xi(p_i^j)=
\begin{cases}
p_i^j -p_i^{j-1}=\phi(p_i^j) & \hbox{ if } j\leq \nu_i\\
-p_i^{\nu_i} &   \hbox{ if }     j=\nu_i+1\\
0 &   \hbox{ if }      j>\nu_i+1
\end{cases}
\end{equation}
Next, if $F$ is a multiplicative function, then 
\begin{align*}
\sum_{v|N}F(v) 
=\sum_{0\leq j_1\leq \alpha_1,\, \ldots,\, 0\leq j_k\leq \alpha_k} F(p_{1}^{j_1}\ldots p_{k}^{j_k})
=\prod_{i=1}^k \left(\sum_{j_i=0}^{\alpha_i}F(p_i^{j_i})\right)
\end{align*}
Applying this with $F(v)=G_\xi(v)G_{\xi'}(v)$ for fixed $\xi,\xi'$, we get
\begin{equation}\label{G-e1}
\sum_{v|N} \frac{1}{\phi(v)} G_\xi (v) G_{\xi^\prime} (v)
=\prod_{i=1}^k \left( \sum_{j_i=0}^{\alpha_i} \frac{1}{\phi(p_i^{j_i})}G_\xi (p_i^{j_i})G_{\xi^\prime} (p_i^{j_i}) \right)
\end{equation}

Fix a prime divisor $p_i|N$, and consider the corresponding factor in (\ref{G-e1}).
Suppose that $0\leq \nu,\nu' \leq \alpha_i$ are such that $p_i^\nu\parallel \xi$, $p_i^{\nu'} \parallel \xi^\prime$. Without loss of generality, we may assume that $\nu\leq\nu'$. In accordance with
(\ref{G-e2}), we have three cases.
 \begin{itemize}
	\item If $\nu<\nu'\leq \alpha_i$, then $\nu+1\leq \alpha_i$ and
	\begin{align*}
	\sum_{j=0}^{\alpha_i} \frac{1}{\phi(p_i^j)} G_\xi(p_i^j) G_{\xi^\prime}(p_i^j) 
	& =1+\sum_{j=1}^{\nu} \frac{1}{\phi(p_i^j)} \phi(p_i^j) (p_i^j -p_i^{j-1})
	+\frac{1}{\phi(p_i^{\nu+1})} \phi(p_i^{\nu+1}) (-p_i^n)\\
	& = 1+\sum_{j=1}^{\nu} (p_i^j -p_i^{j-1}) -p^\nu=0
	\end{align*}	
	\item If $\nu=\nu' < \alpha_i$, then $\nu+1=\nu' +1 \leq \alpha_i$ and
	$$
	\sum_{j=0}^{\alpha_i} \frac{1}{\phi(p_i^j)} G_\xi(p_i^j) G_{\xi^\prime}(p_i^j) 
	=1+\sum_{j=1}^{\nu} (p_i^j-p_i^{j-1})+\frac{1}{p_i^{\nu+1}-p_i^\nu} (-p_i^\nu)^2=\frac{p_i^{\nu+1}}{p_i-1}
	$$
	\item If $\nu=\nu'=\alpha_i$, then
	$$
	\sum_{j=0}^{\alpha_i} \frac{1}{\phi(p_i^j)} G_\xi(p_i^j) G_{\xi^\prime}(p_i^j) 
	=1+\sum_{j=1}^{\alpha_i} (p_i^j-p_i^{j-1})=p_i^{\alpha_i}
	$$
\end{itemize}
Since
$$
\frac{p_i^{\alpha_i}}{\phi(p_i^{\alpha_i-\nu})}= 
\begin{cases}
p^{\alpha_i} & \hbox{ if }\nu=\alpha_i \\
\frac{p^{\alpha_i}}{(p_i-1)p_i^{\alpha_i-\nu-1}} = \frac{p_i^{\nu+1}}{p_i-1} & \hbox{ if } \nu<\alpha_i
\end{cases}
$$
we conclude that
$$
\sum_{j=0}^{\alpha_i} \frac{1}{\phi(p_i^j)} G_\xi(p_i^j) G_{\xi^\prime}(p_i^j)= \begin{cases}
0 & \hbox{ if }\nu\neq\nu'\\
\frac{p_i^{\alpha_i}}{\phi(p_i^{\alpha_i-\nu})}
& \hbox{ if } \nu=\nu'\\
\end{cases}
$$	
We now plug this into (\ref{G-e1}), and, since $p_i$ is no longer fixed, write $\nu_i$ and $\nu'_i$ instead of $\nu$ and $\nu'$. 
If $(\xi,N)\neq(\xi^\prime,N)$, then 
$\nu_i\neq \nu'_i$ for at least one $p_i$, so that  
$$
\sum_{v|N} \frac{1}{\phi(v)} G_\xi (v) G_{\xi^\prime} (v)=0.
$$
If on the other hand $(\xi,N)=(\xi^\prime,N)$, we get
$$
\sum_{v|N} \frac{1}{\phi(v)} G_\xi (v) G_{\xi^\prime} (v)
=\prod_{i=1}^k \frac{p_i^{\alpha_i} }{\phi(p_i^{\alpha_i-\nu_i})}
=\frac{N}{\phi(N/(\xi,N))},
$$
which proves the proposition.
\end{proof}

In order to finish the proof of Theorem \ref{main-identity}, we write
\begin{equation*}
\begin{split}
 \bbA^N_m[C]
 & =\sum_{x,y,z\in\ZZ_N} w_A(x)w_C(y)\mathbf{1}_{H_m}(z)\mathbf{1}_{x-y=z}
\\
& =\frac{1}{N}\sum_{x,y,z\in\ZZ_N} w_A(x)w_C(y)\mathbf{1}_{H_m}(z) \sum_{\xi \in \ZZ_N} e^{-2\pi i \xi (x-y+z)/N}
\\
& = \frac{1}{N} \sum_{\xi \in \ZZ_N}  \widehat{w_A}(\xi)   \overline{ \widehat{w_C}(\xi) }
\widehat{\mathbf{1}_{H_m}}(\xi)
\\
& = \frac{1}{N} \sum_{\xi \in \ZZ_N}  \widehat{w_A}(\xi)   \overline{ \widehat{w_C}(\xi) }
G_\xi (N/m)
\end{split}
\end{equation*}
Therefore
\begin{align*}
\langle \bbA^N[C], \bbB^N[D] \rangle
& =\frac{1}{N^2} \sum_{v|N} \frac{1}{\phi(v)} \left(\sum_{\xi \in \ZZ_N}
 \widehat{w_A}(\xi)   \overline{ \widehat{w_C}(\xi) }
G_\xi (v)\right)\left(\sum_{\xi^\prime \in \ZZ_N}
 \widehat{w_B}(\xi')   \overline{ \widehat{w_D}(\xi') }
 G_{\xi^\prime} (v)\right)
\\
& = \frac{1}{N^2} \sum_{\xi, \xi^\prime \in \ZZ_N}
 \widehat{w_A}(\xi)   \overline{ \widehat{w_C}(\xi) }
  \widehat{w_B}(\xi')   \overline{ \widehat{w_D}(\xi') }
   \left[\sum_{v|N} \frac{1}{\phi(v)} G_\xi (v) G_{\xi^\prime} (v)\right]
\end{align*}

By Proposition \ref{p-mainG},  
\begin{align*}
\langle \bbA^N[C], \bbB^N[D] \rangle
& =\frac{1}{N^2} \sum_{d|N} \frac{N}{\phi(N/d)} \left(\sum_{\xi: (\xi,N)=d}
 \widehat{w_A}(\xi)   \overline{ \widehat{w_C}(\xi) } \right)
 \left(\sum_{\xi^\prime: (\xi^\prime,N)=d}
   \widehat{w_B}(\xi')   \overline{ \widehat{w_D}(\xi') } \right),
\end{align*}
which is (\ref{id-1}), since $ \widehat{w_A}(\xi)  = A(e^{-2\pi i \xi/N})$ and 
$\zeta= e^{-2\pi i \xi/N}$ is a root of $\Phi_d(X)$ if and only if $(\xi,N)=N/d$.


\subsection{Proof of Theorem \ref{ortho-lemma}}\label{proofoftheorem3}

(i) Assume that $A\oplus B=\ZZ_M$. Let $N|M$, and let $C=\{x\}$ and $D=\{y\}$ for $x,y\in\ZZ_N$. By (\ref{id-1var}), we have
\begin{align*}
\langle \bbA^N[x], \bbB^N[y] \rangle 
=\sum_{d|N} \frac{1}{N \phi(d) } 
\left[ \sum_{\zeta:\Phi_d(\zeta)=0} A(\zeta)\overline{C(\zeta)} \right]
\left[ \sum_{\zeta:\Phi_d(\zeta)=0} B(\zeta)\overline{D(\zeta)} \right].
\\
\end{align*}
If $d\neq1$, then $\Phi_d(X)$ divides at least one of $A(X)$ and $B(X)$. Hence the only non-zero contribution is 
from $d=1$, which yields
$$
\langle \bbA^N[x], \bbB^N[y] \rangle = \frac{1}{N} A(1)C(1)B(1)D(1) = \frac{|A| |B|}{N}.
$$
This proves (\ref{e-ortho}).

(ii) Suppose that $A,B\subset\ZZ_M$ satisfy $\langle \bbA^M[a], \bbB^M[b] \rangle=1$
for all $a\in A,b\in B$. Then
\begin{align*}
\langle \bbA^M[A], \bbB^M[B] \rangle
&=  \sum_{a\in A,b\in B} \langle \bbA^M[a], \bbB^M[b] \rangle\\
&=  \sum_{a\in A,b\in B} 1  = |A| |B|=M.
\end{align*}
By (\ref{id-1}), this implies that 
$$
\sum_{d|M} \frac{1}{\phi(d) } \cale_d(A) \cale_d(B) = M^2.
$$
However, we are also assuming that $\cale_1(A) \cale_1(B)= |A|^2 |B|^2 = M^2$. Hence
$\cale_d(A) \cale_d(B)=0$ for all $d|M$, $d\neq 1$, so that $A\oplus B=\ZZ_M$ as claimed.

\medskip

\begin{remark}\label{sands-proof}
We indicate a proof of Theorem \ref{thm-sands} based on (\ref{id-1}). Suppose that $A\oplus B=\ZZ_M$ is a tiling, and apply (\ref{id-1}) with $N=M$. Since $\Phi_d(X)|A(X)B(X)$ for all $d|M$, $d\neq 1$, we get
$$
\sum_{m|M}\frac{1}{\phi(M/m)} \bbA^M_m[A]\bbB^M_m[B] =
\frac{|A(1)|^2 |B(1)|^2}{M}=M.
$$
But we also have $\bbA^M_M[A]\bbB^M_M[B]=|A||B|=M$. Hence all other terms $\bbA^M_m[A]\bbB^M_m[B]$ with $m\neq M$ must vanish. This proves Theorem \ref{thm-sands}.

\end{remark}


\section{Cuboids}
\label{gen-cuboids}



\subsection{Definitions}


\begin{definition}
\label{def-gen-cuboids}

Let $M=\prod_{i=1}^K p_i^{n_i}$. 

\smallskip

(i) A {\em cuboid type} $\calt$ on $\ZZ_N$ is an ordered triple $\calt=(N,\vec{\delta}, T)$, where:

\begin{itemize}
\item $N=\prod_{i=1}^K p_i^{n_i-\alpha_i}$ is a divisor of $M$, with $0\leq \alpha_i\leq n_i$ for each $i=1,\dots,K$,

\item $\vec{\delta}=(\delta_1,\dots,\delta_K)$, with $0\leq \delta_i\leq n_i-\alpha_i$ for $i=1,\dots,K$, 

\item $T\subset\ZZ_N$ is a nonempty set.

\end{itemize}

We will refer to $N$ as the {\em scale} of $\calt$, and to $T$ as its {\em template}.
We also define $\mathfrak{J} =\mathfrak{J}_{\vec\delta} := \{j: \delta_j\neq 0\}$.

\medskip

(ii) Let $\calt =(N,\vec{\delta}, T)$ be a cuboid type as above. A    
{\em cuboid} $\Delta$ of type $\calt$ is a weighted multiset corresponding to a mask polynomial of the form
\begin{equation}\label{gc2}
\Delta(X)= X^c\prod_{j\in \mathfrak{J}} (1-X^{d_j}),
\end{equation}
where $c,d_j$ are elements of $\ZZ_M$ such that $(d_j,N)=N/p_j^{\delta_j}$.
The {\em vertices} of $\Delta$ are the points
\begin{equation}\label{gc1}
x_{\vec\epsilon}=c+\sum_{j\in \mathfrak{J}} \epsilon_jd_j:\ \vec{\epsilon} = (\epsilon_j)_{j\in \mathfrak{J}} \in\{0,1\}^{|\mathfrak{J}|},
\end{equation}
with weights
$w_\Delta(x_{\vec\epsilon})=(-1)^{\sum_{j\in \mathfrak{J}}\epsilon_j}$.

\medskip  
(iii) 
Let $A\in\calm(\ZZ_N)$, and let $\Delta$ be a cuboid of type $\calt$. Then the {\em $(\Delta,T)$-evaluation of $A$} is
$$
\bbA^\calt [\Delta] = \bbA^N_N[\Delta*T]= \sum_{  \vec\epsilon\in\{0,1\}^k} w_\Delta(x_{\vec\epsilon}) \bbA^N_N[x_{\vec\epsilon}*T],
$$
where we recall that $x*T=\{x+t: \ t\in T\}$, so that
$$
\bbA^N_N[x_{\vec\epsilon}*T]:= \sum_{t\in T} \bbA^N_N[x_{\vec\epsilon}+t].$$
\end{definition}

For consistency, we will also write
$$
\bbA^\calt [x] = \bbA^N_N[x*T],\ \ x\in\ZZ_M.
$$
In some situations, it will be easier to write out $\Delta$ in its polynomial form. We will then identify the polynomial $\Delta(X)$ with the corresponding weighted multiset $\Delta$, and write $\bbA^\calt [\Delta(X)]$ instead of $\bbA^\calt [\Delta]$.

\begin{definition}
\label{def-null}
Let $A\in\calm(\ZZ_M)$, and let $\mathcal{T} = (N,\vec\delta,T)$ be a cuboid type
 as above. We will say that 
{\em $A$ is $\mathcal{T}$-null} if for every cuboid $\Delta$ of type $\mathcal{T}$, 
\begin{equation}\label{tnull}
\bbA^\calt [\Delta] =0.
\end{equation}
\end{definition}

\begin{lemma}\label{cub-div}
Let $A\in\calm(\ZZ_M)$, and let $\mathcal{T} = (N,\vec\delta,T)$ be a cuboid type. 
Suppose that
for all $m|N$, the cyclotomic polynomial $\Phi_m(X)$ divides at least one of $A(X)$, $T(X)$, or $1-X^{N/p_j^{\delta_j}}$ 
for some $j\in \mathfrak{J}(\vec\delta)$. Then $A$ is $\calt$-null.
\end{lemma}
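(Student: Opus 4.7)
The approach is to expand $\bbA^\calt[\Delta]$ as a sum over $N$-th roots of unity and check that every term vanishes under the stated hypothesis.

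First I would unpack the definition. Since $\bbA^N_N[x] = w_A^N(x)$ for $x \in \ZZ_N$ by Definition \ref{N-box-A}, we have
\[
\bbA^\calt[\Delta] \;=\; \bbA^N_N[\Delta * T] \;=\; \sum_{x \in \ZZ_N} w_A^N(x)\, w_{\Delta * T}^N(x).
\]
Viewing the right side as the constant coefficient of a product polynomial modulo $X^N - 1$ and averaging over $N$-th roots of unity (this is the specialization of the Fourier calculation behind Theorem \ref{main-identity} to the case when the $B,D$ factors are taken to be the constant polynomial $1$) yields the diagonal expansion
\[
\bbA^\calt[\Delta] \;=\; \frac{1}{N}\sum_{d\mid N}\;\sum_{\zeta:\,\Phi_d(\zeta)=0} A(\zeta)\,\Delta(\zeta^{-1})\,T(\zeta^{-1}).
\]

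Next I would argue term-by-term that each inner sum vanishes. Fix $d \mid N$ and a primitive $d$-th root of unity $\zeta$. If $\Phi_d(X) \mid A(X)$, then $A(\zeta) = 0$ directly. If $\Phi_d(X) \mid T(X)$, then $T(\zeta^{-1}) = 0$, since $\zeta^{-1}$ is also a primitive $d$-th root of unity. In the remaining case $\Phi_d(X) \mid 1 - X^{N/p_j^{\delta_j}}$ for some $j \in \mathfrak{J}$, the factorization $X^k - 1 = \prod_{e \mid k}\Phi_e(X)$ gives $d \mid N/p_j^{\delta_j}$. Combined with the defining relation $(d_j, N) = N/p_j^{\delta_j}$, this yields $d \mid d_j$, hence $\zeta^{-d_j} = 1$, so the factor $1 - \zeta^{-d_j}$ inside $\Delta(\zeta^{-1}) = \zeta^{-c}\prod_{i \in \mathfrak{J}}(1 - \zeta^{-d_i})$ vanishes, forcing $\Delta(\zeta^{-1}) = 0$.

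The only step requiring a moment of care is the bookkeeping in this last case: translating the polynomial divisibility $\Phi_d \mid 1 - X^{N/p_j^{\delta_j}}$ into the vanishing of a specific factor of $\Delta(\zeta^{-1})$, by way of the gcd hypothesis $(d_j, N) = N/p_j^{\delta_j}$. Once that identification is made, the lemma follows by a routine term-by-term vanishing argument on the Fourier side and no further obstacles arise.
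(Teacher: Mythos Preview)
Your proof is correct and follows essentially the same route as the paper. The paper's proof is a one-liner invoking Theorem~\ref{main-identity} with $C(X)=\Delta(X)T(X)$ and $B=D=\{0\}$; you have unpacked exactly that specialization (the $B=D=\{0\}$ choice collapses the box-product side to $\bbA^N_N[\Delta*T]$ and the Fourier side to your single sum) and then carried out the term-by-term vanishing check that the paper leaves implicit.
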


\begin{proof}
This follows e.g. from 
Theorem \ref{main-identity} applied to $A(X)$ and $C(X)=\Delta(X)T(X)$, with $B=D=\{0\}$.
\end{proof}


\subsection{Classic cuboids}\label{classic-cuboids}


\begin{definition}
\label{def-N-cuboids}
An {\em $N$-cuboid} is a cuboid of type $\calt=(N,\vec{\delta},T)$, where 
$N|M$, $T(X)=1$, and $\delta_j=1$ for all $j$ such that $p_j|N$. Thus, $N$-cuboids have the form
$$\Delta(X)= X^c\prod_{p_j|N} (1-X^{\rho_jN/p_j})$$
 with $(\rho_j,p_j)=1$ for all $j$, and the associated $\Delta$-evaluation of a multiset $A\in\calm(\ZZ_N)$ is
$$
\bbA^N_N[\Delta] = \sum_{  \vec\epsilon\in\{0,1\}^{|\mathfrak{J}|}} w_\Delta(x_{\vec\epsilon}) \bbA^N_N[x_{\vec\epsilon}],
$$
where $\mathfrak{J}=\{j:\ p_j|N\}$ and the cuboid vertices $x_{\vec\epsilon}$ are defined in (\ref{gc1}). If $\calt$ is as above and $A\in\calm(\ZZ_N)$ is $\calt$-null, we will also say for short that $A$ is $N$-null.
\end{definition}

\begin{figure}[h]
\captionsetup{justification=centering}
\includegraphics[scale=1]{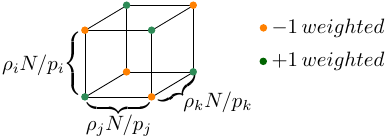}
\caption{An $N$-cuboid with $N$ having 3 prime factors}
\end{figure}

The geometric interpretation of $N$-cuboids $\Delta$ is as follows. With notation as in Definition \ref{def-N-cuboids}, 
recall that $D(N)=N/\prod_{j\in \mathfrak{J}}p_j$. Then the vertices 
$x_{\vec\epsilon}$ of $\Delta$ form a full-dimensional rectangular box in the grid $\Lambda(c,D(N))$, with one vertex at $c$ and alternating $\pm 1$ weights. We reserve the term ``$N$-cuboid", without cuboid type explicitly indicated, to refer to cuboids as in Definition \ref{def-N-cuboids}; for cuboids of any other type, we will always specify $\calt$.

The following cyclotomic divisibility test has been known and used previously in the literature,
see e.g.  \cite[Section 3]{Steinberger} in the context of vanishing sums of roots of unity, or \cite[Section 3]{KMSV} and \cite{KMSV2} with applications to the ``spectral implies tiling" direction of Fuglede's conjecture.

\begin{proposition}\label{cuboid}
Let $A\in\calm(\ZZ_N)$. Then the following are equivalent:

\medskip

(i) $\Phi_N(X)|A(X)$,

\medskip

(ii) For all $N$-cuboids $\Delta$, we have
\begin{equation}\label{id-3a}
\bbA^N_N[\Delta]=0.
\end{equation}

\end{proposition}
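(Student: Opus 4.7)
The plan is to invoke the Fourier-analytic identity \eqref{id-1var} from Theorem \ref{main-identity} and then analyze at which $N$-th roots of unity the cuboid factor $\Delta(\zeta)$ fails to vanish. I would apply \eqref{id-1var} with $C(X)=\Delta(X)$ and $B(X)=D(X)=1$, i.e.\ $B=D=\{0\}$. On the left-hand side, $\bbB^N[0]$ is concentrated at the coordinate $m=N$ with value $1$, so $\langle\bbA^N[\Delta],\bbB^N[0]\rangle=\bbA^N_N[\Delta]$. On the right-hand side, $\sum_{\zeta:\Phi_d(\zeta)=0} B(\zeta)\overline{D(\zeta)}=\phi(d)$, so the identity collapses to
\[
\bbA^N_N[\Delta]=\frac{1}{N}\sum_{\zeta^N=1} A(\zeta)\,\overline{\Delta(\zeta)}.
\]

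Next I would pin down the support of the factor $\Delta(\zeta)$. Writing $\zeta=e^{2\pi i\xi/N}$, each factor $1-\zeta^{\rho_j N/p_j}=1-e^{2\pi i\rho_j\xi/p_j}$ vanishes iff $p_j\mid\rho_j\xi$; by $(\rho_j,p_j)=1$, this is equivalent to $p_j\mid\xi$. Hence $\Delta(\zeta)\neq 0$ iff no prime divisor of $N$ divides $\xi$, iff $(\xi,N)=1$, iff $\zeta$ is a primitive $N$-th root of unity. The identity above therefore reduces to a sum over the primitive $N$-th roots,
\[
\bbA^N_N[\Delta]=\frac{1}{N}\sum_{\zeta:\,\Phi_N(\zeta)=0} A(\zeta)\,\overline{\Delta(\zeta)}.
\]

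The implication (i)$\Rightarrow$(ii) is then immediate: if $\Phi_N\mid A(X)$, every summand on the right-hand side vanishes. (Alternatively this direction is a direct application of Lemma \ref{cub-div}, because any proper divisor $m$ of $N$ satisfies $m\mid N/p_j$ for some $j\in\mathfrak{J}$, forcing $\Phi_m\mid 1-X^{\rho_j N/p_j}$.) For the converse (ii)$\Rightarrow$(i), I would vary the translation parameter $c$ of $\Delta=\Delta_c$ while keeping the $\rho_j$ fixed. The resulting function $c\mapsto\bbA^N_N[\Delta_c]$ is a linear combination of the characters $c\mapsto\overline{\zeta^c}$ indexed by the $\phi(N)$ primitive $N$-th roots, with coefficients $\frac{1}{N}A(\zeta)\prod_j\overline{(1-\zeta^{\rho_j N/p_j})}$. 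Since distinct characters of $\ZZ_N$ are linearly independent, the hypothesis that this function vanishes for every $c$ forces each coefficient to be zero; as the $\Delta$-product is nonzero at each primitive $\zeta$ by the previous step, this yields $A(\zeta)=0$ at every primitive $N$-th root, hence $\Phi_N\mid A(X)$.

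The main subtlety is the gcd bookkeeping that identifies the exact set of roots of unity where $\Delta(\zeta)$ fails to vanish; once that is in hand, Theorem \ref{main-identity} and the linear independence of characters of $\ZZ_N$ deliver both implications with essentially no further work.
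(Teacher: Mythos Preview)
Your argument is correct. The forward direction (i)$\Rightarrow$(ii) matches the paper's first proof via Lemma~\ref{cub-div}. For the converse (ii)$\Rightarrow$(i), however, you take a genuinely different and more direct route than the paper.

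The paper proves (ii)$\Rightarrow$(i) by induction on the number of prime divisors of $N$: it peels off one prime $p$, writes $A$ in layers indexed by residues mod $p$, applies the inductive hypothesis to the differences of consecutive layers in $\ZZ_{N'}$ with $N'=N/p^\alpha$, and then invokes the structure theorem for vanishing sums of roots of unity (de Bruijn, R\'edei, Schoenberg) to reassemble $A$ as a linear combination of fibers. Your approach bypasses all of this: by fixing the $\rho_j$ and letting $c$ range over $\ZZ_N$, you exhibit $c\mapsto\bbA^N_N[\Delta_c]$ as a character sum supported only on primitive $N$-th roots, and linear independence of characters finishes the job. This is shorter and entirely self-contained, requiring no induction and no appeal to the fiber decomposition of vanishing sums. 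What the paper's longer argument buys is an explicit link to the fiber structure (the decomposition $A=A'+A''$ into fibers in different directions), which is thematically important for later sections; your proof, while cleaner, does not surface that structure.
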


\begin{proof}
Let $m|N$ satisfy $m\neq N$. Then $m|(N/p_i)$ for some $i$ such that $p_i|N$, so that $\Phi_m|(1-X^{N/p_i})$.
The implication (i) $\Rightarrow$ (ii) now follows from Lemma \ref{cub-div}.

An alternative proof that (i) implies (ii) (without using Theorem \ref{main-identity}; 
cf. \cite{Steinberger}, \cite{KMSV}) is as follows.
By classic results on vanishing sums of roots of unity (see \cite{deB}, \cite{LL}, \cite{Mann}, \cite{Re1}, \cite{Re2}, \cite{schoen}), $\Phi_N(X)|A(X)$ if and only if $A(X)$ is a linear combination of
the polynomials $\Phi_p(X^{N/p})$, where $p$ runs over all prime divisors of $N$, with integer (but not necessarily nonnegative) coefficients.
Equivalently, $\Phi_N(X)|A(X)$ if and only if 
$A$ can be represented as a 
linear combination of $N$-fibers.
It is very easy to see that 
(\ref{id-3a}) holds for all $N$-cuboids $\Delta$ if $A$ mod $N$ is an $N$-fiber, therefore it
also holds if $A$ mod $N$ is a linear combination of such fibers.

The proof that (ii) implies (i) is by induction on the number of prime divisors of $N$ (this argument was also 
known previously in the literature, see e.g.
\cite[Proposition 2.4]{Steinberger}). We present it here for completeness.

If $N=p^\alpha$
is a prime power, the claim follows from (\ref{cyclo-uniform}).
Suppose that the claim is true for all $N'$ with at most $k$ prime
divisors. Suppose that $N$ has $k+1$ prime divisors, and that $A\in\calm(\ZZ_N)$ obeys
(\ref{id-3a}) for all $N$-cuboids $\Delta$ in $\ZZ_N$. Let $p$ be a prime divisor of $N$,
and let $N'=N/p^\alpha$, where $(N',p)=1$. 

Assume first that 
\begin{equation}\label{e-skip}
A\in\calm( p^{\alpha-1}\ZZ_N).
\end{equation}
Write $A(X)=\sum_{j=0}^{p-1} X^{jN/p } A_j(X)$, where 
$A_j\in\calm(p^{\alpha}\ZZ_N)$. 
Each ``layer" $A_j$ can be identified in the
obvious manner with a multiset in $\ZZ_{N'}$.

For $j=0,1,\dots,p-1$, let $A_{j,0}$ be the weighted multiset defined via ${A}_{j,0}(X)= {A}_j(X)-{A}_0(X)$. The condition (\ref{id-3a}) 
implies that, with the obvious notation,
$$
(\bbA_{j,0})^{N'} [\Delta'] =0
$$
for every full-dimensional cuboid $\Delta'$ in $\ZZ_{N'}$. By the inductive assumption, $\Phi_{N'}(X)|
A_{j,0}(X)$. By the structure theorem for vanishing sums of roots of unity, ${A}_{j,0}$ is a linear combination
of $N'$-fibers in $\ZZ_{N'}$. Returning to $\ZZ_N$ now, and summing in $j$, we get that $A(X)
= A'(X)+A''(X)$, where:
\begin{itemize}
\item 
$A'(X)=\sum_{j=0}^{p-1} X^{jN/p } A_{j,0}(X)$.
 By the above argument, ${A}'$ is a linear combination of fibers in directions
perpendicular to $p$. 

\item ${A}''=\sum_{j=0}^{p-1} X^{jN/p} A_0(X)$.
This is a linear combination of fibers in the $p$ direction.
\end{itemize}
Thus ${A}$ is a linear combination of fibers, and therefore $\Phi_N(X)|A(X)$.

Finally, in the general case (without assuming (\ref{e-skip}), we can write $A$ as a union of multisets
$A^{(i)}$, $i=0,1,\dots, p^{\alpha-1} -1$, where each $A^{(i)}$ is a translate of a multiset satisfying (\ref{e-skip}).
If (\ref{id-3a}) holds for $A$, then it also holds for each $A^{(i)}$. By the previous argument we get that
$\Phi_N(X)|A^{(i)}(X)$ for each $i$, therefore it divides $A(X)$. This completes the proof that (ii) implies (i).
\end{proof}

\begin{remark}
Proposition \ref{cuboid} implies in particular that, for any $N|M$, $\Phi_N$ divides $A$ if and only if it divides the mask polynomial of $A\cap\Lambda(x,D(N))$ for every $x\in\ZZ_M$. Indeed, the vertices of any $N$-cuboid $\Delta$ are all contained in the same $D(N)$-grid. Hence the divisibility of $A$ by $\Phi_M$ is associated with the structure of $A$ on such grids.
\end{remark}


\subsection{Multiscale cuboids}\label{multiscale-cuboids}


In many situations, we need to work with cuboids on several scales simultaneously. This happens, for example, when we investigate divisibility of a polynomial $A(X)$ by combinations of cyclotomic polynomials, or when we try to reduce a tiling of $\ZZ_M$ to tilings of cosets of a subgroup. 
We will use cuboids with nontrivial templates to facilitate such multiscale cuboid analysis.

\begin{definition}\label{def-folding}
{\bf (Folding templates)} Let $M=\prod_{i=1}^K p_i^{n_i}$ and
$N=\prod_{i=1}^K p_i^{n_i-\alpha_i}$, with $0\leq \alpha_i\leq n_i$ for $i=1,\dots,K$. The {\em folding template} 
$T^M_N$ is given by
$$
T^M_N(X)
= \prod_{i:p_i|\frac{M}{N}} \prod_{\nu_i=1}^{\alpha_i} \Psi_{M/p_i^{\nu_i}}(X)
\equiv \frac{X^M-1}{X^N-1} \mod (X^M-1)
$$
where 
\begin{equation}\label{sf-def}
\Psi_{M/p_i^\delta}(X)= \Phi_{p_i}(X^{M/p_i^\delta}) = 1+X^{M/p_i^{\delta}}+ X^{2M/p_i^{\delta}} + \dots + X^{(p_i-1) M/p_i^{\delta}}.
\end{equation}
When $M$ is fixed, we will sometimes write $T_N$ instead of $T^M_N$ for simplicity. 
\end{definition}

Strictly speaking, $\Psi_{M/p_i^\delta}$ depends on both $M$ and $p_i^\delta$, and not just on their quotient; however, both numbers will always be clear from the context. We also note that $\Psi_{M/p_i}=F_i$.

Definition \ref{def-folding} allow us to consider $N$-cuboids as cuboids with templates in $\ZZ_M$. Specifically, 
let $N|M$ be as in Definition \ref{def-folding}.
Then for any $A\in\calm(\ZZ_M)$ and $x\in\ZZ_M$,
\begin{equation}\label{fold}
\bbA^N_N[x]= \bbA^M_M[x*T^M_N].
\end{equation}
Consequently, we have the following.

\begin{lemma}\label{folding-lemma}
With $M$ and $N$ as in Definition \ref{def-folding}, 
let $\calt=(M,\vec{\delta},T^M_N)$, where 
\begin{equation}\label{deltas}
\delta_i=
\begin{cases}
	\alpha_i+1 & \hbox{ if }\alpha_i<n_i \\
	0 &\hbox{ if } \alpha_i=n_i
\end{cases}	
\qquad i\in\{1,\ldots,K\}
\end{equation}
We will sometimes write $\vec{\delta}=\vec{\delta}^M_N$ to indicate the dependence on $M$ and $N$.
Let $A\in\calm(\ZZ_M)$ be a multiset. Then the following are equivalent:
\begin{itemize} 
\item $\Phi_N|A$,
\item $A$ is $\calt$-null in $\ZZ_M$,
\item the multiset induced by $A$ in $\ZZ_N$ is $N$-null (see Definition \ref{def-N-cuboids}).
\end{itemize}
\end{lemma}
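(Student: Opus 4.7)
The plan is to establish the chain of equivalences by decoupling them into two more elementary pieces: (1)$\Leftrightarrow$(3) via Proposition \ref{cuboid} applied to the reduction of $A$ modulo $N$, and (2)$\Leftrightarrow$(3) via the folding identity (\ref{fold}), which reduces $\calt$-evaluations in $\ZZ_M$ to $N$-cuboid evaluations in $\ZZ_N$. Once the dictionary between $\calt$-cuboids in $\ZZ_M$ and $N$-cuboids in $\ZZ_N$ is set up, everything else is essentially bookkeeping.

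For (1)$\Leftrightarrow$(3), I would note that the induced multiset $A \bmod N$ has mask polynomial $A(X) \bmod (X^N-1)$, and since $\Phi_N \mid X^N-1 \mid X^M-1$, the condition $\Phi_N(X) \mid A(X)$ mod $X^M-1$ is equivalent to $\Phi_N(X) \mid A(X)$ mod $X^N-1$. By Proposition \ref{cuboid} applied in $\ZZ_N$, this is in turn equivalent to the induced multiset being $N$-null.

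For (2)$\Leftrightarrow$(3), the key point is that reducing a $\calt$-cuboid modulo $N$ produces an $N$-cuboid, and conversely every $N$-cuboid lifts. Given $\Delta(X) = X^c \prod_{j\in\mathfrak{J}}(1-X^{d_j})$ of type $\calt=(M,\vec\delta^M_N,T^M_N)$, we have $(d_j,M) = M/p_j^{\alpha_j+1}$ for each $j\in\mathfrak{J}$. Working in the array coordinates of Section \ref{sec-array}, $d_j$ has all coordinates zero except the $p_j$-coordinate, which equals $\rho_j p_j^{n_j-\alpha_j-1}$ with $(\rho_j,p_j)=1$. Reducing modulo $N = \prod_i p_i^{n_i-\alpha_i}$ preserves this structure and yields $(d_j \bmod N, N) = N/p_j$, which is exactly the condition for the projected displacements to generate an $N$-cuboid $\bar\Delta$ in $\ZZ_N$ with the same sign pattern. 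The choice of $\vec\delta^M_N$ in (\ref{deltas}) is tailor-made for this correspondence: $\delta_j=\alpha_j+1$ gives precisely the step size one level finer than $N$ in the $p_j$-direction, while $\delta_j=0$ correctly drops out the primes not dividing $N$. Applying (\ref{fold}) to each vertex,
\begin{equation*}
\bbA^\calt[\Delta] = \sum_{\vec\epsilon} (-1)^{|\vec\epsilon|}\, \bbA^M_M[x_{\vec\epsilon} * T^M_N] = \sum_{\vec\epsilon} (-1)^{|\vec\epsilon|}\, \bbA^N_N[\bar x_{\vec\epsilon}] = \bbA^N_N[\bar\Delta].
\end{equation*}
Since every $N$-cuboid in $\ZZ_N$ arises from some such $\Delta$ by lifting the base point and displacements in array coordinates, $\calt$-nullness of $A$ in $\ZZ_M$ is equivalent to $N$-nullness of $A \bmod N$.

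The main obstacle is the arithmetic verification that the displacement condition $(d_j,M) = M/p_j^{\delta_j}$ with $\delta_j = \alpha_j+1$ translates, upon reduction mod $N$, into the $N$-cuboid displacement condition $(d_j \bmod N, N) = N/p_j$, and that the lift in the other direction works cleanly. This is what forces the particular choice of $\vec\delta^M_N$ in (\ref{deltas}); once it is confirmed via a coordinate-by-coordinate computation, the equivalence (2)$\Leftrightarrow$(3) is immediate from (\ref{fold}), and the whole lemma assembles from there.
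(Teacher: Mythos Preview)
Your proposal is correct and follows essentially the same route as the paper. The paper does not give a formal proof environment for this lemma; instead, immediately after the statement it records exactly the correspondence you describe (cuboids of type $\calt$ reduce mod $N$ to $N$-cuboids and conversely lift), together with the identity $\bbA^N_N[\Delta]=\bbA^\calt[\Delta]=\bbA^M_M[\Delta*T^M_N]$, and invokes Proposition \ref{cuboid} for the equivalence with $\Phi_N\mid A$. Your write-up simply fleshes out the arithmetic verification that $(d_j,M)=M/p_j^{\alpha_j+1}$ projects to $(d_j\bmod N,N)=N/p_j$, which the paper leaves implicit.
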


Let 
\begin{equation}\label{cuboid-lifted}
\Delta=X^c\prod_{i:p_i|N} (1-X^{d_i}), \ \ c\in\ZZ_M,\ (M,d_i)=M/p_i^{\alpha_i+1},
\end{equation}
be a cuboid of type $\calt$ as in Lemma \ref{folding-lemma}.
Then the cuboid $\Delta$ mod $N$, induced by $\Delta$ in $\ZZ_N$, is an $N$-cuboid. Conversely, any $N$-cuboid $\Delta'$ in $\ZZ_N$ can be written (not necessarily uniquely) as $\Delta$ (mod $N$), where $\Delta$ is a cuboid of the form (\ref{cuboid-lifted}) in $\ZZ_M$. Therefore, whenever working on scales $N$ and $M$ simultaneously, we will represent $N$-cuboids as cuboids of the form (\ref{cuboid-lifted}) in $\ZZ_M$. In this notation, a multiset $A\in\calm(\ZZ_M)$
satisfies any one (therefore all) of the conditions in Lemma \ref{folding-lemma}  if and only if 
$$\bbA^N_N[\Delta]= \bbA^\calt[\Delta]= \bbA^M_M[\Delta*T^M_N]=0$$
for all $\Delta$ as in (\ref{cuboid-lifted}).
Transitions between several intermediate scales $N_1,N_2,\dots |M$ will be handled similarly, with $\ZZ_M$ as the default ambient space.

Cuboids with more general templates can be used to indicate divisibility by combinations of several cyclotomic polynomials. 
We will be particularly interested in implications of the form ``if $\Phi_{s_1},\dots,\Phi_{s_l}$ divide $A(X)$, then $A$ is $\calt$-null for a given cuboid type $\calt$." It will not be necessary to aim for ``if and only if" conditions such as those in Lemma \ref{folding-lemma}.

\medskip\noindent
{\bf Examples.}
Let $M=\prod_{i=1}^K p_i^{n_i}$.

\begin{itemize}

\item[(1)] Assume that $n_i\geq 2$ for some $i\in\{1,\dots,K\}$. Let $\calt=(M, \vec{\delta}, 1)$ and $\calt'=(M, \vec{\delta}, T^M_{M/p_i})$, with $\delta_i=2$ and $\delta_j=1$ for $j\neq i$. 
Then
\begin{align*}
\Phi_{M/p_i}|A & \Leftrightarrow A \hbox{ is }\calt'\hbox{-null},\\
\Phi_M \Phi_{M/p_i}|A & \Leftrightarrow A \hbox{ is }\calt\hbox{-null}.
\end{align*}
The first equivalence follows from Lemma \ref{folding-lemma}. The second one is easy to check directly. Specifically, if $\Delta$ is a cuboid of type $\calt$, then $\Phi_m|\Delta$ for all $m|M$ except for $m\in\{M/p_i,M\}$; conversely, both $M$-cuboids and $M/p_i$-cuboids can be expressed as linear combinations of cuboids of type $\calt$. (A similar result appears in \cite[Lemma 2.13]{KMSV2}, where it is stated in terms of ``n-dimensional cube rules" and applied to Fuglede's conjecture on cyclic groups.)

\begin{figure}[h]
\captionsetup{justification=centering}
\includegraphics[scale=1.5]{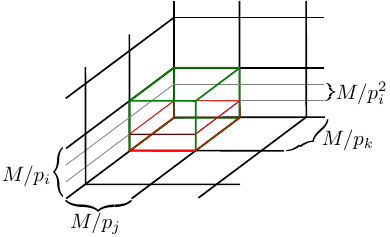}
\caption{A classic $M$-cuboid (green) vs. a multiscale cuboid (red) corresponding to the product $\Phi_M\Phi_{M/p_i}$}
\end{figure}

\item[(2)]
Let $2\leq\alpha\leq n_i$. Then 
$$\Phi_M \Phi_{M/p_i}\dots \Phi_{M/p_i^{\alpha}}|A$$
if and only if $A$ is $\calt_\alpha$-null, where $\calt_\alpha =(M, \vec{\delta}, 1)$, $\delta_i=\alpha+1$ if $\alpha_i<n_i$, $\delta_i=0$ if $\alpha_i=n_i$, and $\delta_j=1$ for $j\neq i$. This can be proved in the same way as in Example (1) above.

\item[(3)]
Assume that $n_i\geq 2$ for some $i\in\{1,\dots,K\}$. Let $\calt=(M, \vec{\delta}, T)$,
where $\delta_i=3$ if $n_i\geq 3$, $\delta_i=0$ if $n_i=2$, $\delta_j=1$ for $j\neq i$, and
$$
T(X)=\frac{X^{M/p_i}-1}{X^{M/p_i^2}-1}
= 1+X^{M/p_i^2}+\dots+X^{(p_i-1)M/p_i^2}.
$$
We claim that if $\Phi_M\Phi_{M/p_i^2}|A$, then $A$ is $\calt$-null.
Indeed, if $n_i\geq 3$, cuboids of type $\calt$ have the form 
$$
\Delta(X)=X^c(1-X^{\mu_iM/p_i^3})\prod_{j\neq i}(1-X^{\mu_jM/p_j}),
$$
 where $(\mu_i,M)=(\mu_j,M)=1$.
It follows that $\Delta(X)T(X)$ is divisible by all cyclotomic polynomials $\Phi_m(X)$, $m|M$, except for $\Phi_{M/p_i^2}$ and $\Phi_M$. If $n_i=2$, the same argument applies, except that there is no factor $1-X^{\mu_iM/p_i^3}$ in $\Delta(X)$.
\end{itemize}


\section{Tiling reductions}\label{sec-reductions}


\subsection{Subgroup reduction} 

In this section, we discuss two ways in which the question of proving (T2) for a tiling $A\oplus B=\ZZ_M$ (and, more generally, investigating the structure of such tilings) may be reduced to the analogous question for tilings $A'\oplus B'=\ZZ_{N}$, where $N|M$ and $N\neq M$. 
We start with a recap, in a slightly more general setting, of the reduction that Coven and Meyerowitz used in \cite{CM} to prove Theorem \ref{CM-thm}. 

\begin{theorem}\label{subgroup-reduction} {\bf (Subgroup reduction)} (\cite[Lemma 2.5]{CM}; see also \cite[Theorem 4.4]{dutkay-kraus})
Assume that $ A\oplus B=\ZZ_M $, where $M=\prod_{i=1}^K p_i^{n_i}$, and that:
\begin{itemize}
\item[(i)] there exists an $i\in\{1,\dots,K\}$ such that $A\subset p_i\ZZ_M$,

\item[(ii)] (T2) holds for both $A'$ and $B'$ in any tiling $A'\oplus B'=\ZZ_{N_i} $, where $N_i=M/p_i$, $|A'|=|A|$, and $|B'|=|B|/p_i$.

\end{itemize}
Then $ A $ and $ B $ satisfy (T2). 
\end{theorem}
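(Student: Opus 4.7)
The plan is to slice $B$ into $p_i$ cosets of $p_i\ZZ_M$, extract $p_i$ induced tilings of $\ZZ_{N_i}$ to which hypothesis (ii) applies, and then transport (T2) back to $\ZZ_M$ via a dictionary relating cyclotomic divisors of the original polynomials to those of their reductions in $\ZZ_{N_i}$.

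Since $A\subset p_i\ZZ_M$, we may write $A(X)=A'(X^{p_i})\pmod{X^M-1}$, where $A'\subset\ZZ_{N_i}$ is the image of $A$ under the identification $p_i\ZZ_M\cong\ZZ_{N_i}$. For $\zeta$ a primitive $p_i$-th root of unity, $A(\zeta)=A'(1)=|A|\neq 0$, so $\Phi_{p_i}\nmid A$ and therefore $\Phi_{p_i}\mid B$. By (\ref{cyclo-uniform}) this yields $|B\cap(j+p_i\ZZ_M)|=|B|/p_i$ for every $j$; writing $B=\bigsqcup_{j=0}^{p_i-1}B_j$ with $B_j\subset j+p_i\ZZ_M$ and letting $\tilde B_j\subset\ZZ_{N_i}$ denote the image of $B_j-j$ under $p_i\ZZ_M\cong\ZZ_{N_i}$, matching coefficients of $X^j$ on both sides of $A(X)B(X)\equiv(X^M-1)/(X-1)\pmod{X^M-1}$ and substituting $Y=X^{p_i}$ produces $A'(Y)\tilde B_j(Y)\equiv(Y^{N_i}-1)/(Y-1)\pmod{Y^{N_i}-1}$, i.e.\ $A'\oplus\tilde B_j=\ZZ_{N_i}$. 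By (ii), $A'$ and every $\tilde B_j$ satisfy (T2) in $\ZZ_{N_i}$.

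For $s\mid M$, $s\neq 1$, set $s^\star:=s/p_i$ when $p_i\mid s$ and $s^\star:=s$ otherwise. Since $\zeta^{p_i}$ has order $s^\star$ when $\zeta$ is a primitive $s$-th root, the identity $A(\zeta)=A'(\zeta^{p_i})$ yields the dictionary $\Phi_s\mid A\Leftrightarrow(s\neq p_i\text{ and }\Phi_{s^\star}\mid A')$. Consequently $p_i\notin S_A$; $p_i^\alpha\in S_A$ iff $\alpha\geq 2$ and $p_i^{\alpha-1}\in S_{A'}$; and $p_l^\beta\in S_A$ iff $p_l^\beta\in S_{A'}$ for $l\neq i$. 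Given $s_1,\ldots,s_k\in S_A$ prime powers on distinct primes with product $s$, the shifts $s_u^\star$ lie in $S_{A'}$ and remain prime powers on distinct primes (each power of $p_i$ in $S_A$ has exponent $\geq 2$, so $s_u^\star\geq p_i$), so (T2) for $A'$ delivers $\Phi_{s^\star}\mid A'$, hence $A(\zeta)=A'(\zeta^{p_i})=0$ and $\Phi_s\mid A$. This settles (T2) for $A$.

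The harder half is (T2) for $B$, because $B$ is not supported on a proper subgroup. Here I use that $S_A,S_B$ partition the prime-power divisors $>1$ of $M$, and $S_{A'},S_{\tilde B_j}$ similarly partition those of $N_i$ for every $j$; combined with the dictionary above, this forces $p_i\in S_B$, $p_i^\alpha\in S_B$ (for $\alpha\geq 2$) iff $p_i^{\alpha-1}\in S_{\tilde B_j}$ for \emph{every} $j$, and $p_l^\beta\in S_B$ iff $p_l^\beta\in S_{\tilde B_j}$ for every $j$. Given $s_1,\ldots,s_k\in S_B$ on distinct primes with product $s$, I expand
\[
B(\zeta)=\sum_{j=0}^{p_i-1}\zeta^j\,\tilde B_j(\zeta^{p_i})
\]
for $\zeta$ a primitive $s$-th root; the order of $\zeta^{p_i}$ equals $s^\star$, which is precisely the product of shifted prime powers lying simultaneously in every $S_{\tilde B_j}$, so (T2) applied to each $\tilde B_j$ individually yields $\Phi_{s^\star}\mid\tilde B_j$ for every $j$ and every summand vanishes. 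The case analysis splits into $p_i\nmid s$, $s_1=p_i$, and $s_1=p_i^\alpha$ with $\alpha\geq 2$, and is handled uniformly by the argument above. I expect the real subtlety to be the \emph{simultaneity across $j$}: (T2) for a single $\tilde B_j$ would not suffice, but hypothesis (ii) is precisely strong enough to produce (T2) for all complements at once, which is what the expansion of $B(\zeta)$ requires.
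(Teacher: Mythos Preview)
Your proof is correct and follows essentially the same approach as the paper's: both write $A(X)=A'(X^{p_i})$, slice $B$ into residue classes modulo $p_i$ to obtain $p_i$ tilings $A'\oplus \tilde B_j=\ZZ_{N_i}$, and then transport (T2) back to $\ZZ_M$ via the same cyclotomic dictionary (your map $s\mapsto s^\star$ is the inverse of the paper's $\tau$). The case analysis for (T2) on $B$ and the use of the expansion $B(\zeta)=\sum_j\zeta^j\tilde B_j(\zeta^{p_i})$ match the paper's argument line by line, and your remark on simultaneity across $j$ correctly identifies the point where hypothesis (ii) is used in full strength.
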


\begin{proof}
We have $A(X)=A'(X^{p_i})$ for some $A'\subset\ZZ_{N_i}$.
Write also 
$$B(X)\equiv \sum_{\nu=0}^{p_i-1} X^{\nu M/p_i^{n_i-1}} B_\nu( X^{p_i})\hbox{ mod }X^M-1,
$$
where $B_\nu \subset\ZZ_{N_i}$ for $\nu=0,1,\dots,p_i-1$. If $b\in B$ and $b\equiv r$ mod $p_i$, then $a+b\equiv r$ mod $p_i$ for all $a\in A$; in other words, the tiling breaks down into separate tilings of residue classes mod $p_i$, with $A'\oplus B_\nu=\ZZ_{N_i}$ for each $\nu$.

By the assumption (ii), $A'$ and $B_\nu$ satisfy (T2) for all $\nu$. We need to check that this is still true for $A$ and $B$. 
We first claim that for any polynomial $F(X)$, and for any $s\in\NN$,
\begin{equation}\label{cyclo-scaling}
\Phi_{\tau(s)}(X)|F(X^{p_i}) \ \ \Leftrightarrow \ \ \Phi_s(X)|F(X),
\end{equation}
where
$$
\tau(s)=\begin{cases} s &\hbox{ if }p_i\nmid s,\\
p_is &\hbox{ if }p_i| s.
\end{cases}
$$
Indeed, we have $\Phi_{\tau(s)}(X)|F(X^{p_i})$ if and only if $F(e^{2\pi i p_i/\tau(s)})=0$. This means that 
$F(e^{2\pi i /s})=0$ if $p_i|s$, and $F(e^{2\pi i p_i/s})=0$ if $p_i\nmid s$. In both cases, this is equivalent to $\Phi_s|F$.

Observe first that, by (\ref{cyclo-uniform}), we must have
$$
\Phi_{p_i}|B.
$$
Suppose that $s_1,\dots,s_k$ are powers of distinct primes such that $\Phi_{s_1}\dots\Phi_{s_k}|A$. As noted above, we cannot have $s_j=p_i$ for any $j$. Let $s'_j=s_j/p_i$ if $s_j$ is a power of $p_i$, and $s'_j=s_j$ otherwise. Then
$s'_j$ are prime powers, and $s_j=\tau(s'_j)$. By (\ref{cyclo-scaling}), $\Phi_{s'_1}\dots\Phi_{s'_k}|A'$, and
since (T2) holds for $A'$, we have $\Phi_{s'_1\dots s'_k}|A'$. Since $\tau(s'_1\dots s'_k)=s_1\dots s_k$, we get that
$\Phi_{s_1\dots s_k}|A$.

Suppose now that $s_1,\dots,s_k$ are powers of distinct primes such that $\Phi_{s_1}\dots\Phi_{s_k}|B$ and $s_1,\dots,s_k\neq p_i$, and define $s'_1,\dots,s'_k$ as above. Then for $j=1,\dots,k$ we have $\Phi_{s_j}\nmid A$, therefore $\Phi_{s'_j}\nmid A'$ and, since $A'\oplus B_\nu=\ZZ_{N_i}$ is a tiling, $\Phi_{s'_j}|B_\nu$ for each $\nu$. Since $B_\nu$ satisfies (T2), we have 
$\Phi_{s'_1\dots s'_k}|B_\nu$. It follows that $\Phi_{s_1\dots s_k}|B_\nu(X^{p_i})$ for each $\nu$, and therefore
$\Phi_{s_1\dots s_k}|B$.

Finally, suppose that $s_1,\dots,s_k$ are powers of distinct primes such that $\Phi_{s_1}\dots\Phi_{s_k}|B$ and $s_1,\dots,s_k$ are not powers of $p_i$, and consider $\Phi_{p_is}$ with $s=s_1\dots s_k$. We have
$$
B (e^{2\pi i/p_is})= \sum_{\nu=0}^{p_i-1} e^{2\pi i \nu M/sp_i^{n_i}} B_\nu( e^{2\pi is})=0,
$$
by (T2) for each $B_\nu$.
\end{proof}

\begin{corollary}\label{almostsquarefree} 
Let $ A\oplus B=\ZZ_M $, where $M=p_1^{n_1}p_2^{n_2}p_3^{n_3} \dots p_K^{n_K}$.
Assume that for each $i\geq 3$, the prime factor $p_i$ divides at most one of $|A|$ and $|B|$.  
(This happens e.g., if $n_i=1$ for $i\not\in\{1,2\}$).
Then both $A$ and $B$ satisfy (T2). 
\end{corollary}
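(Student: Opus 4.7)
The plan is to induct on the total exponent $N := n_1 + n_2 + \dots + n_K$, reducing at each step to a tiling on a smaller cyclic group until we reach the two-prime case handled by Theorem~\ref{CM-thm}. The base case is $K \leq 2$: the hypothesis on the primes $p_j$ with $j \geq 3$ is then vacuous, and Theorem~\ref{CM-thm} gives (T2) for both $A$ and $B$ since $|A|$ has at most two distinct prime factors.

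For the inductive step, assume $K \geq 3$ and fix any index $i \geq 3$. By the hypothesis of the corollary and symmetry, we may assume $p_i \nmid |A|$, so $\gcd(p_i, |A|) = 1$. Tijdeman's theorem then produces the tiling $p_i A \oplus B = \ZZ_M$, where $p_i A$ is the $|A|$-element set obtained by reducing $\{p_i a : a \in A\}$ modulo $M$; by construction $p_i A \subset p_i \ZZ_M$. I would then invoke Theorem~\ref{subgroup-reduction} with this tiling, which requires (T2) for both sides of \emph{every} tiling $A' \oplus B' = \ZZ_{N_i}$ with $N_i = M/p_i$, $|A'| = |A|$, and $|B'| = |B|/p_i$. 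Any such tiling still satisfies the hypothesis of the corollary on $\ZZ_{N_i}$: for $j \geq 3$ with $j \neq i$, the divisibility of $|A'|, |B'|$ by $p_j$ coincides with that of $|A|, |B|$, while for $j = i$ we have $p_i \nmid |A'| = |A|$. Since $N_i$ has total exponent $N - 1$, the inductive hypothesis applies and gives (T2) for such $A', B'$. Theorem~\ref{subgroup-reduction} then yields (T2) for $p_i A$ and for $B$.

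Since $B$ is the same in both tilings, its (T2) is already established. To transfer (T2) from $p_i A$ back to $A$, I would use the identity $(p_i A)(X) \equiv A(X^{p_i}) \pmod{X^M - 1}$ combined with the scaling relation (\ref{cyclo-scaling}): for each $s$, $\Phi_s \mid A$ iff $\Phi_{\tau(s)} \mid p_i A$, where $\tau(s) = s$ when $p_i \nmid s$ and $\tau(s) = p_i s$ when $p_i \mid s$. Given $s_1, \dots, s_k \in S_A$ that are powers of distinct primes, set $s'_j = \tau(s_j) \in S_{p_i A}$; since at most one $s_j$ is a power of $p_i$, a direct check shows $s'_1 \cdots s'_k = \tau(s_1 \cdots s_k)$. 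Applying (T2) for $p_i A$ to $s'_1, \dots, s'_k$ gives $\Phi_{s'_1 \cdots s'_k} \mid p_i A$, which by (\ref{cyclo-scaling}) is equivalent to $\Phi_{s_1 \cdots s_k} \mid A$. This closes the induction.

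The bookkeeping is where most care is needed: one must verify that the cardinality hypothesis of the corollary is preserved under passage to $\ZZ_{N_i}$ so that the inductive hypothesis applies to every tiling there (as required by Theorem~\ref{subgroup-reduction}), and then track the shift of prime-power cyclotomic divisors under $A \mapsto p_i A$ in order to carry (T2) back from $p_i A$ to $A$. No single step is deep; it is the way they chain together via Tijdeman's theorem, the subgroup reduction, the inductive hypothesis, and the $\tau$-transfer that requires the argument to be set up carefully.
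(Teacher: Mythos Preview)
Your proof is correct and follows essentially the same route as the paper: apply Tijdeman's theorem to pass to $p_iA\oplus B=\ZZ_M$ with $p_iA\subset p_i\ZZ_M$, invoke the subgroup reduction (Theorem~\ref{subgroup-reduction}) together with an inductive hypothesis on the smaller group, and then transfer (T2) back from $p_iA$ to $A$ via (\ref{cyclo-scaling}). Your induction on the total exponent $n_1+\dots+n_K$ is actually a bit cleaner than the paper's stated induction on $K$, and note that your $\tau$-transfer simplifies further here since $p_i\nmid |A|$ forces every $s_j\in S_A$ to be coprime to $p_i$, so in fact $\tau(s_j)=s_j$ throughout.
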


\begin{proof}
This is not stated explicitly in \cite{CM}, but it follows by a very similar argument. 
(See also \cite{Tao-blog}, \cite{dutkay-kraus}, \cite{shi}.)
We proceed by induction in the number of prime factors. If $K=2$ and $M=p_1^{n_1}p_2^{n_2}$, this is Theorem \ref{CM-thm}. Suppose that $K\geq 3$ and that 
(T2) holds for both $A'$ and $B'$ in any tiling $ A'\oplus B'=\ZZ_{M/p_K}$. By the assumption of the lemma, at least one of $|A|$ and $|B|$ is not divisible by $p_K$. Assume without loss of generality that $p_K\nmid |A|$. By Tijdeman's theorem (\cite[Theorem 1]{Tij}; see also \cite[Lemma 2.2]{CM}), $\tilde A\oplus B=\ZZ_M$ is again a tiling, where $\tilde A(X)=A(X^{p_K})$. We have $\tilde A\subset p_K\ZZ_M$, so that we may apply Theorem \ref{subgroup-reduction} to conclude that $\tilde A$ and $B$ satisfy (T2). By (\ref{cyclo-scaling}), this also means that $A$ satisfies (T2), since the (T2) condition for $A$ involves only cyclotomic polynomials $\Phi_s$ with $(s,p_K)=1$.
\end{proof}


\subsection{Slab reduction}

Our second tiling reduction also involves passing from a tiling $A\oplus B=\ZZ_M$ to a tiling of a smaller cyclic group. However, instead of restricting to residue classes and thus constructing a family of tilings of a subgroup $p_i\ZZ_M$, we will use periodicity. 
Recall that $M$-fibers $F_i$ and $M$-fibered sets were defined in Section \ref{gridsetal} (see (\ref{def-Fi})).
Let $M=\prod_{i=1}^K p_i^{n_i}$, and define 
\begin{equation}\label{Asubtile}
A_{p_i}=\{a\in A:\ 0\leq\pi_i(a)\leq p_i^{n_i-1}-1\},
\end{equation}
where $\pi_i$ is the array coordinate defined in Section \ref{sec-array}. Suppose that we have $S\oplus B=\ZZ_M$, where $S$ is the $M/p_i$-periodic extension of $A_{p_i}$ to $\ZZ_M$:
\begin{equation}\label{periodic-extension}
S(X)=A_{p_i}(X)F_i(X) .
\end{equation}
Then we may reduce the period of the tiling and write $A_{p_i}\oplus B=\ZZ_{M/p_i}$, where $A_{p_i}$ and $B$ are now considered mod $M/p_i$.

As a motivating example, suppose that $A\oplus B=\ZZ_M$, with $M$ as above, and that $A$ is $M$-fibered in the $p_i$ direction.
Let $A'$ be a set obtained from $A$ by choosing one point from each fiber, so that $|A'|=|A|/p_i$ and $A=A'*F_i$. Then $A$ is the periodic extension of $A'$, and we have $A'\oplus B=\ZZ_{M/p_i}$.

Our main results in this section are Theorem \ref{subtile} and Corollary \ref{slab-reduction}, where we develop a criterion for $A_{p_i}$ to admit periodic tilings as described above, and prove that passing to such tilings preserves the (T2) property.

\begin{lemma}\label{precycreduc}
Let $ A\in\mathcal{M}(\ZZ_{M}) $, with $M$ as above. Assume that $\Phi_d|A$ for some $d$ such that 
$p_i^{n_i}|d|M$. Then for every $1\leq \alpha_i\leq n_i $, 
$$ \Phi_{d/p_i^{\alpha_i}}|A \Rightarrow \Phi_{d/p_i^{\alpha_i}}|A_{p_i}.$$
\end{lemma}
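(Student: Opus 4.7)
Write $p=p_i$ and $n=n_i$ for short. Since $M_j$ is divisible by $p^n$ for every $j\neq i$, Lemma~\ref{coord-properties}(i) gives $a\equiv\pi_i(a)M_i\pmod{p^n}$; and as $(M_i,p)=1$, $M_i$ has an inverse $\mu\in\ZZ_{p^n}$, so $\pi_i(a)\equiv a\mu\pmod{p^n}$. The plan is to express $A_{p_i}(X)$ as a weighted sum of values of $A$ at multiplicatively shifted arguments $\eta_j^{-1}X$, and then use the two divisibility hypotheses to kill every surviving term.

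The first step is Fourier inversion on $\ZZ_{p^n}$ applied to the indicator of the slab $[0,p^{n-1})$, giving
\[
\mathbf{1}_{[0,p^{n-1})}(\pi_i(a))
\;=\;
\frac{1}{p^n}\sum_{j=0}^{p^n-1}W_j\,\eta_j^{-a},
\qquad
\eta_j:=e^{2\pi i j\mu/p^n},
\quad
W_j:=\sum_{r=0}^{p^{n-1}-1}e^{2\pi i jr/p^n}.
\]
Multiplying by $w_A(a)X^a$ and summing over $a\in\ZZ_M$ yields
\[
A_{p_i}(X)
\;=\;
\frac{1}{p^n}\sum_{j=0}^{p^n-1}W_j\,A(\eta_j^{-1}X).
\]
A geometric-series calculation gives $W_0=p^{n-1}$ and, for $j\neq 0$, $W_j=(e^{2\pi ij/p}-1)/(e^{2\pi ij/p^n}-1)$; in particular $W_j=0$ whenever $p\mid j$ and $j\neq 0$. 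This vanishing is the structural feature of the slab that will make the argument go through.

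Now fix a primitive $(d/p^{\alpha_i})$-th root of unity $\zeta$ and evaluate the identity at $X=\zeta$. Because $p^n\mid d\mid M$ forces $p^n\parallel d$, I can factor $\zeta=\zeta_1\zeta_2$ with $\zeta_1$ primitive $p^{n-\alpha_i}$-th and $\zeta_2$ primitive $d'$-th, where $d':=d/p^n$ is coprime to $p$. Split the sum by the $p$-adic valuation of $j$:
\begin{itemize}
\item For $j=0$: $A(\eta_0^{-1}\zeta)=A(\zeta)=0$ by $\Phi_{d/p^{\alpha_i}}\mid A$.
\item For $p\mid j$ with $j\neq 0$: $W_j=0$, so the term vanishes trivially.
\item For $(j,p)=1$: $\eta_j^{-1}$ is a primitive $p^n$-th root of unity, and since $\alpha_i\geq 1$ the order of $\zeta_1$ is at most $p^{n-1}<p^n$, so the $p$-parts cannot cancel and $\eta_j^{-1}\zeta_1$ still has order exactly $p^n$. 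Hence $\eta_j^{-1}\zeta=(\eta_j^{-1}\zeta_1)\zeta_2$ has order $p^n\cdot d'=d$, i.e.\ it is a primitive $d$-th root of unity, so $A(\eta_j^{-1}\zeta)=0$ by $\Phi_d\mid A$.
\end{itemize}
All terms vanish, so $A_{p_i}(\zeta)=0$. Since $\zeta$ was an arbitrary primitive $(d/p^{\alpha_i})$-th root of unity, this proves $\Phi_{d/p^{\alpha_i}}\mid A_{p_i}$.

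\textbf{Main obstacle.} The delicate point is the order computation in the $(j,p)=1$ case: one must verify that multiplying $\zeta_1$ by the primitive $p^n$-th root $\eta_j^{-1}$ does not drop the order of the $p$-part below $p^n$. The hypothesis $\alpha_i\geq 1$ is exactly what guarantees this, by forcing $\zeta_1$ to have strictly smaller $p$-power order than $\eta_j^{-1}$; with $\alpha_i=0$ cancellation could occur and $\eta_j^{-1}\zeta$ could land on a primitive root of some intermediate order. The other essential ingredient is the vanishing $W_j=0$ for nonzero multiples of $p$; without it one would need cyclotomic divisibility of $A$ by $\Phi_{d/p^\beta}$ for intermediate $\beta$, which is not available under the hypotheses.
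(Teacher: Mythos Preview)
Your proof is correct and takes a genuinely different route from the paper's. The paper proves this lemma through its cuboid machinery: it passes to the periodic extension $S=A_{p_i}*F_i$, sets up cuboid types $\calt$ and $\calt'$ with folding templates on the scales $d$ and $d'=d/p_i^{\alpha_i}$, uses the $\calt$-nullity of $A$ (from $\Phi_d\mid A$) to derive an averaging identity for $\bbA^d_d[\Delta_i]$, and then transfers from $A$ to $S$ by choosing a template $\Psi$ supported in the target slab. Your argument bypasses all of this: a roots-of-unity filter on $\ZZ_{p^n}$ writes $A_{p_i}(X)$ as a linear combination of twists $A(\eta_j^{-1}X)$, and a direct evaluation at a primitive $(d/p_i^{\alpha_i})$-th root $\zeta$ kills every term by one of three mechanisms: the hypothesis $\Phi_{d/p_i^{\alpha_i}}\mid A$ for $j=0$, the geometric-series vanishing $W_j=0$ for $p\mid j$ with $j\neq 0$, and the hypothesis $\Phi_d\mid A$ for $(j,p)=1$ once you observe that $\eta_j^{-1}\zeta$ has exact order $d$.

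Your approach is shorter and more self-contained for this particular statement, and it makes transparent exactly which hypothesis is consumed at which step (in particular, why $\alpha_i\geq 1$ is needed and why no intermediate $\Phi_{d/p_i^\beta}$ divisibility is required). The paper's approach, on the other hand, is written in the cuboid language that the rest of Section~\ref{sec-reductions} relies on, so it dovetails with the companion Lemma~\ref{cycreduc} and the slab reduction without a change of register; it also keeps everything over $\ZZ$ rather than introducing complex weights. Both proofs ultimately exploit the same structural fact about the slab $[0,p^{n-1})$: its Fourier transform on $\ZZ_{p^n}$ is supported on $\{0\}\cup\{j:(j,p)=1\}$, which in your argument appears as $W_j=0$ for $p\mid j\neq 0$ and in the paper's argument appears as the averaging identity \eqref{cuboid_av}.
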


\begin{proof}
Let $d = M/\prod_{j\neq i} p_j^{\alpha_j}$ and $d'=d/p_i^{\alpha_i}$. Assume that $\Phi_d\Phi_{d'}|A$. 
We would like to show that $\Phi_{d'}|A_{p_i}$. To this end, we define cuboid types $\calt=(M,\vec\delta,T_d)$ and 
$\calt'=(M,\vec\delta',T_{d'})$, where $T_d=T^M_d$, $T_{d'}=T^M_{d'}$ are the folding templates from Definition \ref{def-folding}, and $\vec{\delta}=\vec{\delta}^M_d$, $\vec{\delta}'=\vec{\delta}^M_{d'}$
are defined as in (\ref{deltas}) with $N=d$ and $N=d'$, respectively.

Let $S$ be the periodic extension of $A_{p_i}$ to $\ZZ_M$ defined in (\ref{periodic-extension}).
We have $\Phi_d| F_i$ but $\Phi_{d'}\nmid F_i$, so that $\Phi_{d'}|S$ if and only if $\Phi_{d'}|A_{p_i}$. We need to prove that
$$
\mathbb{S}^{d'}_{d'}[\Delta]=0
$$
for all cuboids $\Delta$ of type $\calt'$. Fix such a cuboid
$$
\Delta(X)=X^y\cdot\prod_j (1-X^{d_j}), \text{ where } y\in\ZZ_M, (d_j,M)=M/p_{j}^{\delta'_j}.
$$
Let
$$
\Delta_i(X)=X^y\cdot\prod_{j\neq i} (1-X^{d_j})
$$
so that 
$$\Delta(X)=\begin{cases}
(1-X^{d_i})\Delta_i(X)&\hbox{ if }\alpha_i<n_i,\\
\Delta_i(X)&\hbox{ if }\alpha_i=n_i.
\end{cases}
$$

Observe that if  $\rho_i\in\ZZ_M$ satisfies $(\rho_i,M)=M/p_i$, then $(1-X^{\rho_i})\Delta_i(X)$  is a cuboid of type $\calt$. Since $A$ is $\calt$-null, we have $\bbA^d_d[(1-X^{\rho_i})\Delta_i(X)]=0$, so that 
$$
\bbA^d_d[\Delta_i]=\bbA^d_d[\rho_i*\Delta_i].
$$
Averaging the last equality over all $\rho_i\in \{M/p_i,2M/p_i,\ldots,(p_i-1)M/p_i\}$, we get 
$$
\bbA^d_d[\Delta_i]=\frac{1}{\phi(p_i)}\bbA^d_d[\Delta_i*(F_i-1)].
$$
Clearly we may take linear combinations of the latter, i.e. for any set $V\subset\ZZ_M$,
\begin{equation}\label{cuboid_av}
\bbA^d_d[\Delta_i*V]=\frac{1}{\phi(p_i)}\bbA^d_d[\Delta_i*(F_i-1)*V].
\end{equation}

Let $\Psi\subset\ZZ_M$ be a set such that $\Psi(X)\equiv \prod_{\nu=2}^{\alpha_i}\Psi_{M/p_i^\nu}(X)$ mod $X^{M/p_i}-1$, and 
\begin{equation}\label{target-slab}
0\leq\pi_i(y+z) \leq p_i^{n_i-1}-1\ \ \forall z\in \Psi.
\end{equation}
Then
\begin{align*}
T_{d'}(X)&= T_d(X)F_i(X)\Psi(X)\\
&= T_d(X)\Psi(X) +(F_i-1)T_d(X)\Psi(X) ,
\end{align*}	
so that
\begin{align*}
\bbA^{d'}_{d'}[\Delta_i]&= \bbA^M_M[\Delta_i*T_{d'}]\\
&=\bbA^M_M[\Delta_i*T_d*\Psi ]
+\bbA^M_M[\Delta_i*(F_i-1)*T_d*\Psi]\\
&=\bbA^d_d[\Delta_i*\Psi]+\bbA^d_d[\Delta_i*(F_i-1)*\Psi]
\end{align*}
By (\ref{cuboid_av}) with $V=\Psi$, we get 
\begin{equation}\label{p-distribute}
\bbA^{d'}_{d'}[\Delta_i]=\frac{p_i}{\phi(p_i)}\bbA^d_d[\Delta_i*\Psi].
\end{equation}

The proof of (\ref{p-distribute}) used only that $\Phi_d|A$. Since $S$ has the same property, it follows that
$$
\mathbb{S}^{d'}_{d'}[\Delta_i]=\frac{p_i}{\phi(p_i)}\mathbb{S}^d_d[\Delta_i*\Psi],
$$
By (\ref{target-slab}), we also have
$$
\bbA^d_d[\Delta_i*\Psi]=\mathbb{S}^d_d[\Delta_i*\Psi].
$$

Assume first that $\alpha_i<n_i$, and note that all of the above arguments apply with $\Delta_i$ replaced by $d_i*\Delta_i$, yielding the same conclusions with $\Psi$ replaced by $\Psi'$ such that (\ref{target-slab}) holds with $y$ replaced by $y+d_i$.
Recall that $A$ is $\calt'$-null, so that
\begin{align*}
0&=\bbA^{d'}_{d'}[\Delta]\\
&= \bbA^{d'}_{d'}[\Delta_i]-\bbA^{d'}_{d'}[{d_i}*\Delta_i]\\
&= \frac{p_i}{\phi(p_i)}(\bbA^d_d[\Delta_i*\Psi]-\bbA^d_d[{d_i}*\Delta_i*\Psi'])\\
&=\frac{p_i}{\phi(p_i)}(\mathbb{S}^d_d[\Delta_i*\Psi]-\mathbb{S}^d_d[{d_i}*\Delta_i*\Psi']).
\end{align*}
Taking the convolution with $\Psi_{M/p_i}$, and using that $\Psi(X)\Psi_{M/p_i}(X)\equiv \Psi'(X)\Psi_{M/p_i}(X)$ mod $(X^M-1)$, we conclude that
\begin{align*}
0 &= \mathbb{S}^d_d[\Delta*\Psi* \Psi_{M/p_i}]\\
&= \mathbb{S}^{d'}_{d'}[\Delta]
\end{align*}
It follows that $S$ is $\calt'$-null, as required.

If $\alpha_i=n_i$, the proof is the same as above, except that then $\Delta=\Delta_i$ and so the terms with $d_i*\Delta_i$ do not appear in the above calculation.
\end{proof}

\begin{lemma}\label{cycreduc}
Let $ A\in\mathcal{M}(\ZZ_{M}) $, with $M$ as above, and let $p_i^{n_i}|d|M$ and $ 1\leq\alpha_i\leq n_i $.
Assume that $\Phi_{d/p_i^{\alpha_i}}|A'_{p_i}$ for all translates $A'$ of $A$. Then $\Phi_d\Phi_{d/p_i^{\alpha_i}}|A$.
\end{lemma}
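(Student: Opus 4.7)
The plan is to establish $\Phi_d|A$ and $\Phi_{d'}|A$ (where $d' = d/p_i^{\alpha_i}$) by evaluating $A$ at primitive $d$-th and $d'$-th roots of unity, using the translation-invariant hypothesis to force a $p_i^{n_i-1}$-periodicity on the $\pi_i$-level sets of $A$, modulo $\Phi_{d'}$.

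For each $r \in \ZZ_{p_i^{n_i}}$ set $C_r(X) := \sum_{a\in A,\, \pi_i(a)=r} w_A(a) X^a$, and for $s \in \ZZ_{p_i^{n_i}}$ let $W_s := \{s, s+1,\dots, s+p_i^{n_i-1}-1\} \pmod{p_i^{n_i}}$ be the length-$p_i^{n_i-1}$ window and $\tilde A_s(X) := \sum_{r\in W_s} C_r(X)$ the corresponding ``slab'' polynomial. For a translate $A' = y*A$ with $\pi_i(y) = -s$, one checks $A'_{p_i}(X) = X^y \tilde A_s(X)$, so the hypothesis yields $\Phi_{d'}|\tilde A_s$ for every $s \in \ZZ_{p_i^{n_i}}$. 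Since $\tilde A_s - \tilde A_{s+1} = C_s - C_{s+p_i^{n_i-1}}$, I conclude
\begin{equation*}
C_r(X) \equiv C_{r+p_i^{n_i-1}}(X) \pmod{\Phi_{d'}(X)} \qquad \text{for every } r \in \ZZ_{p_i^{n_i}}.
\end{equation*}

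Next I decompose the evaluations by CRT. Parametrize $a = r M_i + s(a) p_i^{n_i}$ in $\ZZ_M$ and set $\hat C_r(X) := \sum_{a\in A,\, \pi_i(a)=r} w_A(a) X^{s(a)}$. For a primitive $d$-th root $\zeta$, let $\omega := \zeta^{M_i}$ and $\rho := \zeta^{p_i^{n_i}}$, which are primitive roots of orders $p_i^{n_i}$ and $\tilde d := d/p_i^{n_i}$ respectively, so that $C_r(\zeta) = \omega^r \hat C_r(\rho)$. The analogous computation for a primitive $d'$-th root $\zeta'$, with $\omega' := \zeta'^{M_i}$ of order $p_i^{n_i-\alpha_i}$ (hence $\omega'^{p_i^{n_i-1}}=1$ since $\alpha_i \geq 1$), converts the congruence above into the equality $\hat C_r(\rho') = \hat C_{r+p_i^{n_i-1}}(\rho')$ for every primitive $\tilde d$-th root $\rho'$. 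By the irreducibility of $\Phi_{\tilde d}$ this identity passes to the specific $\rho = \zeta^{p_i^{n_i}}$.

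Grouping $A(\zeta) = \sum_{r=0}^{p_i^{n_i}-1} \omega^r \hat C_r(\rho)$ according to $r \bmod p_i^{n_i-1}$ and using the periodicity gives
\begin{equation*}
A(\zeta) = \Bigl(\sum_{r=0}^{p_i^{n_i-1}-1} \omega^r \hat C_r(\rho)\Bigr) \sum_{k=0}^{p_i-1} \bigl(\omega^{p_i^{n_i-1}}\bigr)^k = 0,
\end{equation*}
since $\omega^{p_i^{n_i-1}}$ is a primitive $p_i$-th root of unity. Hence $\Phi_d|A$. For a primitive $d'$-th root $\zeta'$, the same regrouping combined with $\omega'^{p_i^{n_i-1}}=1$ yields $A(\zeta') = p_i \cdot A_{p_i}(\zeta') = 0$ by the hypothesis applied with $y = 0$, giving $\Phi_{d'}|A$. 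The main technical point is the transfer of periodicity from $\Phi_{d'}$ to evaluations at primitive $d$-th roots; this is resolved by the CRT split, which isolates a common cyclotomic factor $\Phi_{\tilde d}$ of both $\Phi_d$ and $\Phi_{d'}$ on which the periodicity resides.
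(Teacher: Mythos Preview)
Your proof is correct. The core idea matches the paper's: comparing two adjacent length-$p_i^{n_i-1}$ slabs that differ by a single slice yields the periodicity $C_r \equiv C_{r+p_i^{n_i-1}} \pmod{\Phi_{d'}}$, and this periodicity forces both $\Phi_d|A$ and $\Phi_{d'}|A$.

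The execution, however, is genuinely different. The paper works entirely within its cuboid framework: it sets up cuboid types $\calt,\calt'$ with folding templates, decomposes $A=\sum_\nu A_\nu$ into slabs, and for $\Phi_d$ takes two overlapping slabs $A'_i,A''_i$ (shifted by one unit in the $p_i$-coordinate), subtracts their $\calt'$-evaluations, and reads off $\bbA^d_d[\Delta]=0$. Your argument replaces all of this with a direct evaluation at roots of unity, using the CRT split $a = rM_i + s(a)p_i^{n_i}$ to factor $C_r(\zeta)=\omega^r\hat C_r(\rho)$. The crucial step---transferring the periodicity of the $\hat C_r$ from primitive $d'$-th roots to primitive $d$-th roots---is handled cleanly by noting that both $\rho$ and $\rho'$ are primitive $\tilde d$-th roots (with $\tilde d=d/p_i^{n_i}$), so irreducibility of $\Phi_{\tilde d}$ over $\QQ$ carries the identity across. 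This is a nice observation that the paper's combinatorial language does not make explicit. Your route is more elementary and self-contained (no cuboid machinery needed), while the paper's route integrates the lemma into the broader toolkit developed in Sections~\ref{gen-cuboids} and~\ref{multiscale-cuboids}.
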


\begin{proof}
Let $d = M/\prod_{j\neq i} p_j^{\alpha_j}$ and $d'=d/p_i^{\alpha_i}$. Define the cuboid types $\calt=(M,\vec\delta,T_d)$ and $\calt'=(M,\vec\delta',T_{d'})$ as in the proof of Lemma \ref{precycreduc}.

In order to prove that $\Phi_{d'}|A$, it suffices to show $ \bbA^{d'}_{d'}[\Delta]=0$ for all cuboids of the form
\begin{equation}\label{dcuboid}
\Delta'(X)=X^y\cdot\prod_j (1-X^{d_j}), \ \  y\in\ZZ_M, (d_j,M)=M/p_{j}^{\delta'_j}.
\end{equation} 
We write $ A(X)=\sum_{\nu=0}^{p_i-1} A_{\nu}(X) $, where 
\begin{equation}\label{subsets}
A_{\nu}=\{a\in A\,|\,0\leq\pi_i(a)-\nu p_i^{n_i-1}\leq p_i^{n_i-1}-1\},\quad\nu=0,1,\ldots, p_i-1.
\end{equation}
Then
$$
\bbA^{d'}_{d'}[\Delta']=\sum_{\nu=0}^{p_i-1}(\bbA_\nu)^{d'}_{d'}[\Delta'] =0
$$
using the assumption that $A_\nu$ are $\calt'$-null for all $\nu$.

We now prove that $ \Phi_d|A $. It suffices to show $\bbA^d_d[\Delta]=0$ for any cuboid of the form
\begin{equation}\label{longcuboid}
\Delta(X)=(1-X^{M/p_i})\Delta_i(X),
\end{equation}
where $y\in\ZZ_N$, and 
$$\Delta_i(X)=X^y\cdot\prod_{j\neq i} (1-X^{d_j}), \ \  y\in\ZZ_M, (d_j,M)=M/p_{j}^{\delta_j}.
$$
Indeed, any cuboid of type $\calt$ can be written as a linear combination of cuboids as in (\ref{longcuboid}).

Let $\Psi(X)= \prod_{\nu=2}^{\alpha_i}\Psi_{M/p_i^\nu}(X)$, and
$$\Delta''(X)=\begin{cases}
\big(1-X^{M/p_i^{\alpha_i+1}}\big)\Delta_i(X)&\hbox{ if }\alpha_i<n_i,\\
\Delta_i(X)&\hbox{ if }\alpha_i=n_i.
\end{cases}
$$
Define also
\begin{align*}
A'_i&=\{a\in A:\ 0\leq\pi_i(a-y)\leq p_i^{n_i-1}-1\},\\
A''_i&=\{a\in A:\ 1\leq\pi_i(a-y)\leq p_i^{n_i-1}\}.
\end{align*}
By our assumption on $A$, $ \Phi_{d'} $ divides both $A'_i$ and $A''_i$. Suppose first that $\alpha_i<n_i$. Then
\begin{align*}
0&= (\bbA'_i)^{d'}_{d'}[\Delta''] = \bbA^{d}_{d}[\Delta_i*\Psi] - \bbA^{d}_{d}[(M/p_i^{\alpha_i+1})*\Delta_i*\Psi],
\\
0&= (\bbA''_i)^{d'}_{d'}[\Delta''] = \bbA^{d}_{d}[\Delta_i*\Psi'] - \bbA^{d}_{d}[(M/p_i^{\alpha_i+1})*\Delta_i*\Psi],
\end{align*}
where $\Psi'(X)=\Psi-1+X^{M/p_i}$.
Taking the difference, we get
\begin{align*}
0&=(\bbA'_i)^{d'}_{d'}[\Delta''] - (\bbA''_i)^{d'}_{d'}[\Delta''] 
\\
&= \bbA^{d}_{d}[\Delta_i*\Psi] -  \bbA^{d}_{d}[\Delta_i*\Psi']
\\
& = \bbA^{d}_{d}[\Delta],
\end{align*}
which proves the claim.

If $\alpha_i=n_i$, the proof is the same, except that the terms with $(M/p_i^{\alpha_i+1})*\Delta_i$ are replaced by $0$ in the above calculation.
\end{proof}

\begin{theorem}\label{subtile}
Assume that $A\oplus B=\ZZ_M$ and $\Phi_{p_i^{n_i}}|A$. Then the following are equivalent:

\smallskip

(i) For any translate $ A' $ of $ A $ we have $A'_{p_i}\oplus B=\ZZ_{M/p_i}$.
 
\smallskip

(ii) 
For every $d$ such that $p_i^{n_i}|d|M$, at least one of the following holds:
\begin{equation}\label{ts-e2}
\Phi_d|A,
\end{equation}
\begin{equation}\label{ts-e1}
\Phi_{d/p_i}\Phi_{d/p_i^2}\dots \Phi_{d/p_i^{n_i}}\mid B.
\end{equation}

\smallskip

(iii) 
For every $p_i^{n_i}|m|M$,
\begin{equation}\label{ts-e3}
 m\in \Div(A) \Rightarrow m/p_i\notin \Div(B).
\end{equation}
\end{theorem}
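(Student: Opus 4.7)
The plan is to prove the theorem via the equivalences $(i)\Leftrightarrow(ii)$ and $(i)\Leftrightarrow(iii)$: the first uses the cuboid Lemmas \ref{precycreduc} and \ref{cycreduc}, and the second is a direct divisor-set computation based on Sands's Theorem \ref{thm-sands}.

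For $(i)\Leftrightarrow(ii)$, note that every $s\mid M/p_i$ with $s\neq 1$ admits a unique factorization $s = d/p_i^{\alpha_i}$ with $p_i^{n_i}\mid d\mid M$ and $1\leq \alpha_i\leq n_i$. The hypothesis $\Phi_{p_i^{n_i}}\mid A$ forces $|A'_{p_i}| = |A|/p_i$ for every translate $A'$ of $A$, so the cardinality condition for $A'_{p_i}\oplus B = \ZZ_{M/p_i}$ is automatic, and the tiling reduces to the statement: $\Phi_s\mid A'_{p_i}$ or $\Phi_s\mid B$ for every nontrivial $s\mid M/p_i$. Assuming $(ii)$ and fixing such $s=d/p_i^{\alpha_i}$: if $\Phi_s\nmid B$ then $\Phi_s\mid A$ from the tiling $A\oplus B=\ZZ_M$ and $\Phi_d\mid A$ from $(ii)$, so Lemma \ref{precycreduc} applied to the translate $A'$ (using translation invariance of divisibility by $\Phi_s, \Phi_d$) yields $\Phi_s\mid A'_{p_i}$. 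Conversely, assume $(i)$, and suppose $\Phi_d\nmid A$ while $\Phi_s\nmid B$ for some $s=d/p_i^{\alpha_i}$; then $(i)$ gives $\Phi_s\mid A'_{p_i}$ for every translate $A'$, and Lemma \ref{cycreduc} forces $\Phi_d\Phi_s\mid A$, contradicting $\Phi_d\nmid A$.

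For $(i)\Leftrightarrow(iii)$, I pass to the periodic extension $S = A_{p_i}*F_i$. Since the $\pi_i$-images of $A_{p_i}$ and $F_i$ partition $\ZZ_{p_i^{n_i}}$, one has $|S| = |A|$, and $A_{p_i}\oplus B = \ZZ_{M/p_i}$ is equivalent to $S\oplus B = \ZZ_M$, which by Sands's theorem reduces to $\Div_M(S)\cap\Div_M(B) = \{M\}$. A case analysis on $s = a+t$, $s' = a'+t'\in S$ shows that $(s-s',M)$ coincides with $(a-a',M)\in\Div_M(A_{p_i})$ in every case \emph{except} when $t\neq t'$ and $\pi_i(a)=\pi_i(a')$: there, $\pi_i(s-s')$ is a nonzero multiple of $p_i^{n_i-1}$ (because $\pi_i(a-a')=0$ while $\pi_i(t-t')$ has exact $p_i$-order $n_i-1$), so $(s-s',M) = p_i^{n_i-1}e$ where $(a-a',M) = p_i^{n_i}e$ is the corresponding top-level divisor. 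Thus
\begin{equation*}
\Div_M(S) = \Div_M(A_{p_i}) \cup \{p_i^{n_i-1}e : p_i^{n_i}e\in\Div_M(A_{p_i})\}.
\end{equation*}
Since $\Div_M(A_{p_i})\subseteq\Div_M(A)$, Sands's theorem applied to $A\oplus B$ handles the first part, so $S\oplus B = \ZZ_M$ reduces to: $p_i^{n_i-1}e\notin\Div_M(B)$ whenever $p_i^{n_i}e\in\Div_M(A_{p_i})$. Quantifying over translates gives $\bigcup_{A'}\{p_i^{n_i}e\in\Div_M(A'_{p_i})\} = \{p_i^{n_i}e\in\Div_M(A)\}$: any pair $a, a'\in A$ with $(a-a',M) = p_i^{n_i}e$ automatically has $\pi_i(a) = \pi_i(a')$ and can be jointly translated into the lower slab $\{0,\ldots,p_i^{n_i-1}-1\}$ in the $p_i$ coordinate. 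Rewriting $m = p_i^{n_i}e$ gives exactly $(iii)$. The main obstacle is this ``mixed'' case of the $\Div_M(S)$ computation: it produces the extra divisors that genuinely couple $(iii)$ to the tiling of $S\oplus B$ (and prevent $(i)$ from collapsing to the weaker condition $M/p_i\notin\Div_M(B)$); keeping the role of translates of $A$ straight in both directions is a secondary bookkeeping concern.
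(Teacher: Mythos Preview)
Your proof is correct. The argument for $(i)\Leftrightarrow(ii)$ matches the paper's exactly, using Lemmas \ref{precycreduc} and \ref{cycreduc} in the same way.

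For $(i)\Leftrightarrow(iii)$ you take a slightly different route: the paper works directly in $\ZZ_{M/p_i}$, applying Sands's Theorem \ref{thm-sands} to the tiling $A'_{p_i}\oplus B=\ZZ_{M/p_i}$ and analyzing $\Div_{M/p_i}(A'_{p_i})\cap\Div_{M/p_i}(B)$, with separate case checks that $A'_{p_i}$ and $B$ are genuinely sets mod $M/p_i$. You instead lift to $\ZZ_M$ via the periodic extension $S=A_{p_i}*F_i$ and apply Sands there, which packages the same information into a single explicit formula for $\Div_M(S)$. The two arguments are equivalent in substance; yours has the mild advantage that the ``mixed'' divisors $m/p_i$ appear transparently as the new elements of $\Div_M(S)$, and the set-ness of $B$ mod $M/p_i$ is absorbed into the Sands condition for $S\oplus B=\ZZ_M$ rather than checked separately. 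One small imprecision: your phrase ``the $\pi_i$-images of $A_{p_i}$ and $F_i$ partition $\ZZ_{p_i^{n_i}}$'' is not literally true (it is $[0,p_i^{n_i-1})\oplus\pi_i(F_i)$ that partitions, not $\pi_i(A_{p_i})\oplus\pi_i(F_i)$), but the intended conclusion that $A_{p_i}*F_i$ is a direct sum with $|S|=|A|$ is correct.
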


\begin{proof} Let $N_i=M/p_i$. The assumption that $\Phi_{p_i^{n_i}}|A$ implies that for any translate $A'$ of $A$,
we have $|A'_{p_i}|=|A|/p_i$,  so that 
\begin{equation}\label{ts-e4}
|A'_{p_i}|\,|B|=N_i.
\end{equation}
 
(i) $\Rightarrow$ (ii): Assume that (i) holds, and suppose that (\ref{ts-e1}) fails for some $p_i^{n_i}|d|M$. Then there is an $\alpha_i$ such that $ 1\leq\alpha_i\leq n_i$ and $\Phi_{d/p_i^{\alpha_i}}\nmid B$. Then $\Phi_{d/p_i^{\alpha_i}}|A'_{p_i}$ for any translate $ A' $ of $ A $. By Lemma \ref{cycreduc}, (\ref{ts-e2}) must hold.

\smallskip

(ii) $\Rightarrow$ (i): Assume that (ii) holds. With (\ref{ts-e4}) in place, it suffices to prove that 
for every $ d'|N_i,\,d'>1 $,  $ \Phi_{d'}$ divides at least one of $A'_{p_i}(X)$ and $B(X)$. Let $d'|N_i$, $d'>1$, and suppose that $\Phi_{d'}\nmid B$.  Then $d'=d/p_i^{\alpha_i}$ for some $ p_i^{n_i}|d|M$ and $ 1\leq\alpha_i\leq n_i$. By (ii), we must have $\Phi_d|A$. Since $A\oplus B=\ZZ_M$ is a tiling, we must also have $\Phi_{d'}|A$. By Lemma \ref{precycreduc}, we must have $\Phi_{d'}|A'_{p_i}$ as claimed.

\smallskip

(i) $\Rightarrow$ (iii): Assume that (i) holds. This implies in particular that $A'_{p_i}$ and $B$ mod $\ZZ_{N_i}$ are sets, so that $N_i\not\in\Div(B)$ and (\ref{ts-e3}) holds for $m=M$. 

Next, Theorem \ref{thm-sands} applied to the tiling $A'_{p_i}\oplus B=\ZZ_{N_i}$ implies that
\begin{equation}\label{ts-e5}
 \Div_{N_i}(A) \cap \Div_{N_i}(B)= \{N_i\},
\end{equation}
Suppose that (\ref{ts-e3}) fails for some $m\neq M$ such that $p_i^{n_i}|m|M$, so that 
$m\in \Div(A)$ and $m/p_i\in \Div(B)$. Then $m/p_i \neq N_i$ and $m/p_i\in\Div_{N_i}(A) \cap \Div_{N_i}(B)$.
But this contradicts (\ref{ts-e5}).

\smallskip

(iii) $\Rightarrow$ (i): Assume that (iii) holds. By Theorem \ref{thm-sands}, it suffices to prove that $A'_{p_i}, B$ mod $\ZZ_{N_i}$ are sets such that (\ref{ts-e5}) holds.

We first verify that $A'_{p_i}, B$ mod $\ZZ_{N_i}$ are sets. Indeed, if $a,a'\in A'_{p_i}$ and $a\equiv a'$ mod $N_i$, then $a=a'$ by the definition of $A'_{p_i}$. On the other hand, $M\in\Div(A)$ trivially, and by (\ref{ts-e3}) it follows that $N_i\not\in\Div(B)$, so that $B$ mod $N_i$ is also a set.

Suppose now that (\ref{ts-e5}) fails, with $m_1\in (\Div_{N_i}(A) \cap \Div_{N_i}(B))\setminus \{N_i\}$. Since $\Div(A)\cap\Div(B)=\{M\}$, we must have $m_1=m_2/p_i$ for some $m_2$ with $p_i^{n_i}|m_2|M$, so that
$$
m_{j}\in\Div(A'_{p_i}),\ m_{k}\in\Div(B)
$$
for some permutation $(j,k)$ of $(1,2)$. By the definition of $A'_{p_i}$, we cannot have $p_i^{n_i-1}\parallel s$ for
$s\in \Div(A'_{p_i})$, so that $j=2$, $k=1$. But this contradicts (\ref{ts-e3}).
\end{proof}

\begin{remark}\label{fiberingimpliesslab}
In the special case when $A$ is $M$-fibered in the $ p_i$ direction, the condition (\ref{ts-e1})
 of Theorem \ref{subtile} is satisfied since then $ \Phi_d|A$ for all $p_i^{n_i}|d|M$.
It is also easy to verify directly that (\ref{ts-e3}) holds in this case.

\end{remark}

\begin{corollary}\label{slab-reduction} {\bf (Slab reduction)}
Assume that $ A\oplus B=\ZZ_M $, where $M=\prod_{i=1}^K p_i^{n_i}$, and that:
\begin{itemize}
\item (T2) holds for both $A'$ and $B'$ in any tiling $ A'\oplus B'=\ZZ_{N_i} $, where $N_i=M/p_i$, $|A'|=|A|/p_i$, and $|B'|=|B|$,
\item there exists an $i\in\{1,\dots,K\}$ such that $\Phi_{p_i^{n_i}}|A$ and $A,B$ obey one (therefore all) of the conditions (i)-(iii) of Theorem \ref{subtile}.
\end{itemize}
Then $ A $ and $ B $ satisfy (T2). 
\end{corollary}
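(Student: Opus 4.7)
The plan is to invoke condition~(i) of Theorem~\ref{subtile} to produce slab tilings $A'_{p_i}\oplus B=\ZZ_{N_i}$ for every translate $A'$ of $A$, apply the hypothesis that (T2) holds on each such tiling, and lift the cyclotomic divisibility information back to $\ZZ_M$ via Lemma~\ref{cycreduc}. Specifically, condition~(i) of Theorem~\ref{subtile} guarantees $A'_{p_i}\oplus B=\ZZ_{N_i}$ for every translate $A'$, with $|A'_{p_i}|=|A|/p_i$ because $\Phi_{p_i^{n_i}}|A$, matching the cardinality condition in the first hypothesis of the corollary. Hence both $A'_{p_i}$ and $B$ satisfy (T2) in $\ZZ_{N_i}$; moreover, since all these tilings share the common complement $B$, the sets $A'_{p_i}$ possess identical prime-power cyclotomic divisors as $A'$ ranges over translates of $A$, and so (T2) for $A'_{p_i}$ holds uniformly in $A'$.

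The (T2) property for $B$ in $\ZZ_M$ then follows easily. Since $\Phi_{p_i^{n_i}}|A$ and $A\oplus B=\ZZ_M$, we have $p_i^{n_i}\notin S_B$, so every $s\in S_B$ divides $N_i$ and $S_B$ computed in $\ZZ_M$ coincides with $S_B$ computed in $\ZZ_{N_i}$. For any $s_1,\dots,s_k\in S_B$ powers of distinct primes, the product $s_1\cdots s_k$ still divides $N_i$, so the polynomial divisibility $\Phi_{s_1\cdots s_k}\mid B$ supplied by (T2) in $\ZZ_{N_i}$ transfers verbatim to $\ZZ_M$.

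To prove (T2) for $A$ in $\ZZ_M$, I use the tiling relation $A'_{p_i}\oplus B=\ZZ_{N_i}$ to identify the prime power divisors of $N_i$ dividing $A'_{p_i}$ as exactly $S_A\setminus\{p_i^{n_i}\}$. Let $s_1,\dots,s_k\in S_A$ be powers of distinct primes. If none of them equals $p_i^{n_i}$, set $t=s_1\cdots s_k$, a divisor of $N_i$; writing $p_i^\alpha\parallel t$ with $0\le\alpha\le n_i-1$, set $\alpha_i=n_i-\alpha\ge 1$ and $d=tp_i^{\alpha_i}$, so that $p_i^{n_i}\mid d\mid M$ and $t=d/p_i^{\alpha_i}$. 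Then (T2) for $A'_{p_i}$ gives $\Phi_t\mid A'_{p_i}$ for every translate $A'$, and Lemma~\ref{cycreduc} yields $\Phi_d\Phi_t\mid A$, hence $\Phi_t=\Phi_{s_1\cdots s_k}\mid A$. If instead some $s_j=p_i^{n_i}$, let $t=\prod_{l\neq j}s_l$, a divisor of $M_i$ and a product of distinct prime powers in $S_A\setminus\{p_i^{n_i}\}$; applying (T2) for $A'_{p_i}$ and then Lemma~\ref{cycreduc} with $\alpha_i=n_i$ and $d=tp_i^{n_i}=s_1\cdots s_k$ gives $\Phi_{s_1\cdots s_k}\mid A$.

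The principal obstacle is verifying the hypothesis of Lemma~\ref{cycreduc}, which demands $\Phi_t\mid A'_{p_i}$ for \emph{every} translate $A'$ rather than for a single translate. That requirement is precisely what the common-complement observation of the first paragraph secures, and it is the reason for insisting that all slabs $A'_{p_i}$ (not just $A_{p_i}$) satisfy (T2) in $\ZZ_{N_i}$. Once that is in place, the case distinction based on whether $p_i^{n_i}$ appears among the $s_j$ is straightforward bookkeeping.
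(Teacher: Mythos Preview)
Your proof is correct and follows essentially the same strategy as the paper: invoke condition~(i) of Theorem~\ref{subtile} to obtain the slab tilings $A'_{p_i}\oplus B=\ZZ_{N_i}$, apply the inductive hypothesis, deduce (T2) for $B$ directly, and for $A$ split into cases according to whether $p_i^{n_i}$ is among the $s_j$, using Lemma~\ref{cycreduc} to lift divisibility back to $\ZZ_M$. The one minor difference is in the case where none of the $s_j$ equals $p_i^{n_i}$: the paper observes that $A=\sum_{\nu=0}^{p_i-1}A_\nu$ with each $A_\nu$ a slab satisfying $\Phi_{s_1\cdots s_k}\mid A_\nu$, and simply sums to get $\Phi_{s_1\cdots s_k}\mid A$, whereas you route this case through Lemma~\ref{cycreduc} as well; both are valid, the paper's route is slightly more elementary there, and yours is more uniform.
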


\begin{proof}
We are assuming that $A'_{p_i}\oplus B=\ZZ_{M/p_i}$ for any translate $ A'$ of $ A $. By the inductive part of the assumption, $A'_{p_i}$ and $B$ satisfy (T2). It remains to prove (T2) for $ A $. Suppose that 
$d=\prod_{j\in J} p_j^{\alpha_j}$, where $J\subset \{1,\dots,K\}$, $1\leq \alpha_j\leq n_j$ for all $j\in J$, and
$\Phi_{p_j^{\alpha_j}}(X)|A(X)$ for all $j\in J$. 
For each prime power $p_j^{\alpha_j}$ with $\alpha_j\neq 0$, the polynomial $\Phi_{p_j^{\alpha_j}}(X)$ can divide only one of $A$ and $B$ in the tiling $A\oplus B=\ZZ_M$, hence 
\begin{equation}\label{ts-e6}
\Phi_{p_j^{\alpha_j}}\nmid B \ \ \forall j\in J.
\end{equation}

Write $ A(X)=\sum_{\nu=0}^{p_i-1} A_{\nu}(X) $, where $A_\nu$ are as in (\ref{subsets}), so that $A_\nu\oplus B=\ZZ_{N_i}$ for each $\nu$. 
We consider two cases.

\begin{itemize}
\item Assume that either $i\not\in J$, or $i\in J$ but $\alpha_i\neq n_i$. By (\ref{ts-e6}), we have $\Phi_{p_j^{\alpha_j}}|A_\nu$ for all $j\in J$ and $\nu=0,1,\dots,p_i-1$. We are assuming that (T2) holds for $A_\nu$, so that $\Phi_d|A_\nu$. Summing over $\nu$, we get that $\Phi_d|A$.

\item Assume now that $i\in J$ and $\alpha_i=n_i$, and let $d'=d/p_i^{n_i}$. Then $\Phi_{d'}|A'_{p_i}$ for any translate $A'$ of $A$, by the argument in the first case applied to $A'$ instead of $A$. By Lemma \ref{cycreduc}, it follows that $\Phi_d|A$.
\end{itemize}
\end{proof}

We note the following special case.

\begin{corollary}\label{Afibered} 
Assume that $ A\oplus B=\ZZ_M $, where $M=p_1^{n_1}p_2^{n_2}p_3^{n_3}$ and $p_1,p_2,p_3$ are distinct primes. 
Moreover, assume that there is a permutation $(i,j,k)$ of $(1,2,3)$ such that $|A|=p_i p_j^{\alpha_j} p_k^{\alpha_k}$ for some $0\leq\alpha_j\leq n_j$, $0\leq\alpha_k\leq n_k$, and that $A$ is $M$-fibered in the $p_i$ direction.  
Then $ A $ and $ B $ satisfy (T2). 
\end{corollary}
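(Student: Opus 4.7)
The plan is to invoke the slab reduction (Corollary \ref{slab-reduction}) in the $p_i$ direction, and then dispatch the resulting tilings of $\ZZ_{M/p_i}$ via Corollary \ref{almostsquarefree}.

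First I would verify the structural hypothesis of Corollary \ref{slab-reduction}, namely that $\Phi_{p_i^{n_i}}\mid A$ and that $A,B$ obey one of the equivalent conditions of Theorem \ref{subtile}. Since $A$ is $M$-fibered in the $p_i$ direction, we may write $A(X)=A^\circ(X)\,F_i(X)$ for some $A^\circ\subset\ZZ_M$, where $F_i(X)=\Phi_{p_i}(X^{M/p_i})$. Lemma \ref{cyclo-divisors}, applied with $N=M_i$ and $\alpha=n_i$, gives $\Phi_{p_i^{n_i}}(X)\mid F_i(X)$, hence $\Phi_{p_i^{n_i}}(X)\mid A(X)$. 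The remaining structural piece, that one of the conditions (i)--(iii) of Theorem \ref{subtile} is satisfied, is handled directly by Remark \ref{fiberingimpliesslab}.

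Next I would verify the inductive hypothesis of Corollary \ref{slab-reduction}, that (T2) holds for both factors of every tiling $A'\oplus B'=\ZZ_{M/p_i}$ with $|A'|=|A|/p_i=p_j^{\alpha_j}p_k^{\alpha_k}$ and $|B'|=|B|$. The modulus $M/p_i=p_i^{n_i-1}p_j^{n_j}p_k^{n_k}$ has at most three distinct prime divisors (only two when $n_i=1$), and crucially $p_i\nmid|A'|$, so $p_i$ divides at most one of $|A'|$ and $|B'|$. Reordering the primes so that $p_i$ is labeled as the ``third'' prime in the sense of Corollary \ref{almostsquarefree}, its hypothesis is satisfied, and (T2) for both $A'$ and $B'$ follows.

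Combining these two ingredients through Corollary \ref{slab-reduction} yields (T2) for $A$ and $B$. I do not anticipate any substantive obstacle: the argument is a clean composition of the slab-reduction mechanism with the ``at most two shared primes'' case of Coven--Meyerowitz, and the only small calculation is the divisibility $\Phi_{p_i^{n_i}}\mid F_i$, which is immediate from Lemma \ref{cyclo-divisors}. If anything, the only point that might force a rereading is checking that the ``for \emph{any} tiling with these cardinalities'' clause of Corollary \ref{slab-reduction} is met by Corollary \ref{almostsquarefree}, but this is transparent once one notes that the coprimality $(p_i,|A'|)=1$ is built into the fibering structure of $A$ and so holds for every such $A'$.
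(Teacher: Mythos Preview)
Your proposal is correct and follows exactly the route the paper takes: the paper's proof is the single line ``This follows from Corollary \ref{slab-reduction} and Corollary \ref{almostsquarefree},'' and you have simply unpacked the two hypotheses of Corollary \ref{slab-reduction} (the structural one via Remark \ref{fiberingimpliesslab}, the inductive one via Corollary \ref{almostsquarefree}) explicitly.
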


\begin{proof}
This follows from Corollary \ref{slab-reduction} and Corollary \ref{almostsquarefree}.
\end{proof}


\section{Saturating sets}\label{sec-satsets}

\subsection{Preliminaries}
\begin{definition}\label{restricted-boxes} {\bf (Restricted $N$-boxes)}
Let $A,X\subseteq\ZZ_M$, and $x\in\ZZ_M$.
The {\em restriction of $\bbA^N[x]$ to $X$} is the $N$-box $\bbA^N[x|X]$  
with entries
$$
\bbA^N_m[x|X] = \sum_{a\in  X: \ (x-a,N)=m} w^N_A(a), \ \ m|N.
$$
In particular,
$$
\bbA^M_m[x|X] = \# \{a\in A\cap X: \ (x-a,M)=m\}.
$$
\end{definition}

The next definition is the key to our analysis of unfibered tilings in \cite{LaLo2}. While 
it could be extended in an obvious way to $N$-boxes with $N|M$, our current arguments only use the $M$-box version below.

\begin{definition}\label{saturating2} {\bf (Saturating sets)}
Let $A,B\subseteq \ZZ_M$, and $x,y\in\ZZ_M$.
Define
$$
A_{x,y}:=\{a\in A:\ (x-a,M)=(y-b,M) \hbox{ for some }b\in B\},
$$
$$
A_{x}:=\bigcup_{b\in B} A_{x,b}.
$$
Equivalently,
\begin{equation}\label{saturate-e6}
A_x=\{a\in A: (x-a,M)\in\Div(B)\}.
\end{equation}
We will refer to $A_x$ as the {\em saturating set} for $x$. The sets $B_{y,x}$ and $B_y$ are defined similarly, with $A$ and $B$ interchanged.

\end{definition}

With the above notation, $A_{x,y}$ is the minimal set that saturates 
(the $A$-side of) the product $\langle \bbA^M[x], \bbB^M[y]\rangle$, in the sense that
\begin{equation}\label{saturate-e1}
\langle \bbA^M[x|X], \bbB^M[y]\rangle = \langle \bbA^M[x], \bbB^M[y]\rangle
\end{equation}
holds for $X=A_{x,y}$, and if $X\subset\ZZ_M$ is any other set for which (\ref{saturate-e1}) holds, then $A_{x,y}\subset X$. 
The set $A_x$ is the minimal set such that
\begin{equation}\label{saturate-e2}
\langle \bbA^M[x|A_x], \bbB^M[b]\rangle =\langle \bbA^M[x], \bbB^M[b]\rangle
\ \ \forall b\in B.
\end{equation}

While Definition \ref{saturating2} makes sense for general sets $A,B\subseteq \ZZ_M$, our intended application is
to the tiling situation $A\oplus B= \ZZ_M$. In that case, 
by Theorem \ref{ortho-lemma}, the box products on the right side of (\ref{saturate-e1}) and (\ref{saturate-e2}) evaluate to 1. Hence $A_{x,y}$ is the smallest set such that
\begin{equation}\label{saturate-e3}
\langle \bbA^M[x|A_{x,y}], \bbB^M[y]\rangle = 1,
\end{equation}
and $A_x$ is the smallest set such that
\begin{equation}\label{saturate-e4}
\langle \bbA^M[x|A_x], \bbB^M[b]\rangle =1
\ \ \forall b\in B.
\end{equation} 
Observe in particular that a saturating set for any $x\in\ZZ_M$ must be nonempty, and that by Theorem \ref{thm-sands},
\begin{equation}\label{saturate-e5}
A_a=\{a\} \ \ \ \forall a\in A.
\end{equation}

In the next few definitions and lemmas, we will work towards geometric descriptions of saturating sets. Assume that $A\oplus B= \ZZ_M$. Let $x\in\ZZ_M\setminus A$, and suppose that $a\in A_x$. By (\ref{saturate-e6}) and divisor exclusion, we must have $(x-a,M)\not\in\Div(A)$, and in particular $(x-a,M)\neq (a'-a,M)$ for all $a'\in A$. This motivates the definition below.


\begin{definition}\label{def-span}
Let $M=p_1^{n_1}\dots p_K^{n_K}$, where $p_1,\dots,p_K$ are distinct primes,
and let
$x,x'\in\ZZ_M$, $x\neq x'$. Suppose that $(x-x',M)=p_1^{\alpha_1}\dots p_K^{\alpha_K}$, with $0\leq \alpha_j\leq n_j$ for $j=1,\dots,K$.

Define
\begin{equation}\label{e-span}
\begin{split}
\Span(x,x')&=\bigcup_{i: \alpha_i<n_i} \Pi(x,p_i^{\alpha_i+1}),
\\
\Bispan(x,x')&= \Span(x,x')\cup \Span(x',x).
\end{split}
\end{equation}

\end{definition}

\medskip
\noindent
{\bf Examples:} Let $M=p_1^{n_1}p_2^{n_2}p_3^{n_3}$, where $p_1,p_2,p_3$ are distinct primes and $n_1,n_2,n_3\geq 2$. Let $x,x'\in \ZZ_M$, and let $m=(x-x',M)$.

\begin{itemize}
\item Suppose that $m=M/p_1p_2p_3$, so that $\alpha_i=n_i-1$ for $i=1,2,3$, and represent $\ZZ_M$ as a 3-dimensional $M$-array. Then $\Span(x,x')$ is the union of the 2-dimensional planes 
$\Pi(x,p_i^{n_i})$ with $i=1,2,3$, all passing through $x$, and similarly 
for $\Span(x',x)$, with $x$ and $x'$ interchanged.
Geometrically, $\Bispan(x,x')$ is the union of those 2-dimensional planes at the top scale that contain at least one 
2-dimensional face of the 3-dimensional rectangular box ``spanned" by $x$ and $x'$.

\item Suppose now that $m=M/p_i$ for some $i\in\{1,2,3\}$. Then 
$\Span(x,x')=\Pi(x,p_i^{n_i})$ is a single plane passing through $x$ and perpendicular to the $p_i$ direction, and, similarly, $\Span(x',x)=\Pi(x',p_i^{n_i})$. 

\item If $m=M/p_ip_j$ for some $i\neq j$, then $\Span(x,x')=\Pi(x,p_i^{n_i})\cup \Pi(x,p_j^{n_j})$ is a union of two planes.
\end{itemize}

The higher-dimensional case has a similar interpretation.
As should be clear from the above examples, 
the definition is not symmetric with respect to $x,x'$, so that $\Span(x,x')\neq \Span(x',x)$. 
However, we have the following.

\begin{lemma}\label{span-exchange}
Let $x,x'\in \ZZ_M$. Then 
$$x'\in \Span(x,z) \Leftrightarrow x\in  \Span(x',z).$$
\end{lemma}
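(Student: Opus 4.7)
The plan is to reduce the equivalence to a direct computation with $p_i$-adic valuations in the array coordinates of Section~\ref{sec-array}. Write $(x-z,M)=\prod_j p_j^{\alpha_j}$ and $(x'-z,M)=\prod_j p_j^{\alpha'_j}$, with $0\leq \alpha_j,\alpha'_j\leq n_j$ (the value $n_j$ corresponding to $\pi_j(x)=\pi_j(z)$ via Lemma~\ref{coord-properties}(ii)). By unpacking the definition, $x'\in \Span(x,z)$ is equivalent to the existence of an index $i$ with $\alpha_i<n_i$ such that $p_i^{\alpha_i+1}\mid (x-x')$ in $\ZZ_M$, or equivalently $p_i^{\alpha_i+1}\mid (x-x',M)$.

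The key step will be the following valuation identity: if $p_i^{\alpha_i}\parallel (x-z,M)$ with $\alpha_i<n_i$ and $p_i^{\alpha_i+1}\mid (x-x',M)$, then necessarily $\alpha'_i=\alpha_i$. Passing to the projections $\pi_i(x-z),\pi_i(x-x')\in\ZZ_{p_i^{n_i}}$, the first has $p_i$-valuation exactly $\alpha_i$ while the second has $p_i$-valuation at least $\alpha_i+1$. Writing
\[
\pi_i(x'-z)=\pi_i(x-z)-\pi_i(x-x'),
\]
the right-hand side has $p_i$-valuation exactly $\alpha_i$ (the smaller of the two), so $\alpha'_i=\alpha_i<n_i$.

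Combining these observations, if $x'\in \Span(x,z)$ is witnessed by the index $i$, then $\alpha'_i=\alpha_i<n_i$ and $p_i^{\alpha'_i+1}=p_i^{\alpha_i+1}\mid (x'-x,M)$, so $x\in \Pi(x',p_i^{\alpha'_i+1})\subseteq \Span(x',z)$. The reverse implication is obtained by swapping the roles of $x$ and $x'$ in the same argument, since the hypothesis and conclusion of the valuation identity are symmetric in the two points once one passes through the common base $z$. The only real obstacle is the bookkeeping around the convention $\alpha_j=n_j$ when $\pi_j(x)=\pi_j(z)$, but this case is excluded at the witnessing index by the requirement $\alpha_i<n_i$, so no substantive issue arises.
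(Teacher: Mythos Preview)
Your proof is correct and follows essentially the same approach as the paper's own proof. The paper simply states that $x'\in\Span(x,z)$ is equivalent to the existence of $i$ and $\alpha_i<n_i$ with $p_i^{\alpha_i}\parallel x-z$, $p_i^{\alpha_i}\parallel x'-z$, and $p_i^{\alpha_i+1}\mid x-x'$, and observes that this is symmetric in $x,x'$; your valuation identity is exactly the computation that shows the middle condition $p_i^{\alpha_i}\parallel x'-z$ is automatic from the other two, so you have spelled out the ``clearly'' step that the paper leaves implicit.
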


\begin{proof}
We have $x'\in \Span(x,z)$ if and only if there exist $i\in \{1,\dots,K\}$ 
and $0\leq \alpha_i<n_i$ such that $p_i^{\alpha_i}\parallel x-z$, $p_i^{\alpha_i} \parallel x'-z$,
and $p_i^{\alpha_i+1}\,|\, x-x'$. These conditions are clearly symmetric with respect to $x$ and $x'$.
\end{proof}

\begin{lemma}\label{span-lemma2}
If $(x-x',M)=(x-x'',M)=m$, then $\Span(x,x')=\Span(x,x'')$.
\end{lemma}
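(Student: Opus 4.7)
The proof will be essentially an unpacking of Definition \ref{def-span}, observing that $\Span(x,x')$ depends on $x'$ only through the factorization of the gcd $(x-x',M)$.

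My plan is as follows. Write $m = p_1^{\alpha_1}\cdots p_K^{\alpha_K}$ with $0\leq \alpha_i\leq n_i$. Since $(x-x',M)=m$, the exponents appearing in the definition of $\Span(x,x')$ are exactly these $\alpha_i$, and the index set $\{i:\alpha_i<n_i\}$ is determined purely by $m$. Therefore
\begin{equation*}
\Span(x,x') = \bigcup_{i:\,\alpha_i<n_i} \Pi(x,p_i^{\alpha_i+1}),
\end{equation*}
and each plane $\Pi(x,p_i^{\alpha_i+1})= \Lambda(x,p_i^{\alpha_i+1})$ depends only on $x$ and on the exponent $\alpha_i+1$, not on $x'$ itself.

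Applying the identical argument to $x''$ in place of $x'$: since $(x-x'',M)=m$ has the same prime factorization, the same index set $\{i:\alpha_i<n_i\}$ arises, and the same planes $\Pi(x,p_i^{\alpha_i+1})$ appear in the union. Hence $\Span(x,x'')=\Span(x,x')$.

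There is no real obstacle here; the lemma is a direct consequence of the fact that Definition \ref{def-span} uses $x'$ only via the divisor $(x-x',M)$. The statement is worth recording because it will let us later speak of $\Span(x,m)$ depending only on $x$ and the divisor $m\in\Div(\ZZ_M)$, which will be convenient when working with saturating sets whose elements are characterized by their gcd with a given point.
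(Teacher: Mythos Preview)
Your proof is correct and follows the same approach as the paper's own proof, which simply states ``This follows directly from the definition.'' You have merely made explicit the observation that $\Span(x,x')$ depends on $x'$ only through the prime factorization of $(x-x',M)$.
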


\begin{proof}
This follows directly from the definition.
\end{proof}

\begin{lemma}\label{span-lemma}
Let $A,B\subset\ZZ_M$, $x,x'\in \ZZ_M$, and $a\in A$. If $(x-a,M) \neq (x'-a,M)$,
then $a\in \Bispan(x,x')$.
\end{lemma}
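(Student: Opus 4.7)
The plan is to translate the hypothesis $(x-a,M)\neq(x'-a,M)$ into a statement about $p_i$-adic valuations on each coordinate axis via Section \ref{sec-array}, and then extract the desired plane by invoking the strong triangle (ultrametric) inequality.

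First, I would write
$$
(x-x',M)=\prod_i p_i^{\alpha_i},\qquad (x-a,M)=\prod_i p_i^{\beta_i},\qquad (x'-a,M)=\prod_i p_i^{\gamma_i},
$$
and use Lemma \ref{coord-properties}(ii) together with (\ref{array-gcd}) to reinterpret $\alpha_i,\beta_i,\gamma_i$ as the $p_i$-adic valuations of $\pi_i(x)-\pi_i(x')$, $\pi_i(x)-\pi_i(a)$, $\pi_i(x')-\pi_i(a)$ respectively (with the convention that the valuation equals $n_i$ exactly when the corresponding coordinate difference vanishes in $\ZZ_{p_i^{n_i}}$). Since the two tuples $(\beta_i)$ and $(\gamma_i)$ are different by hypothesis, I may fix an $i$ with $\beta_i\neq\gamma_i$.

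Next, the identity $\pi_i(x)-\pi_i(x')=(\pi_i(x)-\pi_i(a))-(\pi_i(x')-\pi_i(a))$, combined with $\beta_i\neq\gamma_i$, forces $\alpha_i=\min(\beta_i,\gamma_i)$ by the ultrametric property. I would then split into the two symmetric cases. If $\beta_i<\gamma_i$, then $\alpha_i=\beta_i<\gamma_i\leq n_i$, so on the one hand $\alpha_i<n_i$ (so the $i$-th factor appears in the union defining $\Span(x',x)$), and on the other hand $\gamma_i\geq\alpha_i+1$ yields $p_i^{\alpha_i+1}\mid \pi_i(x')-\pi_i(a)$, i.e.\ $a\in\Pi(x',p_i^{\alpha_i+1})\subset\Span(x',x)\subset\Bispan(x,x')$. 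The symmetric case $\gamma_i<\beta_i$ places $a$ in $\Span(x,x')$ by the same argument with the roles of $x$ and $x'$ interchanged.

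There is no real obstacle; the lemma is a formal consequence of the ultrametric property and the definitions in \eqref{e-span}. The only subtlety to watch is the boundary behaviour when one of $\beta_i,\gamma_i$ equals $n_i$, which is already absorbed in the inequality $\alpha_i=\min(\beta_i,\gamma_i)<\max(\beta_i,\gamma_i)\leq n_i$ used above.
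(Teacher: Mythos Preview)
Your proof is correct and follows essentially the same approach as the paper: pick a prime $p_i$ where the $p_i$-adic valuations of $x-a$ and $x'-a$ differ, apply the ultrametric inequality to identify $\alpha_i$ with the smaller of the two, and then read off membership in the appropriate plane $\Pi(\cdot,p_i^{\alpha_i+1})$. The paper's version is simply more terse (it phrases the case split as ``interchanging $x$ and $x'$ if necessary'') but the content is identical.
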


\begin{proof}
Suppose that $(x-a,M)\neq (x'-a,M)$. It follows that $(x-a,p_i^{n_i})\neq (x'-a,p_i^{n_i})$ for some 
$i\in\{1,\dots,K\}$. Interchanging $x$ and $x'$ if necessary, we may assume that
$p_i^{\alpha_i}\parallel x-a$ and $p_i^{\alpha_i+1}\,|\, x'-a$ for some 
$\alpha_i\in\{0,1,\dots, n_i-1\}$. Hence $p_i^{\alpha_i}\parallel x-x'$, and 
$a\in\Span (x',x)$.
\end{proof}

\begin{lemma}\label{saturating-basics}
Let $A,B\subset\ZZ_M$ be fixed. Then: 
\smallskip

(i) For any $x,x',y\in\ZZ_M$, we have
\begin{equation}\label{setplusspan}
 A_{x',y}\subset A_{x,y}\cup\Bispan(x,x').
\end{equation}
\smallskip

(ii) Suppose that $A\oplus B=\ZZ_M$. Then for any $x\in\ZZ_M$,
\begin{equation}\label{bispan}
A_x \subset \bigcap_{a\in A} \Bispan (x,a).
\end{equation}

\end{lemma}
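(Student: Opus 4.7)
The plan is to prove (i) by a direct case analysis on the pair of gcd's $(x-a,M)$ and $(x'-a,M)$, and then to derive (ii) from (i) by specializing $x'$ to each point $a\in A$ and invoking the identity $A_a=\{a\}$ recorded in (\ref{saturate-e5}).

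For (i), I would fix $a\in A_{x',y}$, so that by Definition \ref{saturating2} there exists $b\in B$ with $(x'-a,M)=(y-b,M)$. Then I split on whether $(x-a,M)$ equals $(x'-a,M)$. If it does, then $(x-a,M)=(y-b,M)$ with the \emph{same} $b\in B$, which by definition places $a\in A_{x,y}$. If it does not, then Lemma \ref{span-lemma}, applied to the pair $x,x'$, immediately gives $a\in\Bispan(x,x')$. This exhausts both cases and yields (i). Note that this argument uses no tiling hypothesis, consistent with the statement of (i).

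For (ii), fix $a\in A$ with $a\neq x$. Applying (i) with the roles of the two free arguments swapped (that is, using (i) with ``$x$'' replaced by $a$ and ``$x'$'' replaced by $x$) gives
$$A_{x,y}\subset A_{a,y}\cup\Bispan(a,x)=A_{a,y}\cup\Bispan(x,a)\qquad\forall\,y\in\ZZ_M,$$
where the equality is the definitional symmetry $\Bispan(a,x)=\Bispan(x,a)$ built into Definition \ref{def-span}. Taking the union over $y\in B$ yields $A_x\subset A_a\cup\Bispan(x,a)$. The tiling hypothesis enters exactly once, through (\ref{saturate-e5}), which reduces this to $A_x\subset\{a\}\cup\Bispan(x,a)$. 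The remaining point is to verify $a\in\Bispan(x,a)$: writing $(a-x,M)=\prod_i p_i^{\alpha_i}$, the condition $a\neq x$ forces $\alpha_i<n_i$ for at least one $i$, and the plane $\Pi(a,p_i^{\alpha_i+1})$ appearing in $\Span(a,x)$ trivially contains $a$. Hence $\{a\}\cup\Bispan(x,a)=\Bispan(x,a)$, and intersecting over all such $a\in A$ produces (ii). The borderline case $x\in A$ is handled by (\ref{saturate-e5}) directly, which gives $A_x=\{x\}\subset\Bispan(x,a)$ for every $a\in A$ with $a\neq x$ by the same containment just verified.

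I do not anticipate a serious obstacle. The only bookkeeping points are (a) tracking the correct direction of the relabeling when feeding (i) into (ii), since the inclusion in (i) is not symmetric in $x,x'$ even though $\Bispan$ is, and (b) verifying cleanly that $a\in\Bispan(x,a)$ whenever $a\neq x$, so that the singleton $\{a\}$ produced by $A_a$ can be absorbed into the bispan before intersecting.
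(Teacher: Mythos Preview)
Your proposal is correct and follows essentially the same approach as the paper: part (i) is proved by the identical case split on whether $(x-a,M)=(x'-a,M)$, invoking Lemma \ref{span-lemma} in the unequal case, and part (ii) is deduced from (i) together with (\ref{saturate-e5}) and the observation that $a\in\Span(a,x)\subset\Bispan(x,a)$. You are simply more explicit than the paper in spelling out the relabeling, the union over $y\in B$, and the borderline case $x\in A$.
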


\begin{proof}
To prove (i), suppose that $a\in A_{x',y}$. Then $(x'-a,M)=(y-b,M)$ for some $b\in B$. If $(x-a,M)=(x'-a,M)$, it follows that $a\in A_{x,y}$. If on the other hand $(x-a,M)\neq(x'-a,M)$, then by Lemma \ref{span-lemma} we must have $a\in\Bispan(x,x')$. This proves (\ref{setplusspan}). Part (ii) follows from (i) and (\ref{saturate-e5}), since $a\in\Span(a,x)$.
\end{proof}

We note a lemma which will be useful in the evaluation of saturating sets.

\begin{lemma}\label{triangles} {\bf (Enhanced divisor exclusion)}
Let $A\oplus B=\ZZ_M$, with $M=\prod_{i=1}^K p_i^{n_i}$. Let $m=\prod_{i=1}^K p_i^{\alpha_i}$
and $m'=\prod_{i=1}^K p_i^{\alpha'_i}$, with $0\leq \alpha_i,\alpha'_i\leq n_i$. Assume that at least
one of $m,m'$ is different from $M$, and that for every $i=1,\dots,K$ we have either $\alpha_i\neq \alpha'_i$
or $\alpha_i =\alpha'_i=n_i$. Then for all $x,y\in\ZZ_M$ we have
$$
\bbA^M_m[x] \,\bbA^M_{m'}[x] \, \bbB^M_m[y] \, \bbB^M_{m'}[y] =0.
$$
In other words, there are no configurations $(a,a',b,b')\in A\times A\times B\times B$ such that
\begin{equation}\label{step}
(a-x,M)=(b-y,M)=m,\ \ (a'-x,M)=(b'-y,M)=m'.
\end{equation}
\end{lemma}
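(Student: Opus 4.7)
The plan is to argue by contradiction using Sands's divisor exclusion theorem (Theorem \ref{thm-sands}), which says that for the tiling $A\oplus B=\ZZ_M$ we have $\Div(A)\cap\Div(B)=\{M\}$. So I will assume that a quadruple $(a,a',b,b')\in A\times A\times B\times B$ satisfying (\ref{step}) exists and produce a common element of $\Div(A)\cap\Div(B)$ that is strictly smaller than $M$.

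First I would compute $(a-a',M)$ and $(b-b',M)$ prime by prime. Writing $a-a'=(a-x)-(a'-x)$, I examine each factor $p_i$ separately. Under the hypothesis, for each $i$ one of two cases occurs: either $\alpha_i\neq \alpha_i'$, in which case the standard ultrametric property of $v_{p_i}$ gives $v_{p_i}(a-a')=\min(\alpha_i,\alpha_i')<n_i$, and likewise $v_{p_i}(b-b')=\min(\alpha_i,\alpha_i')$ since $v_{p_i}(b-y)=\alpha_i$ and $v_{p_i}(b'-y)=\alpha_i'$ (where $v_{p_i}$ is interpreted in $\ZZ_M$, so values are capped at $n_i$); or else $\alpha_i=\alpha_i'=n_i$, in which case $p_i^{n_i}$ divides both $a-x$ and $a'-x$, hence divides $a-a'$, and similarly for $b-b'$. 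In both cases the contribution of $p_i$ to $(a-a',M)$ and to $(b-b',M)$ is the same, namely $p_i^{\min(\alpha_i,\alpha_i')}$ (with the convention that this means $p_i^{n_i}$ when $\alpha_i=\alpha_i'=n_i$).

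Setting $d:=\prod_{i=1}^K p_i^{\min(\alpha_i,\alpha_i')}$, the previous step shows $(a-a',M)=(b-b',M)=d$. The remaining point is that $d\neq M$. Indeed, $d=M$ would force $\min(\alpha_i,\alpha_i')=n_i$ for every $i$; combined with the hypothesis that for each $i$ either $\alpha_i\neq\alpha_i'$ or $\alpha_i=\alpha_i'=n_i$, this forces $\alpha_i=\alpha_i'=n_i$ for all $i$, i.e.\ $m=m'=M$, contradicting the assumption that at least one of $m,m'$ is distinct from $M$. The same argument shows $m\neq m'$, so in particular $a\neq a'$ and $b\neq b'$, making $d\in\Div(A)\cap\Div(B)$ a genuine common divisor. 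Since $d\neq M$, this contradicts Theorem \ref{thm-sands}.

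There is no real obstacle here; the lemma is essentially a direct consequence of Sands's theorem once one notes the ultrametric cancellation at each prime. The only point needing minor care is ruling out the degenerate case $d=M$, which is exactly why the hypothesis excludes $m=m'=M$.
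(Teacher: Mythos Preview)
Your proof is correct and follows essentially the same approach as the paper: assume a configuration as in (\ref{step}) exists, compute $(a-a',M)=(b-b',M)=\prod_i p_i^{\min(\alpha_i,\alpha'_i)}$ via the ultrametric property at each prime, observe this common value is not $M$, and contradict Theorem~\ref{thm-sands}. You have supplied more detail than the paper (which compresses the argument into two sentences), but the structure is identical.
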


\begin{proof}
If we did have a configuration as in (\ref{step}), then, under the assumptions of the lemma,
we would have
$$
(a-a',M)=(b-b',M)=\prod_{i=1}^K p_i^{\min(\alpha_i,\alpha'_i)},
$$
with the right side different from $M$. But that is prohibited by Theorem \ref{thm-sands}.
\end{proof}

\subsection{Examples and applications}

We first provide examples of using Lemma \ref{saturating-basics} to derive
geometric constraints on saturating sets. 
For simplicity, in the examples below we return to the 3-prime case with $M=p_1^{n_1}p_2^{n_2}p_3^{n_3}$, where $p_1,p_2,p_3$ are distinct primes and $n_1,n_2,n_3\geq 2$. Assume that $A\oplus B=\ZZ_M$ is a tiling, and let $x\in \ZZ_M$. 


\begin{itemize}
	\item Suppose that $(x-a,M)=M/p_i$ for some $a\in A$ and $i\in\{1,2,3\}$. Then
	$$
	A_x\subset \Bispan(x,a)=\Pi(x,p_i^{n_i}) \cup\Pi(a,p_i^{n_i}). 
	$$
	\setlength\intextsep{1pt}
	\begin{figure}[h]
		\includegraphics[scale=0.75]{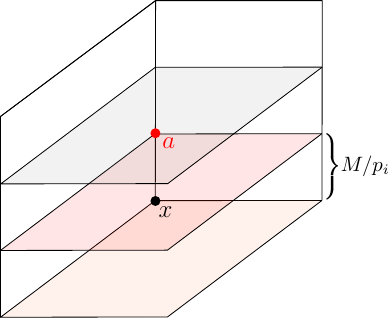}
	\end{figure}

	\item Suppose that there are two distinct elements $a,a'\in A$ such that $(x-a,M)=(x-a',M)=M/p_i$. Then
	$$
	A_x\subset \Bispan(x,a)\cap\Bispan(x,a')=\Pi(x,p_i^{n_i}) .
	$$
	
	\setlength\intextsep{1pt}
	\begin{figure}[h]
		\includegraphics[scale=0.75]{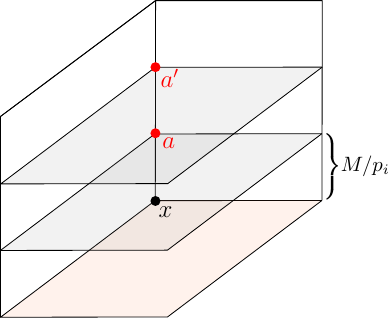}
	\end{figure}
	\item Suppose that there are two elements $a_i,a_j\in A$ such that $(x-a_i,M)=M/p_i$ and $(x-a_j,M)=M/p_j$, with $i,j\in\{1,2,3\}$ distinct.
	Then
	$$
	A_x\subset \Bispan(x,a_i)\cap\Bispan(x,a_j)=\ell_k(x)\cup\ell_k(a_i)\cup\ell_k(a_j)\cup\ell_k(x_{ij}),
	$$
	where $\{1,2,3\}\setminus\{i,j\}=\{k\}$, and 
	$x_{ij}\in\ZZ_M$ is the unique point such that $(x_{ij}-a_i,M)=M/p_j$ and $(x_{ij}-a_j,M)=M/p_i$.
	\item 
	Suppose that $(x-a,M)=M/p_ip_j$ for some $a\in A$ and $i,j\in\{1,2,3\}$ distinct. Then
	$$
	A_x\subset \Pi(x,p_i^{n_i}) \cup\Pi(a,p_i^{n_i})\cup  \Pi(x,p_i^{n_j}) \cup\Pi(a,p_i^{n_j}).
	$$
	\item We leave it as an easy exercise for the reader to verify that if there are $a,a',a''\in A$ such that
	$(z-z',M)=M/p_ip_j$ for all pairs of distinct elements $z,z'\in\{x,a,a',a''\}$, then
	$$
	A_x\subset \Pi(x,p_i^{n_i}) \cup  \Pi(x,p_i^{n_j}) .
	$$
	
	\item Suppose that $x\in \ZZ_M\setminus A$ and $y\in \ZZ_M\setminus B$ with
	\begin{equation}\label{sat-layers-0}
	(x-a,M)=(y-b,M)=M/p_i \hbox{ for some } a\in A,b\in B.
	\end{equation}
	We claim that
	\begin{equation}\label{sat-layers}
		A_{x,y}\subset\Pi(x,p_i^{n_i-1}),\ \ B_{y,x}\subset\Pi(y,p_i^{n_i-1}).
	\end{equation}
	One way to prove this is as follows. Let $a\in A$ and $b\in B$ be as in (\ref{sat-layers-0}). As in the first example above, we have 
	$A_{x,b}\subset \Bispan(x,a)\subset \Pi(x,p_i^{n_i-1})$. Hence $B_{b,x} \subset \Pi(b,p_i^{n_i-1})=\Pi(y,p_i^{n_i-1})$.
	Applying (\ref{setplusspan}) to $B$, and using that $\Bispan(y,b)\subset \Pi(y,p_i^{n_i-1})$, we get that
	$B_{y,x} \subset \Pi(y,p_i^{n_i-1})$ as claimed. This also implies the first half of (\ref{sat-layers}).
	Alternatively, (\ref{sat-layers}) can also be deduced from Lemma \ref{triangles}.
	
\end{itemize}

Saturating sets are very useful in identifying configurations that {\em cannot} occur in tiling complements. For example, we have the following easy but important lemma.

\begin{lemma}[\bf No missing joints]\label{smallcube}
Let $A\oplus B=\ZZ_M$, where 
$M=p_1^{n_1}\dots p_K^{n_K}$. Suppose that
\begin{equation}\label{notopdivB}
\{D(M)|m|M\}\cap \Div(B)=\emptyset,
\end{equation}
and that for some $x\in\ZZ_M$ there exist $a_1,\dots, a_K\in A$ such that 
\begin{equation}\label{joint-e}
(x-a_i,M)=M/p_i \ \ \forall i\in\{1,\dots,K\}.
\end{equation}
Then $x\in A$.
\end{lemma}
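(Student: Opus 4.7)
My approach would be by contradiction: assume $x\notin A$ and show that the saturating set $A_x$ from Definition \ref{saturating2} must be simultaneously non-empty and empty.

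First, to verify $A_x\neq\emptyset$, I would use the tiling property directly. Since $A\oplus B=\ZZ_M$, there are unique $a\in A$ and $b\in B$ with $x=a+b$, and then $(x-a,M)=(b,M)=(b-0,M)\in\Div(B)$ (using $0\in B$). By characterization (\ref{saturate-e6}), $a\in A_x$.

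Second, I would use the hypothesized points $a_1,\dots,a_K\in A$ to pin down $A_x$ geometrically via (\ref{bispan}). Since $(x-a_i,M)=M/p_i$ corresponds to $\alpha_i=n_i-1$ and $\alpha_j=n_j$ for $j\neq i$, the definition of $\Span$ collapses to a single plane and
\[
\Bispan(x,a_i)=\Pi(x,p_i^{n_i})\cup\Pi(a_i,p_i^{n_i}).
\]
Now $\Pi(z,p_i^{n_i})$ is the set of elements of $\ZZ_M$ with a prescribed residue mod $p_i^{n_i}$, so by the Chinese Remainder Theorem $\bigcap_{i=1}^K\Bispan(x,a_i)$ consists of exactly the $2^K$ ``corner'' points $\{x_\epsilon:\epsilon\in\{0,1\}^K\}$, where $x_\epsilon$ is determined by $x_\epsilon\equiv a_i \pmod{p_i^{n_i}}$ if $\epsilon_i=1$ and $x_\epsilon\equiv x\pmod{p_i^{n_i}}$ if $\epsilon_i=0$. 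Note that $x_{\vec 0}=x$ and $x_{e_i}=a_i$, and by (\ref{bispan}), $A_x\subset\{x_\epsilon\}_{\epsilon\in\{0,1\}^K}$.

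Third, I would eliminate every $x_\epsilon$ with $\epsilon\neq\vec 0$ using the hypothesis (\ref{notopdivB}). A direct computation gives
\[
(x-x_\epsilon,M)=\frac{M}{\prod_{i:\epsilon_i=1}p_i},
\]
which satisfies $D(M)\mid (x-x_\epsilon,M)\mid M$ with $(x-x_\epsilon,M)\neq M$ whenever $\epsilon\neq\vec 0$. By hypothesis (\ref{notopdivB}) (read as excluding only the trivial divisor $M$, which is automatically in $\Div(B)$), this gcd does not lie in $\Div(B)$, so by (\ref{saturate-e6}) we conclude $x_\epsilon\notin A_x$. Therefore $A_x\subset\{x\}$, and combining with $x\notin A$ and $A_x\subset A$ gives $A_x=\emptyset$, contradicting the first step. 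Hence $x\in A$.

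I do not anticipate a serious obstacle here; the cleanest part is the CRT reduction identifying the $2^K$ corner points, and the only slightly delicate issue is the interpretation of (\ref{notopdivB}), where one must read the hypothesis as excluding the trivial divisor $M$ so that the statement is not vacuous.
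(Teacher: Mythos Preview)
Your argument is correct and follows the same line as the paper's proof: assume $x\notin A$, use (\ref{bispan}) to confine $A_x$ to the vertex set of the $M$-cuboid determined by $x,a_1,\dots,a_K$, and then use (\ref{notopdivB}) to rule out all vertices other than $x$, contradicting the nonemptiness of $A_x$. Your version simply makes explicit the CRT identification of $\bigcap_i\Bispan(x,a_i)$ with the $2^K$ cuboid vertices and the gcd computation $(x-x_\epsilon,M)=M/\prod_{\epsilon_i=1}p_i$, both of which the paper leaves implicit; your remark about reading (\ref{notopdivB}) modulo the trivial divisor $M$ is also correct.
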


\begin{proof}
Suppose that $x\not\in A$, and let $\Delta$ be the $M$-cuboid with vertices $x,a_1,\dots,a_K$. By (\ref{joint-e}) and (\ref{bispan}), the saturating set $A_x$ is contained in the vertex set of $\Delta$. But that is impossible by (\ref{notopdivB}).
\end{proof}

As an application, we prove the following restriction on fibered grids that can be a part of a tiling set. 

\begin{proposition}\label{prop-twodirections}
Let $M=p_1^{n_1}p_2^{n_3}p_2^{n_3}$. Assume that $A\oplus B=\ZZ_M$ is a tiling,
and that there exists a $D(M)$-grid $\Lambda$ such that $A\cap \Lambda$ is a nonempty union of disjoint $M$-fibers.
Then there is a subset $\{\nu_1,\nu_2\}\subset\{1,2,3\}$ of cardinality 2 such that $A\cap\Lambda$ is a union of disjoint $M$-fibers in the $p_{\nu_1}$ and $p_{\nu_2}$ directions.
\end{proposition}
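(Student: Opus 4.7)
My plan is to argue by contradiction, using Lemma~\ref{smallcube}. Suppose $A\cap\Lambda$ contains disjoint $M$-fibers in all three directions; pick one such fiber $F^{(k)}\subset A\cap\Lambda$ in direction $p_k$ for each $k\in\{1,2,3\}$. In the $M$-array coordinates, parametrise $F^{(k)}$ by its two fixed top-digit coordinates $\beta_j^{(k)}\in\{0,\dots,p_j-1\}$ for $j\neq k$. Pairwise disjointness of $F^{(1)},F^{(2)},F^{(3)}$ inside $A\cap\Lambda$ is then equivalent to the three inequalities
$$\beta_3^{(1)}\neq\beta_3^{(2)},\qquad \beta_2^{(1)}\neq\beta_2^{(3)},\qquad \beta_1^{(2)}\neq\beta_1^{(3)}.$$
Define $x\in\Lambda$ to be the point whose top digits are $(\beta_1^{(2)},\beta_2^{(3)},\beta_3^{(1)})$ and whose lower digits agree with those of $\Lambda$.

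A direct coordinate computation produces three axis neighbours of $x$ in $A$,
$$a_1=(\beta_1^{(3)},\beta_2^{(3)},\beta_3^{(1)})\in F^{(3)},\quad a_2=(\beta_1^{(2)},\beta_2^{(1)},\beta_3^{(1)})\in F^{(1)},\quad a_3=(\beta_1^{(2)},\beta_2^{(3)},\beta_3^{(2)})\in F^{(2)},$$
each satisfying $(x-a_k,M)=M/p_k$ by virtue of one of the three disjointness inequalities. To verify that $x\notin A$, I would argue that $x$ cannot lie on any fiber in the disjoint decomposition of $A\cap\Lambda$: if $x$ belonged to a direction-$p_k$ fiber $F'\subset A\cap\Lambda$, then $F'$ would also contain $a_k$ (which agrees with $x$ in every coordinate transverse to $p_k$), but $a_k$ already lies on $F^{(?)}$ with $?\neq k$, contradicting pairwise disjointness of the fiber decomposition.

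The remaining step is to verify the hypothesis of Lemma~\ref{smallcube}, namely that no $m$ with $D(M)\mid m\mid M$ and $m\neq M$ belongs to $\Div(B)$. By Sands's theorem (Theorem~\ref{thm-sands}) it is enough to show that every such $m$ lies in $\Div(A)$. The three divisors $M/p_k$ are immediately in $\Div(A)$, being intra-fiber differences in $F^{(k)}\subset A$. For a face-diagonal divisor $M/(p_ip_j)$ with $k$ the remaining index, I would pick $a\in F^{(i)}$ and $a'\in F^{(k)}$ with $\pi_k(a')=\beta_k^{(i)}$ and $\pi_i(a)\neq\beta_i^{(k)}$; the remaining coordinate mismatch $\pi_j(a)=\beta_j^{(i)}\neq\beta_j^{(k)}=\pi_j(a')$ is precisely the disjointness of $F^{(i)}$ and $F^{(k)}$, so $(a-a',M)=M/(p_ip_j)$. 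An analogous construction using all three disjointness inequalities simultaneously realises the body-diagonal divisor $D(M)$ as a difference in $A$. Lemma~\ref{smallcube} then forces $x\in A$, contradicting the previous paragraph, so $A\cap\Lambda$ cannot involve all three directions.

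The main obstacle I anticipate is precisely this last ingredient: having fibers in three directions only gives $M/p_k\in\Div(A)$ for free, whereas Lemma~\ref{smallcube} needs the substantially stronger exclusion of every $m$ with $D(M)\mid m\mid M$, $m\neq M$, from $\Div(B)$. Producing the face and body diagonals in $\Div(A)$ requires using the chosen fibers in pairs, and as a triple, with all three disjointness inequalities simultaneously pulling their weight; once this is in place, the remainder of the argument is a short application of Lemma~\ref{smallcube}.
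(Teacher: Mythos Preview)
Your proof is correct and follows essentially the same approach as the paper's: assume a decomposition of $A\cap\Lambda$ uses all three directions, locate a point $x$ with three axis-neighbours in $A$ coming from the three chosen fibers, verify the divisor hypothesis of Lemma~\ref{smallcube}, and obtain the contradiction that $x$ must lie in $A$ yet cannot be assigned any fiber direction. The paper encodes the decomposition via an assignment function $\kappa$ and defines $x$ as an intersection of three planes, but your explicit coordinate computation yields exactly the same point; your argument for $x\notin A$ is identical to the paper's observation that no value of $\kappa(x)$ is consistent.

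One small remark: the step you flag as the ``main obstacle'' is in fact routine (the paper simply asserts $\{m:D(M)\mid m\mid M\}\subset\Div(A\cap\Lambda)$ without comment). Your verification of the face and body diagonals is fine, though slightly overstated: realising $D(M)$ as a difference needs only \emph{one} of the disjointness inequalities (take $a\in F^{(1)}$ and $a'\in F^{(2)}$; the third coordinate differs by the disjointness of $F^{(1)}$ and $F^{(2)}$, and the other two coordinates can be freely chosen to differ since each $p_\nu\geq 2$), not all three simultaneously. Also, make sure your opening sentence is read as assuming a \emph{decomposition} that uses all three directions, not merely the existence of three pairwise disjoint fibers; your $x\notin A$ step relies on $F^{(1)},F^{(2)},F^{(3)}$ being members of a single disjoint fiber decomposition of $A\cap\Lambda$.
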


\begin{proof}
Fix $A$ and $\Lambda$ as in the statement of the proposition.
We will say that $\kappa:A\cap\Lambda\to\{1,2,3\}$ is an {\em assignment function} if $A\cap\Lambda$ can be written as
$$
A\cap\Lambda=\bigcup_{a\in A\cap\Lambda} (a*F_{\kappa(a)}),
$$
where for any $a,a'\in A\cap\Lambda$, the fibers $a*F_{\kappa(a)}$ and $a'*F_{\kappa(a')}$ are either identical or disjoint.
Thus, if $a'\in a*F_{\kappa(a)}$, then $\kappa(a')=\kappa(a)$.
 Note that $\kappa$ is not necessarily unique, since there exist sets that can be split into nonintersecting fibers in more than one way. We will use $\Xi$ to denote the set of all assignment functions for $A\cap\Lambda$.

It suffices to prove that any assignment function $\kappa\in\Xi$ may take at most two values. To prove this, assume for contradiction that there exists $\kappa\in\Xi$ such that $\kappa(a_1)=1$, $\kappa(a_2)=2$, $\kappa(a_3)=3$ for some 
$a_1,a_2,a_3\in A\cap\Lambda$. Then the fibers $a_1*F_1$, $a_2*F_2$, $a_3*F_3$ are contained in $A$ and pairwise disjoint. 

Let $x\in\Lambda$ be the point such that
$$
\Pi(a_1,p_2^{n_2})\cap \Pi(a_2,p_3^{n_3})\cap \Pi(a_3,p_1^{n_1}) = \{x\}.
$$
Then there are points $a'_1\in a_1*F_1$, $a'_2\in a_2*F_2$, $a'_3\in a_3*F_3$ such that
$$
(x-a'_1,M)=M/p_3,\ (x-a'_2,M)=M/p_1,\ (x-a'_3,M)=M/p_2.
$$
Moreover, $\{D(M)|m|M\}\subset\Div(A\cap\Lambda)$, hence (\ref{notopdivB}) holds. By Lemma \ref{smallcube}, we must have $x\in A$. However, there is no permitted value for $\kappa(x)$, since $x*F_1$ intersects $a_2*F_2$, 
$x*F_2$ intersects $a_3*F_3$, and $x*F_3$ intersects $a_1*F_1$. This contradicts the definition of $\kappa$.
\end{proof}

For example, under the assumptions of Proposition \ref{prop-twodirections}, if $A\oplus B=\ZZ_M$ is a tiling, then $A\cap\Lambda$ cannot consist of three nonintersecting fibers in different directions.

\begin{figure}[h]
\captionsetup{justification=centering}
\includegraphics[scale=0.7]{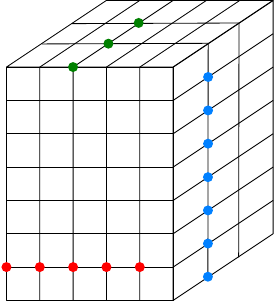}
\caption{A $D(M)$ grid with disjoint M-fibers in all 3 directions}
\end{figure}

\begin{remark}
Suppose that $\Phi_M|A$, where $M=p_1^{n_1}p_2^{n_2}p_3^{n_3}$. Let $\Lambda$ be a $D(M)$-grid such that $A\cap\Lambda\neq\emptyset$. As discussed in Section \ref{classic-cuboids}, $A\cap\Lambda(X)$ can be written as 
\begin{equation}\label{twooutofthree}
(A\cap\Lambda)(X)=\sum_{\nu\in\{1,2,3\}} Q_\nu(X) F_\nu(X),
\end{equation}
where $Q_1,Q_2,Q_3$ are polynomials with integer coefficients depending on both $A$ and $\Lambda$. If, in addition, $Q_1,Q_2,Q_3$ are polynomials with {\em nonnegative} coefficients, then $A\cap\Lambda$ is a nonempty union of disjoint $M$-fibers. By Proposition \ref{prop-twodirections}, if $A\oplus B=\ZZ_M$ is a tiling, then $A\cap\Lambda$ can be written in the form (\ref{twooutofthree}) with at least one of $Q_1,Q_2,Q_3$ equal to $0$.
\end{remark}

It is likely that some consistency conditions of this type occur more broadly in tiling sets. For example, in part (IIa) of \cite[Theorem 9.1]{LaLo2}, we prove a much more difficult and technical result of this type on a lower scale.




\section{Fibers and cofibers}\label{sec-fibers}


\subsection{Fibers and fiber chains}

\begin{definition}\label{standard-fibers}
Let $N|M$, and assume that $p_i^\delta\mid N$ for some $\delta\geq 1$. Define
\begin{equation}\label{sf-def2}
\Psi_{N/p_i^\delta}(X):= \Phi_{p_i}(X^{N/p_i^\delta}) = 1+X^{N/p_i^{\delta}}+ X^{2N/p_i^{\delta}} + \dots + X^{(p_i-1)  N/p_i^{\delta}}
\end{equation}
\end{definition}

This is the same notation as in (\ref{sf-def}), but here we are using it for a different purpose. Specifically, we will use polynomials of the form (\ref{sf-def2}) as building blocks for multiscale fibers and fiber chains below.
While $\Psi_{N/p_i^\delta}$ depends on both ${N/p_i^\delta}$ and $p_i$, both numbers will always be clear from the context. We will also use that
\begin{equation}\label{sf-def-divisors}
\Psi_{N/p_i^\delta}(X)= \frac{ X^{N/p_i^{\delta-1}}-1}{X^{N/p_i^{\delta}}-1}
=\prod_{s|M:\ s\neq 1, \ p_i^{\nu-\delta+1}\parallel s} \Phi_s(X),
\end{equation}
where $\nu\geq 1$ is the exponent such that $p_i^\nu\parallel N$.

\begin{definition}\label{fibers}
{\bf (Fibers and fiber chains)} Let $N|M$, and assume that $p_i| N$.

\smallskip
(i) 
A set $F_0\subset \ZZ_M$ is {\em an $N$-fiber in the $p_i$ direction} if $F_0$ mod $N$ has the mask polynomial
\begin{equation}\label{fib-e01}
F_0(X)\equiv  c X^a \Psi_{N/p_i}(X)    \mod X^N-1,
\end{equation}
with fixed $c\in\NN$ and $a\in\ZZ_N$. Equivalently, $F_0$ mod $N$ is a multiset in $\ZZ_N$ with weights
$$
w_{F_0}^N(x) = \begin{cases}
c & \hbox{ if }x\in \{ a, a+N/p_i, a+2N/p_i,\dots,a+(p_i-1) N/p_i\} , \\
0 & \hbox{ otherwise.}\\
\end{cases}
$$
If a fiber $F_0$ has the form (\ref{fib-e01}), we will refer to $c$ as its {\em multiplicity}, and will say that the fiber 
is {\em rooted at} $a$ or {\em passes through} $a$.

\smallskip
(ii) A set $A\subset \ZZ_M$ is {\em $N$-fibered in the $p_i$ direction} if it can be written as a union of disjoint 
$N$-fibers in the $p_i$ direction, all with the same multiplicity.

\smallskip
(iii) Let $\mathcal{P}\subset\{1,2, \dots,n_i\}$ be non-empty, where $p_i^{n_i}\parallel M$.
A set $F\subset \ZZ_M$ is {\em a $\mathcal{P}$-fiber chain in the $p_i$ direction} if 
$|F|= p_i^{|\mathcal{P}|}$ and $F$ is $N$-fibered in the $p_i$ direction for each $N=M/p_i^{\alpha-1}$, where
$\alpha\in\mathcal{P}$. We will also use the convention that if $\calp=\emptyset$, then a $\mathcal{P}$-fiber chain in any direction is any singleton set $\{x\}$ with $x\in\ZZ_M$.

\smallskip
(iv) A set $A\subset \ZZ_M$ is {\em $\mathcal{P}$-fibered in the $p_i$ direction} if it can be written as a union of disjoint 
$\mathcal{P}$-fiber chains in the $p_i$ direction.

\end{definition}

We list a few examples. Observe that, while we will not use $\calp$-fiber chains with multiplicities greater that 1, Definition \ref{fibers} (iii) does require the concept of $N$-fibers with multiplicity.

\begin{itemize}
\item A $\{1\}$-fiber chain in the $p_i$ direction  is simply an $M$-fiber in that direction, and an $\{1\}$-fibered set in the $p_i$ direction with multiplicity 1 is $M$-fibered in that direction, as defined in Section \ref{gridsetal}. 

\item A $\{2\}$-fiber chain in the $p_i$ direction is a set $F\subset\ZZ_M$ such that for some $a\in \ZZ_M$ we have
$F(x)\equiv X^a(1+X^{M/p_i^2}+X^{2M/p_i^2}+ \dots +X^{(p_i-1)M/p_i^2})$ mod $(X^{M/p_i}-1)$.
Note that $|F|=p_i$.

\item A $\{1,2\}$-fiber chain in the $p_i$ direction is a set $F\subset\ZZ_M$ such that for some $a\in \ZZ_M$ we have
$F(x)\equiv X^a(1+X^{M/p_i^2}+X^{2M/p_i^2}+ \dots +X^{(p_i^2-1)M/p_i^2})$ mod $(X^{M}-1)$.
Note that $|F|=p_i^2$, $F$ is $M$-fibered in the $p_i$ direction with multiplicity 1, and $M/p_i$-fibered in the $p_i$ direction with multiplicity $p_i$.
\end{itemize}

\begin{lemma}\label{fibered-properties}
{\bf (Properties of fibered sets)} 
Assume that $A\subset \ZZ_M$ is $\mathcal{P}$-fibered in the $p_i$ direction for some $\mathcal{P}\subset\{1,2,\dots,n_i\}$.
Then the following hold.

\smallskip

(i) We have 
\begin{equation}\label{fiffib-a}
\prod_{\alpha\in \mathcal{P}} \Psi_{M/p_i^{\alpha}}(X)|A(X).
\end{equation}
In particular, $\Phi_s(X)|A(X)$ for all $s|M$, $s\neq 1$, such that $s=p_i^{n_i-\alpha+1}s'$, where $\alpha\in \mathcal{P}$ and $(s',p_i)=1$.

\smallskip

(ii) We have $p_i^{|\mathcal{P}|} \mid |A|$. In particular, a $\mathcal{P}$-fiber chain $F$ as in Definition \ref{fibers} (iii)
is a minimal set that is $\mathcal{P}$-fibered in the $p_i$ direction.

\smallskip
(iii) $\{M/p_i^{\alpha}:\ \alpha \in \mathcal{P}\}\subset\Div(A)$.

\smallskip

(iv)  Let $F\subset \ZZ_M$ be a $\mathcal{P}$-fiber chain with multiplicity 1 in the $p_i$ direction. Translating $F$ if necessary,
we may assume that $0\in F$. Let $\gamma =\max \mathcal{P}$.
Then $F\subset (M/p_i^\gamma)\ZZ_M\subset \ell_i(0)$, and 
$F$ tiles $(M/p_i^\gamma)\ZZ_M\simeq \ZZ_{p_i^\gamma}$ with the standard tiling complement $G$, where
$$G(X)=\prod_{\tau: \ 1\leq\tau<\gamma,\  \tau\notin\mathcal{P}} \Psi_{M/p_i^{\tau}}(X).$$
(We use the convention that an empty product is equal to 1.)

\end{lemma}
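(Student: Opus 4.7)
The plan is to reduce each of the four parts to a statement about a single $\mathcal{P}$-fiber chain $F$; since $A$ is a disjoint union of such chains, the corresponding property for $A$ follows.

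For (i), for every $\alpha\in \mathcal{P}$ the defining condition of $N_\alpha$-fibering says that the mask polynomial of $F$ modulo $X^{N_\alpha}-1$ is divisible by $\Psi_{M/p_i^\alpha}(X)$, where $N_\alpha = M/p_i^{\alpha-1}$. Since $\Psi_{M/p_i^\alpha}$ divides $X^{N_\alpha}-1$ by (\ref{sf-def-divisors}), this upgrades to divisibility $\Psi_{M/p_i^\alpha}(X)\mid F(X)$ in $\ZZ[X]$. For distinct $\alpha\in \mathcal{P}$ the polynomials $\Psi_{M/p_i^\alpha}$ have disjoint sets of cyclotomic factors by Lemma \ref{cyclo-divisors} -- each collects precisely those cyclotomics $\Phi_s$ whose $p_i$-part is $p_i^{n_i-\alpha+1}$ -- so they are pairwise coprime, and their product divides $F(X)$. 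The explicit list of cyclotomic divisors in (i) is then another application of Lemma \ref{cyclo-divisors}.

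Parts (ii) and (iii) are essentially bookkeeping. For (ii), Definition \ref{fibers}(iii) already gives $|F| = p_i^{|\mathcal{P}|}$, so $p_i^{|\mathcal{P}|}\mid |A|$ and the minimality of a single chain is immediate. For (iii), fix $\alpha\in \mathcal{P}$: the chain $F$ contains an $N_\alpha$-fiber, which in turn contains two elements $\tilde a,\tilde a'$ whose reductions modulo $N_\alpha$ differ by $M/p_i^\alpha$. A short calculation with array coordinates (Lemma \ref{coord-properties}(ii)) shows $(\tilde a-\tilde a',M)=M/p_i^\alpha$, giving the required divisor.

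For (iv), set $\gamma=\max\mathcal{P}$ and assume $0\in F$. The key geometric step is to show $F \subset (M/p_i^\gamma)\ZZ_M$, by induction on $|\mathcal{P}|$. In the base case $\mathcal{P}=\{\alpha\}$, $F$ is a single $N_\alpha$-fiber through $0$, whose root in $\ZZ_{N_\alpha}$ must be a multiple of $M/p_i^\alpha$, and lifts to $\ZZ_M$ remain multiples of $M/p_i^\alpha=M/p_i^\gamma$. For the inductive step, the $N_\gamma$-fiber decomposition exhibits $F$ as a disjoint union of fibers, and the additional $N_\alpha$-fiberings for $\alpha\in \mathcal{P}$ with $\alpha<\gamma$ are used to force every fiber root to be $\equiv 0$ modulo $M/p_i^\gamma$. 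Once this containment is in hand, $G\subset (M/p_i^\gamma)\ZZ_M$ follows from its definition, $|F||G|=p_i^\gamma=|(M/p_i^\gamma)\ZZ_M|$, and it remains to verify $F(X)G(X)\equiv \prod_{\tau=1}^\gamma \Psi_{M/p_i^\tau}(X) \mod (X^M-1)$. Changing variables to $Y=X^{M/p_i^\gamma}$ makes both $F$ and $G$ into subsets of the prime-power cyclic group $\ZZ_{p_i^\gamma}$, so the tiling is determined by matching cyclotomic divisors $\Phi_{p_i^\beta}$, $1\le\beta\le\gamma$, in $Y$ (Newman's theorem). Using (i) together with Lemma \ref{cyclo-divisors}, $F$ contributes exactly the divisors $\Phi_{p_i^{\gamma-\alpha+1}}$ for $\alpha\in \mathcal{P}$, while $G$ supplies the complementary ones by construction. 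The hard part will be the inductive step establishing $F\subset(M/p_i^\gamma)\ZZ_M$: the fibering conditions at all scales in $\mathcal{P}$ have to be combined simultaneously to pin down every fiber root, not just the one through $0$.
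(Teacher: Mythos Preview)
The paper itself gives no proof of this lemma: it simply states ``The following properties are clear from the definition.'' So there is nothing to compare against beyond the definitions. Your arguments for (i)--(iii) are correct and supply the details the paper omits; the coprimality of the $\Psi_{M/p_i^\alpha}$ via Lemma \ref{cyclo-divisors} is exactly the right way to upgrade the individual divisibilities to the product, and the gcd computation in (iii) is clean.

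For (iv), you are right that the containment $F\subset (M/p_i^\gamma)\ZZ_M$ is the crux, and you are honest that your inductive step is not worked out. As written, the sketch has a genuine gap: decomposing $F$ into $N_\gamma$-fibers does not by itself produce smaller $\mathcal P'$-fiber chains on which to invoke the inductive hypothesis, and saying that ``the additional $N_\alpha$-fiberings\dots force every fiber root to be $\equiv 0$'' does not explain the mechanism. A cleaner route that avoids this difficulty: once you have part (i), you know
\[
F(X)\,G(X) \;\text{is divisible by}\; \prod_{\alpha=1}^{\gamma}\Psi_{M/p_i^\alpha}(X) \;=\; \frac{X^M-1}{X^{M/p_i^\gamma}-1},
\]
since the $\Psi$-factors for $\alpha\in\mathcal P$ come from $F$ and the rest from $G$. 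Hence $F(X)G(X)=Q(X)\cdot\sum_{k=0}^{p_i^\gamma-1}X^{kM/p_i^\gamma}$ with $Q(1)=1$. Because $F$ and $G$ are genuine sets and $|F|\,|G|=p_i^\gamma$, the sumset $F*G$ has total mass $p_i^\gamma$ and nonnegative integer weights summing to $p_i^\gamma$ over $p_i^\gamma$ residue classes; matching this against the right-hand side forces $F*G$ to hit each class exactly once, hence $F\oplus G=(M/p_i^\gamma)\ZZ_M$ and in particular $F\subset (M/p_i^\gamma)\ZZ_M$. This circumvents the induction entirely and is closer in spirit to ``clear from the definition.''
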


\begin{proof}
Part (i) follows directly from the definition, and (ii) follows from (i) since $\Phi_{M/p_i^\alpha}(1)=p_i$. For (iii), let $\alpha\in\calp$, and let $a,a'\in A$ be elements that belong to the same $M/p_i^{\alpha-1}$-fiber in the $p_i$ direction, but not to the same $M/p_i^\beta$-fiber in the $p_i$ direction for any $\beta<\alpha-1$. Then $(a-a',M)=M/p_i^\alpha$, as claimed. 

We now prove (iv). Assume that $0\in F$. Since $\Psi_{M/p_i^\alpha}(X)=\Phi_{p_i^{\gamma-\alpha+1}}(X^{M/p_i^\gamma})$ for $\alpha\leq\gamma$, 
by (i) we have $F(X)\equiv Q(X)\Psi(X^{M/p_i^\gamma})$ mod $(X^M-1)$, where
$$\Psi(X)=\prod_{\alpha\in \mathcal{P}} \Phi_{p_i^{\gamma-\alpha+1}}(X).
$$
Since $\Psi(1)=\prod_{\alpha\in \mathcal{P}}p_i = |F|$, we have $Q(1)=1$. Splitting up the weighted multiset corresponding to $Q(X)$ into residue classes mod $M/p_i^\gamma$, and using that $F$ is a set, we see that 
$Q\in\calm((M/p_i^\gamma)\ZZ_M)$. Hence $F\subset (M/p_i^\gamma)\ZZ_M$.

Let $F'=\{\frac{x}{M/p_i^\gamma}:\ x\in F\}$, so that  
$F' \subset \ZZ_{p_i^\gamma}$ and $\Psi(X)\mid F'(X),$
with $\Psi(1)= |p_i|^{|\calp|} = |F'|$. This is the (T1) tiling condition for $F'$. Hence $F'$ tiles $\ZZ_{p_i^\gamma}$ with the standard tiling complement (see Remark \ref{1dim-standard}). Rescaling back to $F\subset\ZZ_M$, we get (iv).
\end{proof}


\subsection{Cofibers and cofibered structures}

Given a tiling $A\oplus B=\ZZ_M$, we will be interested in the occurrences of ``complementary" fiber chains in $A$ and $B$, in the following sense. 

\begin{definition}[\bf Cofibers]\label{single-cofibers} 
Let $A, B\subset \ZZ_M$, and fix $1\leq\gamma \leq n_i$. 
Let $\mathcal{P}_A, \mathcal{P}_B$ be two disjoint sets
such that 
\begin{equation}\label{cofib-e10}
 \mathcal{P}_A\cup\mathcal{P}_B=\{1,2,\ldots,\gamma\}.
\end{equation}
We say that $F\subset A,G\subset B$ are {\em $(\mathcal{P}_A, \mathcal{P}_B)$-cofibers} in the $p_i$ direction
if:

\begin{itemize}

\item $F$ is a $\mathcal{P}_A$-fiber chain in the $p_i$ direction,

\item $G$ is a $\mathcal{P}_B$-fiber chain in the $p_i$ direction.
\end{itemize}
We will also refer to $(F,G)$ as a {\em $(\mathcal{P}_A, \mathcal{P}_B)$-cofiber pair.}
\end{definition}

Note that if $\gamma=1$, then one of the sets $\calp_A$ and $\calp_B$ must be empty. If $\gamma=1$ and $\calp_A=\emptyset$, then $F$ is a singleton and $G$ is an $M$-fiber in the $p_i$ direction.

Our goal will be to find global cofibered structures as described below. If $A\oplus B=\ZZ_M$ is a tiling pair, having a cofibered structure will often allow us to reduce proving (T2) for $(A,B)$ to proving it to an equivalent but simpler tiling pair. In order to allow for intermediate steps involving sets that are only partially fibered, we state the definition below for arbitrary sets $A,B\subset\ZZ_M$.

\begin{definition}[\bf Cofibered structure and cofibered sets]\label{cofibers} 
Let $A, B\subset \ZZ_M$, and fix $1\leq\gamma \leq n_i$. 
Let $\mathcal{P}_A, \mathcal{P}_B$ be two disjoint sets obeying (\ref{cofib-e10}).

(i) We say that the pair $(A,B)$ has a {\em $(\mathcal{P}_A, \mathcal{P}_B)$-cofibered structure} in the $p_i$ direction if:

\begin{itemize}
\item $B$ is  $\mathcal{P}_B$-fibered in the $p_i$ direction,

\item $A$ contains at least one ``complementary" $\mathcal{P}_A$-fiber chain $F\subset A$
 in the $p_i$ direction, which we will call a  {\em cofiber} for this structure. We will say that $F$ is rooted at $a\in A$
if $a\in F$.
\end{itemize}

(ii) 
We say that the pair $(A,B)$ is {\em $(\mathcal{P}_A, \mathcal{P}_B)$-cofibered} in the $p_i$ direction if:

\begin{itemize}
\item $A$ is  $\mathcal{P}_A$-fibered in the $p_i$ direction,

\item $B$ is  $\mathcal{P}_B$-fibered in the $p_i$ direction.
\end{itemize}

\end{definition}

We emphasize that part (i) of the definition is {\em not} symmetric with respect to $A$ and $B$. Our convention is that the
{\em second} set in the pair must be fibered in its entirety.
While a cofibered structure may have more than one cofiber in $A$, we do not require that
the entire pair $(A,B)$ be cofibered. We will refer to the number $\gamma$ in Definitions \ref{single-cofibers} and \ref{cofibers} as the {\em depth} of, respectively, the cofiber pair or the cofibered structure.

If $A$ and $B$ satisfy the condition of Definition \ref{cofibers} (i), then
by Lemma \ref{fibered-properties},
\begin{equation}\label{cofib-e12}
\{M/p_i^{\alpha}:\ \alpha \in \mathcal{P}_A\}\subseteq\Div(A),\ \ 
\{M/p_i^{\beta}:\ \beta \in \mathcal{P}_B\}\subseteq  \Div(B),
\end{equation}
\begin{equation}\label{cofiffib-b}
\prod_{\beta\in\mathcal{P}_B} \Psi_{M/p_i^{\beta}}(X)|B(X),
\end{equation}
and if a cofiber $F$ is rooted at some $a\in A$, then
\begin{equation}\label{cofiffib-a}
X^a\prod_{\alpha\in \mathcal{P}_A} \Psi_{M/p_i^{\alpha}}(X) \Big| F(X).
\end{equation}

\begin{remark}\label{fibered-critters} 
Assume that $A, B\subset \ZZ_M$ satisfy $\Div(A)\cap\Div(B)=\{M\}$, and fix $1\leq\gamma \leq n_i$. 
Let $\mathcal{P}_A, \mathcal{P}_B$ be two disjoint sets obeying (\ref{cofib-e10}). 
Assume that
\begin{equation}\label{fibered-critters-eq} 
\{M/p_i^{\alpha}:\ \alpha \in \mathcal{P}_A\}\cap\Div(B)=\emptyset.
\end{equation}
(In particular, if $A\oplus B=\ZZ_M$ and $A$ contains a $\calp_A$-fiber chain in the $p_i$ direction, then (\ref{fibered-critters-eq}) holds by
Lemma \ref{fibered-properties} (iii) and divisor exclusion.) 
Then, in order to prove that $B$ is $\mathcal{P}_B$-fibered in the $p_i$ direction, it suffices to verify 
that every $b\in B$ belongs to a $\mathcal{P}_B$-fiber chain $F(b)$ in the $p_i$ direction. 
Indeed, by Lemma \ref{fibered-properties} (iv),
every $F(b)$ is a maximal subset of $b*(M/p_i^\gamma)\ZZ_M$ such that $\Div(F(b))\cap \{M/p_i^{\alpha}:\ \alpha \in \mathcal{P}_A\}=\emptyset$. Hence, under the above assumptions, any two fiber chains $F(b)$ and $F(b')$ with $b,b'\in B$ must be either identical or disjoint.
\end{remark}

\subsection{Fiber shifting}

Cofibered structures are important for two reasons. On one hand, they arise naturally from 1-dimensional
saturating spaces (see Lemma \ref{1dim_sat} below). On the other hand, with a cofibered structure in place, 
Lemma \ref{fibershift} allows us to shift the cofibers in $A$ as indicated while maintaining both
the tiling property and the (T2) status of $A$. Applying such shifts repeatedly, we are able to reduce many cases
to simpler tilings where (T2) is easy to verify.

\begin{lemma}[\bf Fiber-Shifting Lemma]\label{fibershift} 
Let $A\oplus B=\ZZ_M$. Assume that the pair $(A,B)$ has a $(\mathcal{P}_A, \mathcal{P}_B)$-cofibered structure,
with a cofiber $F\subset A$. Let $A'$ be the set obtained from $A$ by shifting $F$ by 
$M/p_i^{\beta}$ for any $\beta\in\mathcal{P}_B$. 
Then $A'\oplus B=\ZZ_M$, and $A$ is T2-equivalent to $A'$.
\end{lemma}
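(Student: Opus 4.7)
The plan is to write $A'(X) = A(X) + (X^s - 1)F(X)$ with $s = M/p_i^{\beta}$, and to establish the two claims separately: $A'\oplus B=\ZZ_M$ as a tiling, and T2-equivalence.

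For the tiling step, I would first show $A'(X)B(X)\equiv A(X)B(X) \pmod{X^M-1}$, which amounts to $(X^s-1)F(X)B(X)\equiv 0\pmod{X^M-1}$. This follows from a telescoping computation: writing $B(X)=B_0(X)\prod_{\beta'\in\mathcal{P}_B}\Psi_{M/p_i^{\beta'}}(X)$ where $B_0$ indexes the roots of the $\mathcal{P}_B$-fiber chains in $B$, and $F(X)=X^a\prod_{\alpha\in\mathcal{P}_A}\Psi_{M/p_i^{\alpha}}(X)$, we get $F(X)B(X)=X^aB_0(X)\prod_{k=1}^{\gamma}\Psi_{M/p_i^k}(X)=X^aB_0(X)\cdot\frac{X^M-1}{X^{M/p_i^\gamma}-1}$. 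Since $\beta\leq\gamma$ gives $X^{M/p_i^\gamma}-1 \mid X^s-1$, multiplying by $X^s-1$ yields a multiple of $X^M-1$. Next, to confirm that $A'$ is an honest $0/1$ polynomial, I need $F+s$ to be disjoint from $A$. The key observation here is that $\beta\in\mathcal{P}_B$ makes $B$ contain $M/p_i^{\beta-1}$-fibers in the $p_i$ direction, whence $s\in\Div(B)$; by Sands's divisor exclusion (Theorem \ref{thm-sands}) $s\notin\Div(A)$, so there is no $a\in A$, $f\in F$ with $a-f=s$. Hence $A' = (A\setminus F)\cup(F+s)$ is a genuine subset of $\ZZ_M$ of size $|A|$, and the polynomial identity upgrades to the tiling $A'\oplus B=\ZZ_M$.

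For T2-equivalence, note first that $A$ and $A'$ tile with the same $B$, so $S_A=S_{A'}$ and (T2) for $A$ and for $A'$ impose divisibility by the same family of products $\Phi_d$, $d=s_1\cdots s_k$ with $s_j\in S_A$ prime powers of distinct primes. Since $A'(X)-A(X)=(X^s-1)F(X)$, the equivalence $\Phi_d\mid A \Leftrightarrow \Phi_d\mid A'$ reduces to the claim $\Phi_d\mid (X^s-1)F(X)$ for every such $d$. I would prove this by a case split on $\alpha'$, the $p_i$-exponent of $d$. If $\alpha'\leq n_i-\beta$, then $d\mid M/p_i^\beta$, so $\Phi_d\mid X^s-1$. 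If $\alpha'\geq n_i-\beta+1$, then $p_i^{\alpha'}$ must appear among the $s_j$, so $\Phi_{p_i^{\alpha'}}\mid A$, and since $A\oplus B$ is a tiling, $\Phi_{p_i^{\alpha'}}\nmid B$. Setting $\alpha:=n_i-\alpha'+1\in\{1,\dots,\beta\}\subseteq\{1,\dots,\gamma\}$, the partition $\mathcal{P}_A\sqcup\mathcal{P}_B=\{1,\dots,\gamma\}$ places $\alpha$ in one of the two pieces. It cannot land in $\mathcal{P}_B$, because $\alpha\in\mathcal{P}_B$ would give $\Psi_{M/p_i^\alpha}\mid B$ and hence $\Phi_{p_i^{\alpha'}}\mid B$ via the identity $\Psi_{M/p_i^\alpha}=\prod_{t:\,p_i^{n_i-\alpha+1}\parallel t}\Phi_t$, contradicting the previous sentence. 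So $\alpha\in\mathcal{P}_A$, and the same identity yields $\Phi_d\mid F(X)$.

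The main obstacle I anticipate is the T2-equivalence argument, specifically the bookkeeping that matches the $p_i$-exponent $\alpha'$ of $d$ to the correct fiber level $\alpha=n_i-\alpha'+1$, and the double use of tiling (to exclude $\Phi_{p_i^{\alpha'}}\mid B$) and of the fibered structure of $B$ (which would reinstate this divisibility if $\alpha$ were in $\mathcal{P}_B$). The tiling step itself becomes clean once the telescoping product $\prod_{k=1}^\gamma\Psi_{M/p_i^k}$ is identified, and the set-ness of $A'$ is a one-line consequence of Sands's theorem; the substantive new observation is that combining these two pieces of information about $B$ forces the shifted fiber to absorb exactly the missing cyclotomic factor.
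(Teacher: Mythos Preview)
Your proof is correct and follows the same approach as the paper: write $A'(X)=A(X)+(X^s-1)F(X)$ and split on the $p_i$-exponent, using $\Phi_d\mid X^s-1$ in the low range and $\Phi_d\mid F$ (forced via $\mathcal{P}_A$ versus $\mathcal{P}_B$) in the high range. Two minor remarks: a general $\mathcal{P}_A$-fiber chain $F$ only satisfies $\prod_{\alpha\in\mathcal{P}_A}\Psi_{M/p_i^\alpha}\mid F$ (Lemma~\ref{fibered-properties}(i)), not equality with $X^a\prod\Psi$, and similarly $B$ is only guaranteed to be \emph{divisible} by $\prod_{\beta'\in\mathcal{P}_B}\Psi_{M/p_i^{\beta'}}$ --- but your telescoping argument uses only divisibility, so this is harmless; and your explicit verification via divisor exclusion that $A'$ is a genuine $0$--$1$ set is a point the paper leaves implicit.
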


\begin{proof} 
We have
$$
A'(X)=A(X)+(X^{kM/p_i^{\beta}}-1)F(X)
$$
for some $k$ with $(k,p_i)=1$.

We must prove that $\Phi_s(X)|A'(X)B(X)$ for all $s|M, s\neq 1$. Fix such $s$, and write it as $s=p_i^{n_i-\gamma}s'$, where $(s,p_i)=1$. 
Consider three cases.
\begin{itemize}

\item If $\gamma\geq  \beta$, then $\Phi_s(X)\mid (X^{kM/p_i^{\beta}}-1)$, therefore it divides $A$ if and only if it divides $A'$.

\item If $\gamma < \beta$ and $\gamma\in \mathcal{P}_B$, then $\Phi_s(X)\mid \Psi_{M/p_i^{\gamma}}(X) \mid B(X)$.

\item If $\gamma < \beta$ and $\gamma\in \mathcal{P}_A$, then $\Phi_s(X)\mid \Psi_{M/p_i^{\gamma}}(X) \mid F(X)$,
therefore $\Phi_s$ divides $A$ if and only if it divides $A'$.
\end{itemize}
This implies the first part of the lemma.

Suppose furthermore that $\Phi_s$ is a (T2) cyclotomic polynomial of $A$, in the sense that $s=s_1\dots s_\tau$, where $s_1,\dots,s_\tau$ are powers of distinct primes such that $\Phi_{s_1}\dots\Phi_{s_\tau}|A$. In particular, we must have
$\Phi_{p_i^{n_i-\gamma}}\mid A$, and therefore $\Phi_{p_i^{n_i-\gamma}}\nmid B$. By the above analysis applied to $p_i^{n_i-\gamma}$ instead of $s$, we must have 
either $\gamma \geq \beta$ or $\gamma\in \mathcal{P}_A$. In both cases, we get that $\Phi_s$ divides $A$ if and only if it divides $A'$, so that the (T2) property is preserved when we pass from $A$ to $A'$.
\end{proof}

\subsection{Fibers and 1-dimensional saturating spaces} We now prove that 1-dimensional saturating sets imply cofibered structures.

\begin{lemma}\label{onedivisor}
Assume that $A\oplus B=\ZZ_M$, and let $x,y\in\ZZ_M$.

\medskip

(i) Let $1\leq \alpha,\alpha'\leq n_i$ with $\alpha\neq \alpha'$. Then
$$
\bbA_{M/p_i^{\alpha}} [x] \,\bbB_{M/p_i^{\alpha}} [y] \, \bbA_{M/p_i^{\alpha'}} [x]
 \, \bbB_{M/p_i^{\alpha'}}  [y] =0.
$$
In particular, if $A_{x,y}\subset \ell_i(x)$, then the product $\langle \bbA[x],\bbB[y]\rangle$ is saturated by a single divisor.

\medskip

(ii) Suppose that $A_x\subset \ell_i(x)$. Then there exists an $\alpha$ with $0\leq\alpha\leq n_i$
such that 
$$\bbA_{M/p_i^{\alpha}} [x] \,\bbB_{M/p_i^{\alpha}} [b] = \phi(p_i^\alpha)\hbox{ for all }b\in B.$$

\end{lemma}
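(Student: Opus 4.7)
For (i), my plan is to apply Lemma \ref{triangles} directly with $m = M/p_i^\alpha$ and $m' = M/p_i^{\alpha'}$. Writing these divisors in the form $\prod_j p_j^{\gamma_j}$, we have $\gamma_j = n_j$ for $j \neq i$, while $\gamma_i = n_i - \alpha$ and $\gamma_i' = n_i - \alpha'$ respectively. Since $\alpha, \alpha' \geq 1$, both $m$ and $m'$ differ from $M$; for $j \neq i$ the exponents coincide at $n_j$ (second option in the hypothesis of Lemma \ref{triangles}); and for $j = i$ they differ because $\alpha \neq \alpha'$ (first option). Hence the hypotheses of Lemma \ref{triangles} are met, and the vanishing follows. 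For the ``in particular'' clause, any $m$ contributing a nonzero term to $\langle \bbA^M[x], \bbB^M[y]\rangle$ arises from some $a \in A_{x,y} \subset \ell_i(x)$, so $m = M/p_i^\alpha$ for some $0 \leq \alpha \leq n_i$. Part (i) kills all but one such $\alpha \geq 1$, and combined with the $\alpha = 0$ contribution (whose value is $\frac{1}{\phi(1)} \bbA^M_M[x]\bbB^M_M[y] \in \{0,1\}$) together with the identity $\langle \bbA^M[x], \bbB^M[y]\rangle = 1$ from Theorem \ref{ortho-lemma}, a single divisor saturates the product.

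For (ii), I would split into two cases. If $x \in A$, then $\bbA^M_M[x] = 1$ and $\bbB^M_M[b] = 1$ for every $b \in B$, so the $m = M$ term alone contributes the full value $1$ of $\langle \bbA^M[x], \bbB^M[b]\rangle$, and the conclusion holds with $\alpha = 0$. If $x \notin A$, the $m = M$ term vanishes identically, so by part (i) applied with $y = b$ there is a unique $\alpha_b \in \{1, \dots, n_i\}$ with $\bbA^M_{M/p_i^{\alpha_b}}[x]\,\bbB^M_{M/p_i^{\alpha_b}}[b] \neq 0$, and the identity $\langle \bbA^M[x], \bbB^M[b]\rangle = 1$ forces this product to equal $\phi(p_i^{\alpha_b})$.

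The crux is to show $\alpha_b$ is independent of $b$. Suppose towards contradiction that $\alpha_{b_1} < \alpha_{b_2}$ for two choices $b_1, b_2 \in B$, and pick $a_j \in A$ with $(x - a_j, M) = M/p_i^{\alpha_{b_j}}$ for $j = 1, 2$. Since $M/p_i^{\alpha_{b_j}} \in \Div(B)$ (witnessed by $b_j$ and its partner in $B$), both $a_j$ lie in $A_x \subset \ell_i(x)$, so they agree with $x$ on every array coordinate except the $i$-th, where their offsets have distinct $p_i$-valuations $n_i - \alpha_{b_1} > n_i - \alpha_{b_2}$. A direct computation using Lemma \ref{coord-properties} then yields $(a_1 - a_2, M) = M/p_i^{\alpha_{b_2}} \neq M$, so this divisor lies in both $\Div(A)$ (since $a_1 \neq a_2$) and $\Div(B)$, contradicting divisor exclusion (Theorem \ref{thm-sands}). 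The remaining steps are bookkeeping; this appeal to divisor exclusion on a difference of collinear points is the only delicate point in the argument.
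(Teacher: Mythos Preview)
Your proof is correct and follows the same approach as the paper. Part (i) is exactly the paper's argument (a direct application of Lemma~\ref{triangles}), and your treatment of part (ii) spells out in detail what the paper compresses into a one-line reference: the constancy of $\alpha_b$ across $b\in B$ ultimately comes from divisor exclusion (Theorem~\ref{thm-sands}), which is the same mechanism underlying Lemma~\ref{triangles} and the bispan machinery. One minor remark: in your constancy argument you did not actually need to invoke $a_j\in A_x$ to place $a_j$ on $\ell_i(x)$, since $(x-a_j,M)=M/p_i^{\alpha_{b_j}}$ already forces $M_i\mid(x-a_j)$; the appeal to $A_x$ is harmless but unnecessary there.
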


\begin{proof}
Part (i) is a special case of Lemma \ref{triangles}, and part (ii) follows from (\ref{bispan}).
\end{proof}

\begin{definition}
Let $\mathcal{P}\subset\{1,2,\dots,n_i\}$,
and let $F\subset \ZZ_M$ be a $\mathcal{P}$-fiber chain in the $p_i$ direction.

\smallskip

(i) An element $x\in \ZZ_M$ is at {\em distance} $m$ from $F$ if $m|M$ is the maximal divisor
such that $(z-x,M)=m$ for some $z\in F$. 

\smallskip

(ii) If $1\leq\delta\leq n_i$, we will write $\mathcal{P}[\delta]= \mathcal{P}\cap \{1,2,\dots,\delta\}$.

\end{definition}

If $x\in\ZZ_M$ and $F\subset \ZZ_M$ is a $\mathcal{P}$-fiber chain in the $p_i$ direction,
then for all $z\in F$ we have $(z-x,M)=m'p_i^{\alpha(z)}$, where $m'|(M/p_i^{n_i})$
is the same for all $z\in F$. In particular, the distance from $x$ to $F$ is well defined and is equal to
$m'p_i^{\max_{z\in F}\alpha(z)}$.

\begin{lemma}[\bf The structure of 1-dimensional saturating spaces]\label{1dim_sat}
Assume that $A\oplus B =\ZZ_M$ is a tiling.

\smallskip
(i) Suppose that $ x, y\in\ZZ_M $ satisfy $x\notin A$ and
\begin{equation}\label{fib-e7}
\mathbb{A}^M_{M/p_i^\gamma}[x]\mathbb{B}^M_{M/p_i^\gamma}[y]=\phi(p_i^\gamma)
\end{equation}
for some $ 0<\gamma\leq n_i $. 
Then there exist two disjoint sets 
$\mathcal{P}_A,\mathcal{P}_B $ with
\begin{equation}\label{fib-e10}
\mathcal{P}_A\cup\mathcal{P}_B = \{1,\dots,\gamma-1\},
\end{equation}
\begin{equation}\label{fib-e12}
\{M/p_i^{\alpha}:\ \alpha \in \mathcal{P}_A\}\subseteq\Div(A),\ \ 
\{M/p_i^{\beta}:\ \beta \in \mathcal{P}_B\}\subseteq  \Div(B),
\end{equation}
such that the following holds. Let 
$A_0\subset A_{x,y}$ be a maximal subset such that for all $a,a'\in A_0$ with $a\neq a'$ we have $(a-a',M)=M/p_i^{\gamma}$, and let $B_0$ be a similar subset of $B_{y,x}$.
Then one of the sets $A_0$ and $B_0$ has cardinality 1, the other has cardinality $p_i-1$, and furthermore
\begin{equation}\label{fib-e100}
A_{x,y}=\bigcup_{a\in A_0}F(a),\ \ B_{y,x}= \bigcup_{b\in B_0}G(b),
\end{equation}
where $F(a)$ is a $\mathcal{P}_A$-fiber chain in the $p_i$ direction rooted at $a$, and
$G(b)$ is a $\mathcal{P}_B$-fiber chain in the $p_i$ direction rooted at $b$.

\smallskip
(ii) Suppose that $x\in\ZZ_M\setminus A$ and $A_x\subset\ell_i(x)$, with 
\begin{equation}\label{fib-e7b}
\mathbb{A}^M_{M/p_i^\gamma}[x]\mathbb{B}^M_{M/p_i^\gamma}[b]=\phi(p_i^\gamma)
\hbox{ for all }b\in B,
\end{equation}
where $ 0<\gamma\leq n_i $ (as follows from Lemma \ref{onedivisor} (ii)).
Then the pair $(A,B)$ has a $(\calp_A,\calp_B\cup\{\gamma\})$-cofibered structure, with 
$A_{x}$ as a $\calp_A$-cofiber at a distance $M/p_i^\gamma$ from $x$.
\end{lemma}

\begin{proof}
We first prove (i). Define $A_0$ and $B_0$ as above. Since $A_0\subset x*(M/p_i^{\gamma})\ZZ_M$ and each element of $A_0\cup\{x\}$ is contained in a different residue class mod $M/p_i^{\gamma}$, we have 
$|A_0|\leq p_i-1$, and similarly for $B_0$.
By divisor exclusion, at most one of these sets has cardinality greater than 1.

Next, let $A_1\subset x*(M/p_i^{\gamma})\ZZ_M$ be a maximal subset of $A_{x,y}$ such that
$$
\forall a,a'\in A_1 \hbox{ with } a\neq a', \hbox{ we have }(a-a',M)=M/p_i^{\gamma-1},
$$
and define $B_1$ similarly. Then $|A_1|\leq p_i|A_0|$, since for each $a\in A_1$ there must be a ``parent" $a_0\in A_0$ with 
$M/p_i^{\gamma-1}\mid a-a_0$, and each $a_0$ can have at most $p_i$ such ``children" $a\in A_1$ (we allow $a=a_0$, so that $A_0\subset A_1$). Similarly, $|B_1|\leq p_i|B_0|$. Moreover, if $|A_1|>|A_0|$, then we must have $M/p_i^{\gamma-1}\in\Div(A)$, and similarly for $B$, so that at least one of $|A_1|=|A_0|$ and $|B_1|=|B_0|$ must hold.
If $|A_1|>|A_0|$, we place $\gamma-1$ in $\calp_A$, otherwise we place it in $\calp_B$.

We continue by induction, constructing a sequence of sets 
$$
A_0\subset A_1\subset A_2\subset\dots\subset A_{\gamma-1}=A_{x,y},\ \ 
B_0\subset B_1\subset B_2\subset\dots\subset B_{\gamma-1}=B_{y,x},
$$
and two disjoint sets $\mathcal{P}_A,\mathcal{P}_B $ obeying (\ref{fib-e10}), 
so that for each $l=1,2,\dots, \gamma-1$:
\begin{itemize}
\item if $l\in\calp_A$, then $|A_{\gamma-l+1}|\leq p_i|A_{\gamma-l}|$, $|B_{\gamma-l+1}|=|B_{\gamma-l}|$,
and $M/p_i^{\gamma-l}\not\in\Div(B)$,
\item if $l\in\calp_B$, then the same holds with $A$ and $B$ interchanged.
\end{itemize}
 It follows that 
$$
\mathbb{A}^M_{M/p_i^\gamma}[x]\mathbb{B}^M_{M/p_i^\gamma}[y]
\leq |A_0|\, |B_0|  \, p_i^{|\calp_A|} p_i^{|\calp_B|} 
\leq (p_i-1) p_i^{\gamma -1} = \phi(p_i^\gamma).
$$
Furthermore, for the equality to hold, one of the sets $A_0,B_0$ must have cardinality $p_i-1$, for each $a\in A_0$ the
set $F(a):=\{a\in A_{x,y}: M/p_i^{\gamma-1}\mid a-a_0\}$ 
must be a full $\mathcal{P}_A$-fiber chain in the $p_i$ direction rooted at $a$, and a similar statement must hold for $B$. This yields the structure described in part (i).

For part (ii), assume that (\ref{fib-e7b}) holds, and let $B_0(b)$ be the set from (\ref{fib-e12}) with $y=b$ for each $b\in B$.
Since $M/p_i^\gamma\in\Div(B)$, we must have $|A_0|=1$ and $|B_0(b)|=p_i-1$,
Fix $b\in B$, so that 
$$
B_{b,x}= \bigcup_{b'\in B_0(b)}G(b').
$$
Let $b'\in B_0(b)$, and apply part (i) of the lemma with $y=b'$. Since $b\in B_{b',x}$, there is a $\calp_B$-fiber chain $G(b)\subset B$ rooted at $b$, so that 
$$
B_{b',x}= G(b)\cup \bigcup_{b''\in B_0(b),b''\neq b'}G(b'').
$$
Thus $\bigcup_{b''\in B_0(b)\cup\{b\}}G(b'')$ is a $(\calp_B\cup\{\gamma\})$-fiber chain in $B$, rooted at $b$.
Applying this argument to all $b\in B$, and using Remark \ref{fibered-critters},
we get the cofibered structure as indicated.
\end{proof}

The following special case will be used frequently in \cite{LaLo2}.

\begin{corollary}\label{1dim_sat-cor}
Assume that $A\oplus B =\ZZ_M$ is a tiling.
Suppose that $x\in\ZZ_M\setminus A$, $b\in B$, $M/p_i\in\Div(A)$, and
\begin{equation}\label{fib-e300}
\mathbb{A}^M_{M/p_i^2}[x]\mathbb{B}^M_{M/p_i^2}[b]=\phi(p_i^2).
\end{equation}
Then there exists a $(\{1\},\{2\})$-cofiber pair $(F,G)$ such that $F\subset A$ is at distance $M/p_i^2$ from $x$, $G\subset B$ is rooted at $b$, and 
\begin{equation}\label{fib-e301}
\mathbb{A}^M_{M/p_i^2}[x| F]\mathbb{B}^M_{M/p_i^2}[b | G]=\phi(p_i^2).
\end{equation}
In particular, if $M/p_i\in\Div(A)$ and $A_x\subset \ell_i(x)$ with $M/p_i^2$ as the contributing divisor (cf. Lemma \ref{onedivisor} (ii)),
then the pair
$(A,B)$ has a $(\{1\},\{2\})$-cofibered structure.
\end{corollary}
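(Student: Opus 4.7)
The plan is to apply Lemma~\ref{1dim_sat} with $y=b\in B$ and $\gamma=2$: the hypothesis $\bbA^M_{M/p_i^2}[x]\bbB^M_{M/p_i^2}[b]=\phi(p_i^2)$ is exactly the box-product equation~(\ref{fib-e7}) for those parameters. Divisor exclusion (Theorem~\ref{thm-sands}) applied to $M/p_i\in\Div(A)$ gives $M/p_i\notin\Div(B)$, and the rule in the proof of Lemma~\ref{1dim_sat} for assigning the index $\gamma-1=1$ then places it on the $A$-side, so that $\calp_A=\{1\}$ and $\calp_B=\emptyset$. Under this assignment the lemma's conclusion reads
\[
A_{x,b}=\bigcup_{a\in A_0}(a*F_i),\qquad B_{b,x}=B_0,
\]
i.e.\ $A_{x,b}$ is a disjoint union of $M$-fibers in the $p_i$-direction while $B_{b,x}$ is just a set of $|B_0|$ points; furthermore $|A_0|\cdot|B_0|=p_i-1$ with one of the two cardinalities equal to $1$.

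The next step is to select the case $|A_0|=1$, $|B_0|=p_i-1$. Expanding the tiling identity $\langle\bbA^M[x],\bbB^M[b]\rangle=1$ as a sum over divisors $m\mid M$ and comparing with the hypothesis shows that every term except the one at $m=M/p_i^2$ must vanish, yielding $B_{b,x}=\{b'\in B:(b-b',M)=M/p_i^2\}$ and hence $\bbB^M_{M/p_i^2}[b]=|B_0|$. On the other hand, any $\{2\}$-fiber chain $G\subset B$ rooted at $b$ must contain $p_i-1$ elements of $B$ at pairwise distance $M/p_i^2$ from $b$, so the existence of such a $G$ demands $\bbB^M_{M/p_i^2}[b]\geq p_i-1$; this forces $|B_0|=p_i-1$ (for $p_i\geq 3$; the case $p_i=2$ collapses to the same) and hence $|A_0|=1$. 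I then set $F:=A_{x,b}=a*F_i$ (the unique $M$-fiber) and $G:=B_{b,x}\cup\{b\}$. A direct inspection in the array coordinates of Section~\ref{sec-array} shows that all $p_i$ points of $F$ share the non-$p_i$ coordinates of $x$ and its lower $n_i-2$ $p_i$-digits, differing from $x$ only at digit $n_i-2$, so $F$ is at distance $M/p_i^2$ from $x$ with $\bbA^M_{M/p_i^2}[x|F]=p_i$; while the $p_i$ points of $G$ have common non-$p_i$ coordinates and common lower $n_i-2$ $p_i$-digits, exhausting all $p_i$ residues at digit $n_i-2$ --- precisely the structure of an $M/p_i$-fiber (a $\{2\}$-fiber chain) in the $p_i$-direction rooted at $b$, with $\bbB^M_{M/p_i^2}[b|G]=p_i-1$. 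The product identity $\bbA^M_{M/p_i^2}[x|F]\,\bbB^M_{M/p_i^2}[b|G]=p_i(p_i-1)=\phi(p_i^2)$ is then immediate.

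For the ``in particular'' statement, Lemma~\ref{onedivisor}(ii) together with $A_x\subset\ell_i(x)$ extends the hypothesis to every $b\in B$ (with $M/p_i^2$ as the common contributing divisor), so the main part produces a $\{2\}$-fiber chain $G(b)\subset B$ rooted at each $b\in B$. Any two such chains $G(b_1), G(b_2)$ are either equal or disjoint, since each is determined by its projection to $\ZZ_{M/p_i}$ together with a choice of lift for each base point; so their union partitions $B$, making $B$ globally $\{2\}$-fibered in the $p_i$-direction. Combined with the cofibers $F\subset A$ produced by the main part, this yields the claimed $(\{1\},\{2\})$-cofibered structure of $(A,B)$. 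The step I expect to require the most care is the cardinality dichotomy in the second paragraph: I ruled out $|A_0|=p_i-1$, $|B_0|=1$ as incompatible with the required cofiber $G$, but verifying that this case genuinely cannot arise from the hypotheses alone (rather than merely being incompatible with the stated conclusion) may require either a separate tiling-theoretic argument exploiting $A\oplus B=\ZZ_M$ or a sharper reading of the structural output of Lemma~\ref{1dim_sat}.
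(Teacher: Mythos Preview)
Your overall approach is correct and matches the paper's intent (the paper gives no separate proof, treating the corollary as immediate from Lemma~\ref{1dim_sat}). The gap you flag in the second paragraph is real, but it closes with a one-line divisor-exclusion argument rather than the circular reasoning you attempt.

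Here is the missing step. Since $b\in B$ and $\bbB^M_{M/p_i^2}[b]>0$ (from the hypothesis~(\ref{fib-e300})), there exists $b'\in B$ with $(b-b',M)=M/p_i^2$; hence $M/p_i^2\in\Div(B)$, and by Theorem~\ref{thm-sands} we have $M/p_i^2\notin\Div(A)$. But if $|A_0|\geq 2$, any two distinct elements of $A_0$ are at pairwise distance $M/p_i^2$ by the definition of $A_0$, which would force $M/p_i^2\in\Div(A)$ --- contradiction. Therefore $|A_0|=1$ and $|B_0|=p_i-1$. (For $p_i=2$ the dichotomy is vacuous since $p_i-1=1$, and your construction goes through unchanged.) This is precisely the ``separate tiling-theoretic argument exploiting $A\oplus B=\ZZ_M$'' you anticipated; the key is that here $y=b$ lies in $B$, which is what places $M/p_i^2$ in $\Div(B)$.

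Your ``in particular'' argument can be tightened as well. Once you observe that $G(b)=\{b'\in B:(b-b',M)\in\{M,M/p_i^2\}\}$, the relation $b\sim b'\Leftrightarrow (b-b',M)\in\{M,M/p_i^2\}$ is an equivalence relation on $B$ (transitivity uses $M/p_i\notin\Div(B)$), so the sets $G(b)$ partition $B$ and $B$ is $\{2\}$-fibered. Your ``projection plus choice of lift'' description does not by itself rule out two overlapping but unequal chains.
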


For simplicity, when $M$ is fixed, we will write ``$(1,2)$-cofiber pair" instead of ``$(\{1\},\{2\})$-cofiber pair", and similarly for cofibered structures.


\subsection{Examples and applications}\label{shiftexamples}

Let $M=p_1^{n_1}\dots p_K^{n_K}$ with $K\geq 3$ and $p_1,\dots,p_K\geq 3$. Assume that $A\oplus B=\ZZ_M$ and $|A|=p_1\dots p_K$. Let also $\Lambda$ be a fixed $D(M)$ grid, and assume that $0\in A\cap\Lambda$.

\medskip\noindent
{\bf Example 1.} 
By Lemma \ref{smallcube}, we cannot have $\Lambda\setminus A=\{x\}$ for a single point $x\in\Lambda$. Similarly, we cannot have $A\cap\Lambda=A_0$ if $A_0$ is obtained from $\Lambda$ by deleting a few more points in an ``unstructured" way so that the assumptions of Lemma \ref{smallcube} still apply.

Suppose, however, that $A_0=\Lambda \setminus (x*F_i)$ for some $x\in\Lambda$ and $i\in\{1,\dots,K\}$. Then Lemma \ref{smallcube} is no longer applicable, and indeed, it is possible to have $A\cap\Lambda=A_0$. However, as we now show, this determines the structure of the entire set $A$, and, in particular, both $A$ and $B$ satisfy (T2).

Indeed, we have $\bbA_{M/p_j}[x]\geq 2$ for all $j\neq i$. It follows by (\ref{bispan}) that $A_x\subset \ell_i(x)$. By 
Proposition \ref{1dim_sat}, the pair $(A,B)$ has a $(\mathcal{P}_A,\mathcal{P}_B)$-cofibered structure of depth $\gamma\geq 2$, with $1\in\calp_A$ since $M/p_i\in\Div(A)$. In particular, $A$ must contain an $M$-fiber in the $p_i$ direction at distance $M/p_i^\gamma$ from $x$. By Lemma \ref{fibershift}, we can shift that fiber to $x$, proving that $A$ is T2-equivalent to $\Lambda$. Thus $A^\flat=\Lambda$, and Corollary \ref{get-standard} implies 
(T2) for both $A$ and $B$.

We note that the same argument still applies if $A\cap\Lambda$ has several fibers missing (possibly in different directions). This is the case e.g., in Szab\'o-type examples in \cite{Sz}, \cite{LS}.

\medskip\noindent
{\bf Example 2.} We now consider a more difficult example where saturating sets are not as obvious. 
Let $M=p_i^{2}p_j^{2}p_k^{2}$ with $p_i,p_j,p_k\geq 3$, and assume that $|A|=p_ip_jp_k$ tiles $\ZZ_M$.
Suppose that there exists an element $x\in\Lambda \setminus A$ such that 
\begin{equation}\label{fullplanecond}
\bbA_{M/p_i}[x]=\phi(p_i), \bbA_{M/p_jp_k}[x]=\phi(p_jp_k)
\end{equation}
and $\bbA_m[x]=0$ for all $m\in \{D(M)|m|M\}\setminus \{M/p_i,M/p_jp_k\} $. In the terminology of \cite{LaLo2}, this is a {\em $p_i$-full plane structure}. We prove in \cite[Section 7]{LaLo2} that, for a broader class of tilings including this situation, 
we have $A^\flat=\Lambda$ and the tiling $A\oplus B=\ZZ_M$ is T2-equivalent to $\Lambda\oplus B=\ZZ_M$ via fiber shifts. By Corollary \ref{get-standard}, both $A$ and $B$ satisfy (T2). For expository purposes, we restrict our attention here to this specific structure.

\begin{figure}[h]
\includegraphics[scale=0.7]{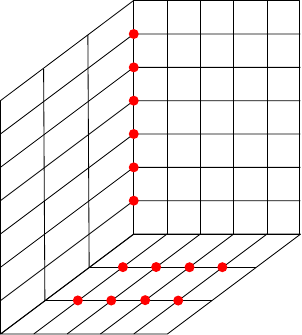}
\caption{A $p_i$-full plane structure on a $D(M)$ grid.}
\end{figure}

Consider the saturating set $A_x$, with $x$ as above. This time, geometric restrictions alone are not sufficient to confine $A_x$ to a single line through $x$. Nonetheless, with an additional argument we have the following lemma.

\begin{lemma}\label{fullplanesplitting}
Under the assumptions of Example 2, we have either $A_x\subset \ell_j(x)$ or $A_x\subset \ell_k(x)$. 
\end{lemma}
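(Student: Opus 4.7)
The plan is to progressively narrow down the saturating set $A_x$ in three stages, the first two using the Bispan machinery cleanly and the third requiring a more delicate combination of Lemma \ref{triangles} with saturation.

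First, I would confine $A_x$ to the two-dimensional slice $\Pi(x, p_i^2)$. Since $\bbA_{M/p_i}[x] = \phi(p_i) \geq 2$, choose two distinct $a_1, a_2 \in A$ with $(x - a_\nu, M) = M/p_i$. By Lemma \ref{span-lemma} and (\ref{bispan}), $A_x \subset \Bispan(x, a_1) \cap \Bispan(x, a_2)$, and each $\Bispan(x, a_\nu) = \Pi(x, p_i^2) \cup \Pi(a_\nu, p_i^2)$ since $\alpha_j = \alpha_k = n$ and only $\alpha_i < n_i$ contributes to the span. Because $p_i^2 \nmid x - a_\nu$ and $p_i^2 \nmid a_1 - a_2$, the three planes $\Pi(x, p_i^2)$, $\Pi(a_1, p_i^2)$, $\Pi(a_2, p_i^2)$ are pairwise disjoint. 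Distributing the union over the intersection then gives $A_x \subset \Pi(x, p_i^2)$.

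Next, using the $(p_j-1)(p_k-1)$ off-axis elements $a \in A \cap \Lambda$ with $(x-a, M) = M/p_j p_k$, a direct computation shows
\begin{equation*}
\Bispan(x, a) \cap \Pi(x, p_i^2) = \ell_j(x) \cup \ell_k(x) \cup \ell_j(a) \cup \ell_k(a).
\end{equation*}
Intersecting over all such $a$ (which realize every off-axis pair of values $(\pi_j(a), \pi_k(a))$ in $\Lambda$) and using $p_j, p_k \geq 3$, a case analysis on the array-digit coordinates of any candidate $z$ outside $\ell_j(x) \cup \ell_k(x)$ shows that $z$ would have to agree with each $a$ in either its $\pi_j$- or $\pi_k$-coordinate, which is impossible since both $\pi_j(a)$ and $\pi_k(a)$ vary over $\geq 2$ distinct off-axis values. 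The off-axis points themselves are excluded from $A_x$ via $M/p_j p_k \in \Div(A)$ and Theorem \ref{thm-sands}. Hence $A_x \subset \ell_j(x) \cup \ell_k(x)$.

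Finally, I would rule out the mixed case. Suppose for contradiction that $a_j \in A_x \cap \ell_j(x) \setminus \{x\}$ and $a_k \in A_x \cap \ell_k(x) \setminus \{x\}$. Since $\bbA_{M/p_j}[x] = \bbA_{M/p_k}[x] = 0$, necessarily $(x - a_j, M) = M/p_j^2$ and $(x - a_k, M) = M/p_k^2$. The pair $(m, m') = (M/p_j^2, M/p_k^2)$ satisfies the hypothesis of Lemma \ref{triangles}, so $\bbB_{M/p_j^2}[y]\,\bbB_{M/p_k^2}[y] = 0$ for every $y \in \ZZ_M$. Combined with the fact that each $A_{x, b}$ (for $b \in B$) is non-empty and must lie entirely in one of $\ell_j(x), \ell_k(x)$ (by the same argument applied at $y = b$), $B$ splits as $B = B_j \sqcup B_k$ with both parts non-empty, and Lemma \ref{onedivisor}(i) produces the saturation identities $n_\nu \bbB_{M/p_\nu^2}[b] = \phi(p_\nu^2)$ for $b \in B_\nu$, where $n_\nu := \bbA_{M/p_\nu^2}[x]$. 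The main obstacle is extracting the final contradiction from this configuration. The route I would pursue combines: (i) an additional Bispan inclusion $A_x \subset \Bispan(x, a_j) = \Pi(x, p_j) \cup \Pi(a_j, p_j)$, which together with the full plane structure and $x \notin A$ forces every element of $A \cap \ell_j(x)$ to share a single low $p_j$-digit, yielding $n_j \leq p_j$ and $M/p_j \in \Div(A)$ (symmetrically for $k$); (ii) the plane bound of Lemma \ref{planebound} with $\alpha_i = 0$, giving $|A \cap \Pi(x, p_i^2)| \leq p_j p_k$ and hence $n_j + n_k \leq p_j + p_k - 1$; and (iii) analysis at the auxiliary point $u = a_j + a_k - x \in \Pi(x, p_i^2)$, which satisfies $(u - x, M) = p_i^2$ with $p_i^2 \in \Div(A) \setminus \Div(B)$ (the latter via divisor exclusion from $(a_j - a_k, M) = p_i^2$). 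Applying Lemma \ref{triangles} at $u$ and tracking how the $(p_j-1)(p_k-1)$ off-axis elements of $A \cap \Lambda$, all at distance $p_i^2$ from $u$, interact with the saturation constraints on $B_j$ and $B_k$ should yield the desired contradiction.
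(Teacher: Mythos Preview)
Your first two stages are correct and in fact spell out what the paper compresses into a one-line citation of (\ref{bispan}); the reduction to $A_x\subset\ell_j(x)\cup\ell_k(x)$ is fine.

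The third stage has a genuine gap. You set things up correctly: for each $b\in B$ the set $A_{x,b}$ lies in a single line, the saturating divisor is $M/p_\nu^2$, and you obtain
\[
n_\nu\,\bbB_{M/p_\nu^2}[b]=\phi(p_\nu^2)=p_\nu(p_\nu-1),\qquad n_\nu:=\bbA_{M/p_\nu^2}[x].
\]
But then you go the wrong way, working to show $n_\nu\le p_\nu$ and leaving the auxiliary-point analysis at $u$ unfinished. What is actually needed (and what the paper does) is the \emph{lower} bound $n_\nu\ge p_\nu$. This falls out of the identity above once you observe that $M/p_\nu\in\Div(A)$ forces $M/p_\nu\notin\Div(B)$, so any two elements contributing to $\bbB_{M/p_\nu^2}[b]$ lie in distinct residue classes mod $M/p_\nu$ along $\ell_\nu(b)$; together with $b$ itself this caps $\bbB_{M/p_\nu^2}[b]\le p_\nu-1$, whence $n_\nu\ge p_\nu$. (Equivalently, invoke Corollary~\ref{1dim_sat-cor}: the $(\{1\},\{2\})$-cofiber in $A$ already puts a full $M$-fiber of size $p_\nu$ at distance $M/p_\nu^2$ from $x$.)

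With $n_j\ge p_j$ and $n_k\ge p_k$ in hand, your own plane-bound inequality $n_j+n_k\le p_j+p_k-1$ is immediately contradicted; equivalently,
\[
|A\cap\Pi(x,p_i^2)|\ \ge\ n_j+n_k+\bbA_{M/p_jp_k}[x]\ \ge\ p_j+p_k+(p_j-1)(p_k-1)=p_jp_k+1,
\]
violating Lemma~\ref{planebound}. No auxiliary point $u$ is needed.
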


\begin{proof}
By (\ref{bispan}), we have
\begin{equation}\label{twotails}
A_x\subset \ell_j(x)\cup\ell_k(x).
\end{equation}
Let $b\in B$. Suppose that $A_{x,b}\cap \ell_j(x)$ is nonempty. Since $M/p_j\in\Div(A)$, we must have 
\begin{equation}\label{jline}
\bbA_{M/p_j^2}[x]\bbB_{M/p_j^2}[b]>0.
\end{equation}
If we also had $A_{x,b}\cap \ell_k(x)\neq\emptyset$ for the same $b$, that would imply that 
\begin{equation}\label{kline}
\bbA_{M/p_k^2}[x]\bbB_{M/p_k^2}[b]>0;
\end{equation}
however, having (\ref{jline}) and (\ref{kline}) at the same time would
contradict Lemma \ref{triangles}. Therefore we must have either $A_{x,b}\subset \ell_j(x)$ or $A_{x,b}\subset\ell_k(x)$. Notice that in the former case we have 
\begin{equation}\label{p_jcofiber}
\bbA_{M/p_j^2}[x]\bbB_{M/p_j^2}[b]=\phi(p_j^2).
\end{equation}
Since $M/p_j\in \Div(A)$, this can only happen if 
\begin{equation}\label{cofiberbreakdown}
\bbA_{M/p_j^2}[x]=p_j \text{ and } \bbB_{M/p_j^2}[b]=\phi(p_j).
\end{equation}
In other words, the pair $(A\cap\ell_j(x), B\cap\ell_j(b))$ contains a $(1,2)$-cofiber pair in the $p_j$ direction. 
If $A_{x,b}\subset\ell_k(x)$, (\ref{p_jcofiber}) and (\ref{cofiberbreakdown}) hold with $j$ replaced by $k$.

We claim that either $A_x\subset \ell_j(x)$ or $A_x\subset\ell_k(x)$. Indeed, assume for contradiction that there exist $b_j,b_k\in B$ such that $A_{x,b_j}\subset \ell_j(x)$ and $A_{x,b}\subset\ell_k(x)$. It follows from (\ref{cofiberbreakdown}) that 
\begin{align*}
|A\cap\Pi(x,p_i^{n_i})|&\geq \bbA_{M/p_j^2}[x]+\bbA_{M/p_k^2}[x]+\bbA_{M/p_jp_k}[x]	\\
&=p_j+p_k+(p_j-1)(p_k-1)\\
&=p_jp_k+1.
\end{align*}
This, however, contradicts Lemma \ref{planebound}. 
\end{proof}

Assume, without loss of generality, that $A_x\subset \ell_j(x)$. By Corollary \ref{1dim_sat-cor}, the pair
 $(A,B)$ has a $(1,2)$-cofibered structure in the $p_j$ direction, with a cofiber in $A$ at distance $M/p_j^2$ from $x$. By Lemma \ref{fibershift}, we may shift the cofiber to $x$. Let $A'$ be the set thus obtained, so that $A'\cap\Lambda$ contains all points of $A\cap\Lambda$ plus, additionally, the fiber $x*F_j\subset A'$. Moreover, $A'$ is T2-equivalent to $A$, and $A'\oplus B=\ZZ_M$. 

In this example, we do not get T2-equivalence to a standard set right away. Instead, the new set $A'$ contains a structure we call a {\em $p_j$-corner} \cite{LaLo2}, consisting of two nonintersecting $M$-fibers in the $p_i$ and $p_k$ directions in $\Lambda$. We then have to work further with that structure to prove that, ultimately, $A'$ (therefore $A$) is T2-equivalent to $\Lambda$.


\section{Conjectures and open questions}\label{conjectures-section}

\subsection{Tiling reductions}

We first consider the question of whether proving properties such as (T2), or, more generally, proving structure and classification results for tilings, could be accomplished by inductive arguments involving reduction to tilings of smaller groups. 

Let $A\oplus B=\ZZ_M$ be a tiling, and assume for convenience that $0\in A\cap B$.
If $M$ has at most two distinct prime factors, then Sands's theorem
\cite{Sands} states that at least one of $A$, $B$ must be contained in $p\ZZ_M$ for some prime $p|M$.  
Thus we can always use Theorem \ref{subgroup-reduction} to decompose such a tiling into tilings of residue classes, with at least one of the sets $A$ and $B$ tiling $p\ZZ_M$. This was the route taken in \cite{CM}.

Suppose now that $M=\prod_{i=1}^K p_i^{n_i}$, where $p_i$ are distinct primes and $K\geq 3$. 
Sands's theorem no longer holds in that setting, with counterexamples given by Szab\'o 
\cite{Sz} (see also \cite{LS}). 
However, it is conceivable that other inductive arguments, not based on Theorem \ref{subgroup-reduction}, may still apply. For example, the following question is open.

\begin{question}\label{q-slab}
Let $A\oplus B=\ZZ_M$ with $M=\prod_{i=1}^K p_i^{n_i}$. 

\smallskip
(i) {\em (Strong version)} Suppose that $\Phi_{p_i^{n_i}}|A$ for some $i\in\{1,2,\dots,K\}$. Is it always true that, in the notation of Theorem \ref{subtile}, 
we have $A'_{p_i}\oplus B=\ZZ_{M/p_i}$ for every translate $A'$ of $A$?

\smallskip
(ii) {\em (Weak version)} Must there always exist some $i\in\{1,2,\dots,K\}$ such that either $A'_{p_i}\oplus B=\ZZ_{M/p_i}$ for every translate $A'$ of $A$, or $A\oplus B'_{p_i}=\ZZ_{M/p_i}$ for every translate $B'$ of $B$?
\end{question}

We do not know of any counterexamples to this. Szab\'o's examples \cite{Sz} satisfy the conditions of Theorem \ref{subtile}, as do all tilings of period $M=p_1^2p_2^2p_3^2$, where $p_1,p_2,p_3$ are all odd \cite{LaLo2}. 

Assume that $\Phi_{p_i^{n_i}}|A$ for some $i\in\{1,2,\dots,K\}$.
By Proposition \ref{replacement}, the property (T2) for $B$ is equivalent to $A^\flat \oplus B=\ZZ_M$, where $A^\flat$ is the corresponding standard tiling complement. Heuristically, the slab reduction could be thought of as going part of the way in that direction, with the original tile $A$ replaced by a new tile $S$ which keeps some of the structure of $A$ but, additionally, is periodic in the $p_i$ direction. On the other hand, even if we assume {\em a priori} that both $A$ and $B$ satisfy (T2), this does not appear to imply the slab reduction in any obvious formal way. We do not know whether it is always possible to start with the original tiling and reach $A^\flat \oplus B=\ZZ_M$ via a sequence of slab reductions or other similar steps. While it does follow from \cite{LaLo2} that all tilings of odd period $M=p_1^2p_2^2p_3^2$ satisfy the conditions of Theorem \ref{subtile}, this is obtained {\em a posteriori} as a consequence of our classification of all such tilings, with (T2) and the classification results obtained by other means in some cases.

It is worthwhile to describe Szab\'o-type examples in more detail. (For the purpose of this paper, we use a modification of Szab\'o's original construction in \cite{Sz}, which was set in a different abelian group but was nonetheless based on the same idea. See also the examples in \cite{LS} and \cite{dutkay-kraus}.)
We start with the standard tiling $A^\flat \oplus B^\flat=\ZZ_M$, where $M=p_1^2p_2^2p_3^2$, $A^\flat$ is the standard tiling set with $\Phi_{p_i^2}|A$ for all $i\in\{1,2,3\}$, and $B^\flat$ is the standard tiling set with $\Phi_{p_i}|B$ for all $i\in\{1,2,3\}$. 
We then use fiber shifts (Lemma \ref{fibershift}) to modify $A^\flat$ so that for each $i$, one $M$-fiber in $A^\flat$ in the $p_i$ direction is shifted by a distance $M/p_i^2$. For $K=3$, the $M$-fibers in all 3 directions can be selected so that all three shifts can be performed independently without destroying the tiling property.  
This produces a new tiling $A \oplus B^\flat=\ZZ_M$ in which 
neither $A$ nor $B^\flat$ is contained in a proper subgroup of $\ZZ_M$.

Noting that the pair $(A,B^\flat)$ in the above construction has a $(1,2)$-cofibered structure in all three directions, one might ask whether one of $A$ and $B$ must in fact be contained in a proper subgroup if no such obstructions are present. This motivates the following question.

\begin{question}\label{q-obstruction}
Let $A\oplus B=\ZZ_M$ with $M=\prod_{i=1}^K p_i^{n_i}$. Suppose that $\Phi_{p_i^{n_i}}|A$ for some $i\in\{1,2,\dots,K\}$. Is it always true that at least one of the following must hold?

\smallskip
(i) {\em (Subgroup tiling)} $A\subset\ZZ_{M/p_i}$.

\smallskip
(ii) {\em (Obstruction)} There exists an element $x\in \ZZ_M\setminus A$ such that $A_x\subset\ell_i(x)$.
Furthermore, the pair $(A,B)$ has a $(\calp_A,\calp_B)$-cofibered structure of depth at least 2, with $1\in\calp_A$.

\end{question}

It is possible that, at least for $K\geq 4$, more complicated obstructions may occur that cannot be reduced to 1-dimensional saturating spaces. However, the results of \cite{LaLo2} show that the answer is affirmative if $M=p_1^2p_2^2p_3^2$. It seems reasonable to conjecture the following.

\begin{conjecture}\label{conj-reductions}
The answers to Questions \ref{q-slab} (both versions) and \ref{q-obstruction} are affirmative when $M$ has at most 3 distinct prime factors.
\end{conjecture}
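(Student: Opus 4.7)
The plan is to proceed by strong induction on $M$, organized around the two reductions available. The base cases with $K\leq 2$ prime divisors are handled by Sands's factorization theorem \cite{Sands}: one of $A$, $B$ is contained in $p_j\ZZ_M$ for some $j$, which directly gives the subgroup alternative in Question \ref{q-obstruction}, and Corollary \ref{almostsquarefree} together with Theorem \ref{subgroup-reduction} yields the tiling structure required to verify condition (iii) of Theorem \ref{subtile} and thereby Question \ref{q-slab}(i) (which implies (ii)). The substantive case is $K=3$, where $M=p_1^{n_1}p_2^{n_2}p_3^{n_3}$ and Sands's theorem fails due to examples such as Szab\'o's \cite{Sz}.

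Fix $i\in\{1,2,3\}$ with $\Phi_{p_i^{n_i}}|A$. I would first address Question \ref{q-obstruction}, since Question \ref{q-slab}(i) then follows from Theorem \ref{subtile}(iii) after an application of Lemma \ref{fibershift}, and (ii) is implied by (i). Assume that $A\not\subset p_i\ZZ_M$, and consider the saturating sets $A_x$ for $x\in\ZZ_M\setminus A$. The plan is to produce some such $x$ with $A_x\subset\ell_i(x)$ and a contributing divisor $M/p_i^\gamma$ of depth $\gamma\geq 2$, so that Lemma \ref{1dim_sat} supplies the desired $(\calp_A,\calp_B)$-cofibered structure with $1\in\calp_A$. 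The choice of $x$ is dictated by the plane bound (Lemma \ref{planebound}) applied at the top scale $D(M)$: since $\Phi_{p_i^{n_i}}|A$ distributes $A$ uniformly at scale $p_i^{n_i}$ within residue classes mod $p_i^{n_i-1}$, while $A\not\subset p_i\ZZ_M$ forces nontrivial occupation of lines $\ell_i(\cdot)$ across many residues mod $p_i$, one locates a $D(M)$-grid $\Lambda$ and a missing point $x\in\Lambda\setminus A$ for which the only way to saturate $\langle\bbA[x],\bbB[b]\rangle=1$ for $b\in B$ is via $p_i$-direction divisors. Lemma \ref{triangles} (enhanced divisor exclusion) plays here the role it plays in Lemma \ref{fullplanesplitting}, ruling out mixed saturating configurations that would split between the two perpendicular lines $\ell_j(x)$ and $\ell_k(x)$.

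The passage from Question \ref{q-obstruction} to Question \ref{q-slab}(i) goes through Theorem \ref{subtile}. Given the $(\calp_A,\calp_B)$-cofibered structure above, Lemma \ref{fibershift} permits shifting cofibers in $A$ by distances $M/p_i^\beta$ for $\beta\in\calp_B$, yielding a T2-equivalent tiling $A'\oplus B=\ZZ_M$ in which the offending cofiber is aligned with the fibering of $B$. Iterating this process either lands us in the subgroup case (reducing via Theorem \ref{subgroup-reduction} and the inductive hypothesis), or else exhausts all potential violations of condition (iii) of Theorem \ref{subtile}, directly establishing $A'_{p_i}\oplus B=\ZZ_{M/p_i}$ for every translate $A'$ of $A$. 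The inductive hypothesis on the smaller tiling then propagates the conjecture back up to $M$.

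The hardest step, in my view, is the construction of the point $x$ with 1-dimensional saturating set in full generality of exponents $n_1,n_2,n_3$. In \cite{LaLo2} the corresponding analysis is carried out in the equal-exponent case $n_i=2$, where the top scale $D(M)=p_ip_jp_k$ is the only relevant one and the local structure of $A\cap\Lambda$ on a top-scale grid admits a hand classification. For general exponents, the multiscale cuboid apparatus of Section \ref{multiscale-cuboids} must be deployed in concert: one needs to simultaneously track cyclotomic divisors $\Phi_s$ with $p_i^\alpha\parallel s$ across $\alpha=1,\dots,n_i$, and the inductive passage from scale $p_i^\alpha$ to $p_i^{\alpha+1}$ via Lemma \ref{precycreduc} is likely to require more delicate bookkeeping than in the square-exponent case, possibly combined with a separate induction on $\max_i n_i$. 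The restriction $K\leq 3$ is essential precisely because the plane bound of Lemma \ref{planebound}, combined with Lemma \ref{triangles}, collapses the two-dimensional ambiguity of $A_x$ to a single line; for $K\geq 4$ the saturating set has too much room to escape, and new phenomena beyond Szab\'o's examples are expected to intervene.
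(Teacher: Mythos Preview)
The statement you have attempted is Conjecture \ref{conj-reductions}, which is explicitly posed as an \emph{open problem} in Section \ref{conjectures-section} of the paper. There is no proof in the paper to compare against; the authors state the conjecture precisely because they do not know how to prove it. Indeed, the paper remarks that even in the case $M=p_1^2p_2^2p_3^2$ with all primes odd, the conditions of Theorem \ref{subtile} are verified in \cite{LaLo2} only \emph{a posteriori}, as a byproduct of a full classification of such tilings, and not by a direct argument of the type you outline.

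Your proposal is therefore not a proof but a strategy sketch, and you acknowledge as much when you describe the construction of a point $x$ with $A_x\subset\ell_i(x)$ as ``the hardest step'' and note that the multiscale analysis for general exponents $n_1,n_2,n_3$ remains to be carried out. This is exactly the gap: nothing in the paper, or in \cite{LaLo2}, supplies a mechanism for producing such an $x$ in general, and the plane bound plus Lemma \ref{triangles} do not by themselves force $A_x$ into a single line without substantial additional structural input of the kind obtained in \cite{LaLo2} only through case analysis.

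There is also a logical slip in your reduction of Question \ref{q-slab}(i) to Question \ref{q-obstruction}. Fiber shifting via Lemma \ref{fibershift} replaces $A$ by a T2-equivalent set $A'$, but Question \ref{q-slab}(i) asks whether condition (iii) of Theorem \ref{subtile} holds for the \emph{original} pair $(A,B)$. Establishing the slab reduction for a shifted $A'$ does not automatically transfer back to $A$, since condition (iii) concerns the divisor set $\Div(A)$, which is not invariant under fiber shifts. Your iterative scheme would need a separate argument to close this loop.
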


Proposition \ref{replacement} also relates the (T2) property to divisor sets, in the sense that $B$ satisfies (T2) if any only if its divisor set $\Div(B)$ is disjoint from $\Div(A^\flat)$. The equivalence between conditions (ii) and (iii) in Theorem \ref{subtile} establishes a more granular result in this direction, by connecting a smaller family of differences in $A$ to the corresponding family of cyclotomic polynomials. Specifically, if we write $S_i:=\{m:p_i^{n_i}|m|M\}$ for a fixed $i\in\{1,\dots,K\}$, then Theorem \ref{subtile} establishes a fundamental connection between the set of differences $m\in \Div(A)\cap S_i$ and the collection of all cyclotomic polynomials $\Phi_d$ dividing $A$, for $d\in S_i$.
It would be interesting to know whether such relationships exist on the level of individual differences and cyclotomic polynomials. 
As an extreme example of a hypothetical result of this type, we state the following.

\begin{conjecture}\label{conjecture-one-divisor}
If $\Phi_{p_i^{n_i}}|A$, then $M/p_i\notin \Div(B)$. 
\end{conjecture}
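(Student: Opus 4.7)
I would argue by contradiction: suppose $\Phi_{p_i^{n_i}}\mid A$ and, for some $b,b'\in B$, $b'-b=cM/p_i$ with $c$ coprime to $p_i$.

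The first step is the refined divisor-exclusion that Theorem~\ref{ortho-lemma} supplies. For every $a\in A$ one has $\langle\bbA^M[a],\bbB^M[b]\rangle=1$; the $m=M$ summand already contributes $1$, and non-negativity of the remaining summands together with $\bbB^M_{M/p_i}[b]\ge 1$ (witnessed by $b'$) forces $\bbA^M_{M/p_i}[a]=0$ for every $a\in A$. Hence $M/p_i\notin\Div(A)$, uniformly at every point of $A$.

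Next, I would translate the hypothesis into cuboid form using Lemma~\ref{folding-lemma}. With $T=T^M_{p_i^{n_i}}$ the folding template, $\Phi_{p_i^{n_i}}\mid A$ is equivalent to $\bbA^M_M[\Delta*T]=0$ for every cuboid $\Delta(X)=X^y(1-X^{d_i})$ with $(d_i,M)=M/p_i$. Specializing to $\Delta(X)=X^b(1-X^{cM/p_i})$ built from the pair $(b,b')$, this reduces to $|A\cap\Pi(b,p_i^{n_i})|=|A\cap\Pi(b',p_i^{n_i})|$; more generally, all $p_i$ top-scale planes in the slab $\Pi(b,p_i^{n_i-1})$ contain the same number $N$ of points of $A$. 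Combining with the first step, the projections $P_k\subseteq\prod_{j\ne i}\ZZ_{p_j^{n_j}}$ obtained by forgetting the $p_i$-coordinate of $A\cap\Pi(b+kM/p_i,p_i^{n_i})$, $k=0,\ldots,p_i-1$, must be pairwise disjoint (otherwise two $A$-points would realize $(a-a',M)=M/p_i$), yielding the packing bound $p_iN\le M_i$.

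The decisive step is to turn this local rigidity into a global inconsistency with the tiling $A\oplus B=\ZZ_M$. My plan is to apply Theorem~\ref{main-identity} with $C(X)=T(X)$ and $D(X)=X^b+X^{b'}$ (or a variant chosen to symmetrize the $B$-side over the pair $(b,b')$) and evaluate both sides of the resulting identity. The $A$-side collapses, by $\Phi_{p_i^{n_i}}\mid A$, onto the primitive $p_i^{n_i}$-th roots of unity, where it is controlled by the packing bound of the previous step; the $B$-side picks up an extra contribution from the pair $\{b,b'\}$ that should force a strict inequality incompatible with the global count $|A|\,|B|=M$.

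The hard part -- and the reason the authors state this as a conjecture rather than a theorem -- is this matching step. The hypothesis $\Phi_{p_i^{n_i}}\mid A$ controls only the top-level primitive roots of $A$, whereas the Fourier expansion of Theorem~\ref{main-identity} involves every $\Phi_d$ with $d\mid M$. Without auxiliary cyclotomic divisibilities of $A$ at the intermediate scales $\Phi_{M/p_i^\alpha}$ -- precisely the input appearing in Theorem~\ref{subtile}(ii) -- the expansion leaves residual terms that are not obviously controlled by the packing bound alone. Closing this gap seems to require either a descent propagating divisibility from $\Phi_{p_i^{n_i}}$ down to the intermediate scales (a slab-reduction step performed from a single top-scale cyclotomic input), or an entirely new structural input in the spirit of the saturating-set and cofibering technology of Sections~\ref{sec-satsets}--\ref{sec-fibers}. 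I expect this missing ingredient to be the true heart of the conjecture, tied to the broader Conjecture~\ref{conj-reductions}.
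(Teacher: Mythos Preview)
This statement is stated in the paper as a \emph{conjecture}, not a theorem; the paper does not prove it and explicitly says so. You correctly recognise this and do not claim a complete proof.

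Your preliminary steps are valid. The first step (that $M/p_i\in\Div(B)$ forces $\bbA^M_{M/p_i}[a]=0$ for all $a\in A$) is simply Sands's divisor exclusion, Theorem~\ref{thm-sands}, re-derived via the box product. The second step (equal plane counts from $\Phi_{p_i^{n_i}}\mid A$, and then disjointness of the projections $P_k$ giving $p_iN\le M_i$) is also correct and is a genuine, if modest, structural consequence. Your candid assessment of the third step is accurate: applying Theorem~\ref{main-identity} with the templates you suggest leaves uncontrolled contributions at the intermediate scales $\Phi_{M/p_i^\alpha}$, and nothing in the hypotheses bounds them. This is exactly the gap.

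For comparison, the paper's own partial evidence (the paragraph immediately after the conjecture) takes a different tack: instead of a packing bound on $A$, it works on the $B$ side with saturating sets. Observing that if $(b-b',M)=M/p_i$ and $y\notin B$ lies in the slab $\Pi(b,p_i^{n_i-1})$ with $(y-b,M)=M/p_i$, then (\ref{bispan}) forces $B_y\subset\Pi(y,p_i^{n_i})$, so each top-scale plane in that slab meets $B$. This is a weak shadow of the equidistribution $\Phi_{p_i^{n_i}}\mid B$ that the conjecture would imply. Your packing argument and the paper's saturating-set argument probe complementary aspects of the hypothetical contradiction, but neither closes it; the conjecture remains open.
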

This conjecture can be stated purely in terms of differences, since $\Phi_{p_i^{n_i}}|A$ if and only if $M/p_i\in Div(A^\flat)$. It is obviously necessary in order for (T2) to hold, though not sufficient.
Absent a proof of (T2) in its full generality, this might be an interesting direction to explore.

Our saturating set techniques provide partial support for the conjecture, as follows.
Suppose that $M/p_i\in\Div(B)$, with $(b-b',M)=M/p_i$ for some $b,b'\in B$. Suppose further that Conjecture
\ref {conjecture-one-divisor} is true. Then $\Phi_{p_i^{n_i}}\nmid A$, so that $\Phi_{p_i^{n_i}}|B$. In particular, 
$$
|B\cap \Pi(y,p_i^{n_i})|= \frac{1}{p_i} |B\cap \Pi(b,p_i^{n_i-1})|
$$
for every $y\in\ZZ_M$ with $(y-b,M)=M/p_i$. 
We do not know how to prove this, but we can prove the weaker statement that $B\cap \Pi(y,p_i^{n_i})\neq\emptyset$ for each such $y$. Indeed, if $y\in B$, this is obvious. If, on the other hand, $y\not\in B$, then by (\ref{bispan}) we have $B_y\subset \Pi(y,p_i^{n_i})$, and in particular $B\cap \Pi(y,p_i^{n_i})$ is nonempty.

We note that fibering plays a significant role in all our tiling arguments. For instance, if $A$ is $M$-fibered in some direction, this is sufficient to apply the slab reduction (see Remark \ref{fiberingimpliesslab}). At the other extreme, if $\Phi_M|A$ but $A$ fails to be $M$-fibered on some $D(M)$ grid, our strategy in \cite{LaLo2} is to identify and use cofibered structures, which in particular implies fibering in $B$ on a lower scale. Motivated by this, we conjecture the following.

\begin{conjecture}
For every $i$ there exists $1\leq \alpha_i < n_i$ such that either $A$ or $B$ is $M/p_i^{\alpha_i}$-fibered in the $p_i$ direction. In particular, if $M$ has 3 prime factors, $\Phi_M|A$, and there exists a $D(M)$ grid $\Lambda$ such that $A\cap\Lambda$ is not fibered in any direction, then $B$ is fibered in all directions on some scale. (This happens e.g., in Szab\'o's examples.)
\end{conjecture}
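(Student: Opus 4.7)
The plan is to fix a direction $p_i$ with $n_i \ge 2$ and study the partition of $\{1,\ldots,n_i\}$ into $\cala_i = \{\alpha : \Phi_{p_i^\alpha}\mid A\}$ and $\calb_i = \{\alpha : \Phi_{p_i^\alpha}\mid B\}$. By Lemma \ref{fibered-properties}(i), if $A$ is $M/p_i^{\alpha}$-fibered in the $p_i$ direction with $1 \le \alpha \le n_i - 1$, then $\Psi_{M/p_i^{\alpha+1}}\mid A$, and in particular $n_i - \alpha \in \cala_i$. So cyclotomic divisibility identifies the \emph{candidate} fibering scales, and the real content of the conjecture is to upgrade each such polynomial divisibility into an actual set-theoretic decomposition into fibers.

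Without loss of generality, assume $1 \in \calb_i$, so that $\Phi_{p_i}\mid B$ and $B$ is uniformly distributed mod $p_i$; then the candidate scale for $B$ to be fibered is $M/p_i^{n_i - 1}$. I would attempt to partition $B$ into $M/p_i^{n_i-1}$-fibers in the $p_i$ direction grid by grid. If this decomposition succeeds on every $p_i^{n_i-1}$-grid, we are done; otherwise, on a grid where it fails I would use (a suitable generalization of) Proposition \ref{prop-twodirections}, together with Lemma \ref{triangles} and the plane bound (Lemma \ref{planebound}), to extract a point $y$ and an $a\in A$ whose saturating set $B_{y,a}$ is pinned to the single line $\ell_i(y)$. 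Corollary \ref{1dim_sat-cor} then produces a $(\mathcal{P}_A,\mathcal{P}_B)$-cofibered structure with a fiber chain sitting inside $A$. The Fiber-Shifting Lemma (Lemma \ref{fibershift}) would allow me to re-align this cofiber while preserving both the tiling property and T2 status, and I would iterate this, pairing cofibers in $A$ with the $\Phi_{p_i}$-uniformity of $B$, in the hope of either globalizing the fibering of $A$ at some scale or reducing to a tiling in which $B$ is manifestly fibered on the scale $M/p_i^{n_i-1}$.

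The main obstacle will be the propagation step: turning local cofibered structures into a global fibering of either $A$ or $B$. Cofibered structures and fiber shifts are finitary moves, whereas the conjecture asks for a rigid, uniform decomposition of the entire tile. The arguments of \cite{LaLo2} suggest that this can be pushed through for $M = p_1^2 p_2^2 p_3^2$ with serious technical work, but Szab\'o-type examples \cite{Sz} rule out any appeal to Sands-style subgroup containment in general. The ``in particular'' case supplies the needed extra rigidity: having $\Phi_M \mid A$ and a $D(M)$-grid $\Lambda$ on which $A \cap \Lambda$ is not fibered is precisely the hypothesis that feeds Proposition \ref{prop-twodirections} into the saturating-set machinery described above, making that specialization considerably more accessible than the general statement. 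For $K \ge 4$, I expect genuinely new invariants to be needed in order to control the interactions between multiscale cofibered structures in different directions.
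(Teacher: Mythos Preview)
This statement is presented in the paper as an open \emph{conjecture}, in Section~\ref{conjectures-section}; the paper gives no proof, only motivation from the role fibering plays throughout the arguments and from the results of \cite{LaLo2}. So there is no ``paper's proof'' to compare against, and your proposal is, appropriately, a strategy outline rather than a proof. You correctly isolate the main obstruction: upgrading local cofibered structures to a global fibering of one of the sets.

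Two concrete issues with the sketch are worth flagging. First, you invoke Proposition~\ref{prop-twodirections} for the ``in particular'' clause, but that proposition assumes $A\cap\Lambda$ \emph{is} a nonempty union of disjoint $M$-fibers and concludes that only two directions can occur; it says nothing when $A\cap\Lambda$ is not fibered at all, which is precisely the hypothesis in the conjecture's special case. You would need a different entry point to the saturating-set machinery there (something closer to the full-plane or corner analyses in \cite{LaLo2}). Second, the Fiber-Shifting Lemma produces a T2-equivalent tiling $A'\oplus B=\ZZ_M$ with $A'\neq A$; it does not establish that the \emph{original} set $A$ is fibered on any scale. Since the conjecture is a structural statement about $A$ and $B$ themselves, fiber shifts can at best be used indirectly (e.g., to derive a contradiction), not to manufacture the desired fibering.
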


\subsection{Saturating sets}

We have seen in Lemma \ref{1dim_sat} and Corollary \ref{1dim_sat-cor} that if $A_x\subset\ell_i(x)$ for some $i\in\{1,\dots,K\}$ and $x\in \ZZ\setminus A$, this implies a cofibered structure in $(A,B)$. By Lemma \ref{fibershift}, this allows us to shift $M$-fibers in $A$ in the given direction. We use this in \cite{LaLo2} to reduce $A\oplus B=\ZZ_M$ to T2-equivalent tilings $A'\oplus B=\ZZ_M$, where $A'$ has additional regularity properties. It would therefore be interesting to either find a structure theorem (an analogue of Lemma \ref{1dim_sat}) for saturating sets contained in higher-dimensional subspaces, or, alternatively, to find a systematic way of adding geometric constraints on saturating sets until we find a cofibered structure.

In all examples where we have been able to determine saturating sets, we found that they enjoy pleasant ``splitting" properties. For example, suppose that $(x-a,M)=M/p_i$ for some $a\in A$ and $i\in\{1,\dots,K\}$. By (\ref{bispan}), we have
\begin{equation}\label{split-e1}
A_x\subset \Bispan(x,a)=\Pi(x,p_i^{n_i}) \cup\Pi(a,p_i^{n_i}).
\end{equation}
However, what actually tends to happen is that either $A_x\subset \Pi(x,p_i^{n_i})$ or $A_x\subset \Pi(a,p_i^{n_i})$. For instance, if $\bbA_{M/p_i}[a]>0$, then $\bbA_{M/p_i}[x]\geq2$ and  
$$
A_x\subset\Pi(x,p_i^{n_i}).
$$
If, however, $\bbB_{M/p_i}[b]>0$, then 
$$
A_{x,b}\subset\Pi(a,p_i^{n_i}).
$$ 
For an example of a less obvious situation where this happens, see e.g. \cite[Lemma 9.18]{LaLo2}.

Similarly, suppose that $K=3$ and let $x\in\ZZ_M$. Assume that there are two elements $a_i,a_j\in A$ such that $(x-a_i,M)=M/p_i$ and $(x-a_j,M)=M/p_j$, with $i,j\in\{1,2,3\}$ distinct.
Then, by Lemma \ref{bispan} again,
$$
A_x\subset \Bispan(x,a_i)\cap\Bispan(x,a_j)=\ell_k(x)\cup\ell_k(a_i)\cup\ell_k(a_j)\cup\ell_k(x_{ij}).
$$
where $x_{ij}\in\ZZ_M$ is the unique point such that $(x_{ij}-a_i,M)=M/p_j$ and $(x_{ij}-a_j,M)=M/p_i$. However, in all tiling examples that we have worked out, $A_x$ is in fact contained in just one of the above lines. An example of this type of situation is provided by \cite[Lemma 4.6]{LaLo2}. See also Lemma \ref{fullplanesplitting} in this paper for a different example where the initial geometric constraints  restrict the saturating set to a union of two lines (\ref{twotails}), but then additional arguments show that only one of these lines may participate.

Returning to the ``two planes" situation as in (\ref{split-e1}), we can in fact say a little bit more. 
By (\ref{split-e1}), we have for any $b\in B$,
$$
1=\sum_{p_i^{n_i}|m}\frac{1}{\phi(M/m)}(\bbA_m[x]\bbB_m[b]+\bbA_{m/p_i}[x]\bbB_{m/p_i}[b]).
$$
Suppose that $\bbB_m[b]$ and $\bbB_{m/p_i}[b]$ are both nonzero for some $m$ with $p_i^{n_i}|m|M$.
If there were an $a'\in A$ with $(x-a',M)\in\{m,m/p_i\}$, then we would also have $(a-a',M)\in\{m,m/p_i\}$, contradicting divisor exclusion. Moreover, if $\bbB_{m/p_i}[b]\neq 0$, then any $a'\in A$ with $(x-a',M)\in\{m,m/p_i\}$ must lie in the plane 
$\Pi(a,p_i^{n_i})$. Hence
\begin{equation}\label{splitplanesconj}
1=\sum_{p_i^{n_i}|m}\frac{1}{\phi(M/m)}(\delta_m\bbA_m[x]\bbB_m[b]
+(1-\delta_m)\bbA_{m}[a]\bbB_{m/p_i}[b])	
\end{equation}
where $\delta_m\in \{0,1\}$ for all $p_i^{n_i}|m$.

It appears reasonable to conjecture the following. 

\begin{conjecture}
Let $A\oplus B=\ZZ_M$ be a tiling, and assume that $(x-a,M)=M/p_i$ for some $a\in A$, $x\in\ZZ_M\setminus A$, and $i\in\{1,\dots,K\}$. Then either $A_x\subset \Pi(x,p_i^{n_i})$ or $A_x\subset \Pi(a,p_i^{n_i})$. Furthermore,
either $\delta_m=1$ for all $m$, or $\delta_m=0$ for all $m$, depending only on the choice of $a\in A$ and $b\in B$.
\end{conjecture}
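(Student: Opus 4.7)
The plan is to reduce both clauses of the conjecture to a per-$b$ assertion and then to tackle global consistency across $b$. Observe that (\ref{bispan}) already gives $A_x \subset \Pi(x, p_i^{n_i}) \cup \Pi(a, p_i^{n_i})$, and the two planes are disjoint because $\pi_i(x) \neq \pi_i(a)$. For $a' \in A_x$, the divisor $(x-a', M)$ has $p_i$-part exactly $p_i^{n_i}$ when $a' \in \Pi(x, p_i^{n_i})$ and exactly $p_i^{n_i-1}$ when $a' \in \Pi(a, p_i^{n_i})$; in the latter case one computes $(a-a', M) = p_i \cdot (x-a', M)$, which is exactly the $\bbA_m[a] \bbB_{m/p_i}[b]$ term of (\ref{splitplanesconj}). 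Thus the ``furthermore'' clause is equivalent to the per-$b$ statement that $A_{x,b}$ lies in exactly one of the two planes, and the first clause says in addition that the plane choice is the same for all $b \in B$.

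For the per-$b$ statement, I would fix $b \in B$ and suppose, for contradiction, the existence of $a_1 \in A_{x,b} \cap \Pi(x, p_i^{n_i})$ and $a_2 \in A_{x,b} \cap \Pi(a, p_i^{n_i})$, along with $b_1, b_2 \in B$ satisfying $(b - b_j, M) = (x - a_j, M) =: m_j$. The principal tool is enhanced divisor exclusion (Lemma \ref{triangles}) applied to $(x, b)$ with divisors $m_1, m_2$: the $p_i$-coordinate is already fine ($\alpha_i^{(m_1)} = n_i$ vs. $\alpha_i^{(m_2)} = n_i - 1$), so the argument goes through whenever, for every $j \neq i$, either $\alpha_j^{(m_1)} \neq \alpha_j^{(m_2)}$ or both equal $n_j$. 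When some $j \neq i$ violates this ``shared-coordinate'' condition, I would either refine the choice of $(a_1, a_2)$ inside $A_{x,b}$ to maximize the number of coordinates where $m_1, m_2$ disagree, or fold away the offending coordinate by passing to an intermediate scale $N \mid M$ using the multiscale cuboid framework of Section \ref{multiscale-cuboids}, so that the reduced divisors satisfy the hypotheses of Lemma \ref{triangles}.

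For the first clause, assuming the per-$b$ result, partition $B = B_x \sqcup B_a$ according to the plane chosen by $A_{x,b}$ and attempt to show that one of the two sides is empty. A natural tactic: pick $b' \in B_x$ and $b'' \in B_a$, read off the witnesses $m, m'/p_i \in \Div(B)$ with $p_i^{n_i} \mid m$ and $p_i^{n_i-1} \parallel m'/p_i$, combine them with $m/p_i, m' \in \Div(A)$ from the earlier computation, and try to produce either a direct divisor-exclusion contradiction or a forbidden configuration identified by the saturating-set techniques of Section \ref{sec-satsets} (in the spirit of Proposition \ref{prop-twodirections} and Lemma \ref{fullplanesplitting}). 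The main obstacle is the shared-coordinate case in the per-$b$ argument: Lemma \ref{triangles} alone does not cover it, and the multiscale fold-away sketched above is delicate; a fully general resolution may well need new input, likely a classification of the small-scale local structures that can ``couple'' the two planes without violating the tiling property, after which the consistency-across-$b$ step should be reachable by a combinatorial argument paralleling the cofibered-structure analysis of Section \ref{sec-fibers}.
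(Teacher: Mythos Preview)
The statement you are attempting to prove is not a theorem in the paper: it is stated there as an open \emph{conjecture} (in the ``Conjectures and open questions'' section), and the paper offers no proof. The surrounding discussion only derives the preliminary facts you cite --- the containment $A_x\subset\Pi(x,p_i^{n_i})\cup\Pi(a,p_i^{n_i})$ from (\ref{bispan}), and the identity (\ref{splitplanesconj}) defining the $\delta_m$ --- and then records the conjecture as what ``appears reasonable'' based on worked examples. So there is no paper proof to compare your proposal against.

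Your proposal is a reasonable strategy sketch, and your reading of the two clauses (per-$b$ constancy of the $\delta_m$, then global consistency across $b$) is correct. But you have correctly located the genuine gap yourself: Lemma~\ref{triangles} requires, for every index $j$, that the two divisors either differ in the $p_j$-exponent or both have exponent $n_j$. When $m_1$ and $m_2$ share a $p_j$-exponent strictly below $n_j$ for some $j\neq i$, the lemma gives nothing, and your ``fold away the offending coordinate'' idea via multiscale cuboids is not fleshed out --- passing to a smaller scale $N$ changes what $\bbA^N_m$ and $\bbB^N_m$ count, and it is not clear that the contradiction survives the reduction. Likewise, the global step (ruling out $B_x,B_a$ both nonempty) is only described at the level of intent. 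In short, what you have is an outline with an honestly flagged obstruction, which is consistent with the paper's own assessment that this is open.
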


A similar reasoning, with only slightly more effort, applies to $A_{x,y}$ and $B_{y,x}$ as in 
(\ref{sat-layers}) with $x\in \ZZ_M\setminus A$ and $y\in \ZZ_M\setminus B$.


\subsection{Subspace bounds}

The special case of Lemma \ref{planebound} with $K=3$ and $\alpha_i=0$ is a simple but very effective tool in \cite{LaLo2}. It would be useful to have similar bounds for lower-dimensional subspaces, for example lines in the 3-prime case. In this regard, we formulate the following modest conjecture.

\begin{conjecture}\label{linebndconj}
Suppose that $p_i^{\alpha_i}\parallel |A|$ with $\alpha_i<n_i$. Then for all $x\in\ZZ_{M}$ 
\begin{equation}
|A\cap\ell_i(x)|<p_i^{n_i}.
\end{equation}
\end{conjecture}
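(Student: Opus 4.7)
The plan is to argue by contradiction, so suppose some $x\in\ZZ_M$ satisfies $|A\cap\ell_i(x)|=p_i^{n_i}$. After translating $A$ I may assume $x=0$, so $\ell_i(0)=M_i\ZZ_M\subset A$. The key first observation is that $\ell_i(0)\subset A$ severely restricts $B$ via divisor exclusion: since $(kM_i,M)=M_ip_i^{v_{p_i}(k)}$ for $k\in\{1,\ldots,p_i^{n_i}-1\}$, the set $\{M_ip_i^v:0\leq v<n_i\}$ lies in $\Div(A)$, and Theorem \ref{thm-sands} excludes it from $\Div(B)\setminus\{M\}$. In particular $M_i\notin\Div(B)$, so each $\ell_i$-line contains at most one element of $B$, giving $|B|\leq M_i$.

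Next I would project to the quotient $\pi\colon\ZZ_M\to\ZZ_M/\ell_i(0)\simeq\ZZ_{M_i}$. Writing $E=A\setminus\ell_i(0)$ and $D=\pi(E)$, the image $\pi(A)=p_i^{n_i}\one_{\{0\}}+D$ is a non-negative integer multiset in $\ZZ_{M_i}$ with $D(0)=0$ and $D(u)=|A\cap\ell_i(u)|$ for $u\neq 0$, while $\pi(B)=B_1\subset\ZZ_{M_i}$ is a genuine set of size $|B|$. Summing the tiling identity over each fiber of $\pi$ yields, in $\ZZ_{M_i}$,
\begin{equation*}
\pi(A)*B_1=p_i^{n_i}\one_{\ZZ_{M_i}},\qquad\text{equivalently}\qquad D*B_1=p_i^{n_i}(\one_{\ZZ_{M_i}}-B_1).
\end{equation*}

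Evaluating this at any primitive $d$-th root of unity $\zeta_d$ with $d\mid M_i$ and $d>1$ gives $\pi(A)(\zeta_d)\,B_1(\zeta_d)=0$, so either $B_1(\zeta_d)=0$ or $D(\zeta_d)=-p_i^{n_i}$. When $|A|<2p_i^{n_i}$ one has $|D(\zeta_d)|\leq|D|=|E|<p_i^{n_i}$, so the second alternative is impossible; hence $\Phi_d\mid B_1$ for every $d\mid M_i$ with $d>1$, i.e., $\prod_{d\mid M_i,\,d>1}\Phi_d(X)=\frac{X^{M_i}-1}{X-1}$ divides $B_1(X)$. Since $B_1$ is $0$-$1$ valued of degree less than $M_i$, this forces $B_1=\one_{\ZZ_{M_i}}$, so $|B|=M_i$ and $|A|=p_i^{n_i}$; then $p_i^{\alpha_i}\parallel p_i^{n_i}$ yields $\alpha_i=n_i$, contradicting the hypothesis $\alpha_i<n_i$.

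The remaining case $|A|\geq 2p_i^{n_i}$ is the main obstacle, as $D(\zeta_d)=-p_i^{n_i}$ is no longer precluded a priori. To close it I would combine two ingredients. First, the arithmetic constraint $p_i^{n_i-\alpha_i}\mid|B_1|$ (from $|A||B|=M$ and $p_i^{\alpha_i}\parallel|A|$), coupled with $\gcd(p_i,M_i)=1$, tightly restricts the possible cyclotomic factorization of $B_1$ in $\ZZ_{M_i}$. Second, an iterative use of divisor exclusion applied to the cross-differences $(a-e,M)$ with $a\in\ell_i(0)$ and $e\in E$, which shows that the set $\{(u,M_i):u\in\supp(D)\}$ is disjoint from $\Div_{M_i}(B_1)\setminus\{M_i\}$; in particular each additional full line $\ell_i(u_0)\subset A$ contributes an extra exclusion $(u_0,M_i)\notin\Div_{M_i}(B_1)$. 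My goal would be to leverage these constraints to force every $D(u)$ to take values only in $\{0,p_i^{n_i}\}$, so that $A$ is a disjoint union of full $\ell_i$-lines; then $p_i^{n_i}\mid|A|$ gives $\alpha_i=n_i$, contradiction. The hardest step will be ruling out partial-line configurations $0<D(u)<p_i^{n_i}$ in the presence of a full line, which is precisely the obstruction the Fourier argument of the previous paragraph cannot address, and likely requires either a higher-order cuboid identity adapted to the mixed scale $M_i$ versus $p_i^{n_i}$, or a Sands-type factorization theorem applied to $B_1$ inside $\ZZ_{M_i}$.
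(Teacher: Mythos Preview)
This statement is Conjecture~\ref{linebndconj} in the paper, presented explicitly as an open problem in the ``Conjectures and open questions'' section. The paper does not prove it; it only records two easy special cases in which it is clear (when $p_i^{n_i}$ exceeds the plane bound in some direction $j\neq i$, and when $A$ satisfies (T2)). There is therefore no paper proof to compare against.

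Your argument for the case $|A|<2p_i^{n_i}$ is correct and is a genuine addition: it is a third special case, distinct from the two the paper mentions. The projection to $\ZZ_{M_i}$, the identity $\pi(A)*B_1=p_i^{n_i}\one_{\ZZ_{M_i}}$, and the size bound $|D(\zeta_d)|\leq |E|<p_i^{n_i}$ forcing $\Phi_d\mid B_1$ for all $1<d\mid M_i$ are all sound, and the conclusion $B_1=\ZZ_{M_i}$, hence $|A|=p_i^{n_i}$ and $\alpha_i=n_i$, follows.

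The case $|A|\geq 2p_i^{n_i}$, however, is not proved, as you yourself acknowledge. The outline you give --- forcing $D(u)\in\{0,p_i^{n_i}\}$ from a combination of divisor exclusion and the arithmetic constraint $p_i^{n_i-\alpha_i}\mid |B_1|$ --- is a strategy, not an argument, and the specific obstruction you name (ruling out partial lines $0<D(u)<p_i^{n_i}$ once one full line is present) is exactly the difficulty. Neither the cuboid machinery nor Sands-type factorization in $\ZZ_{M_i}$ is known to close this gap. So what you have is a nice partial result on an open conjecture, not a proof of the conjecture.
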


Conjecture \ref{linebndconj} is clearly true when:
\begin{itemize}
\item $p_i^{n_i}$ exceeds the plane bounds $|A\cap\Pi(x,p_j^{n_j})|$ for $j\neq i$, 
\item $A$ satisfies (T2) (in this case, $A\oplus B^\flat=\ZZ_M$ and there exists $\beta_i$ so that $M/p_i^{\beta_i}\notin \Div(A)$).
\end{itemize}


\section{Acknowledgement}
Both authors were supported by NSERC Discovery Grants.




\bibliographystyle{amsplain}

\bigskip

\noindent{{\sc Laba:} Department of Mathematics, UBC, Vancouver,
B.C. V6T 1Z2, Canada}

\noindent{\it ilaba@math.ubc.ca}

\smallskip

\noindent{{\sc Londner:} Department of Mathematics, UBC, Vancouver,
B.C. V6T 1Z2, Canada. 
\\
{\it Current address:} Department of Mathematics, Faculty of Mathematics and Computer Science, Weizmann Institute of Science, Rehovot 7610001, Israel}

\noindent{\it itay.londner@weizmann.ac.il}

\end{document}